\documentclass[UTF8,a4paper,10pt]{article}

\usepackage{fancyhdr}
\usepackage{multicol} 
\usepackage{lastpage} 
\usepackage{geometry} 
\usepackage[subfigure,AllowH]{graphfig} 
\usepackage{amsmath}
\usepackage{dsfont}
\usepackage{mathtools}
\usepackage{framed}
\usepackage{amsfonts}
\usepackage{amssymb}
\usepackage{amsthm} 
\usepackage{theoremref}

\fancypagestyle{plain}{
	\fancyhf{}
	\rhead{Page \thepage\ of \pageref{LastPage}}
	\lfoot{}
	\cfoot{}
	\rfoot{}}
\newtheorem{theorem}{Theorem}[section]

\newtheorem{corollary}{Corollary}[section]
\newtheorem{proposition}{Proposition}[section]
\newtheorem{lemma}{Lemma}[section]
\newtheorem{definition}{Definition}[section]

\newtheorem{remark}{Remark}[section]

\newcommand{\NN}{\mathbb{N}} 
\newcommand{\RR}{\mathbb{R}} 

\newcommand{\Bcal}{\mathcal{B}}

\newcommand{\Lcal}{\mathcal{L}}

\newcommand{\ep}{\epsilon}

\makeatletter
\newcommand{\tpitchfork}{%
	\vbox{
		\baselineskip\z@skip
		\lineskip-.52ex
		\lineskiplimit\maxdimen
		\m@th
		\ialign{##\crcr\hidewidth\smash{$-$}\hidewidth\crcr$\pitchfork$\crcr}
	}%
}
\makeatother

\title{\textbf{\huge{Spreading speeds for Fisher-KPP equations with slowly decaying initial data in an almost periodic setting}}}
\author{\ Xing Liang, Linfeng Xu, Tao Zhou }
\date{} 

\begin{document}
	\newcommand{\supercite}[1]{\textsuperscript{\cite{#1}}}
	\maketitle
	
	\setlength{\oddsidemargin}{ 1cm} 
	\setlength{\evensidemargin}{\oddsidemargin}
	\setlength{\textwidth}{13.50cm}
	\vspace{-0.2cm}
	\begin{center} 
		\parbox{\textwidth}{ 
			{\textbf{Abstract}} \quad {This paper investigates the long-time behavior of the Fisher-KPP equation with slowly decaying initial data in an almost periodic medium. 
	  We mainly focus on two classes of initial data:  exponentially decaying initial data and initial data that decay more slowly than any	 exponential function.
	Employing the Hamilton–Jacobi approach, we provide a unified framework for analyzing the Cauchy problems with initial data in both cases.
			We demonstrate that the level sets of the solution can be estimated by the generalized principal eigenvalue of the linearized operator and the decay rate of the initial data.}

			{\textbf{Key words}} \quad {reaction-diffusion equations, almost periodic media, spreading speeds}} 
	\end{center}
	
	\vspace{0.5cm}
	%
	%
	%
	%
	%
	%

	\setlength{\oddsidemargin}{-.5cm} 
	\setlength{\evensidemargin}{\oddsidemargin}
	\setlength{\textwidth}{17.00cm}
	
	\section{Introduction and main results}
	
	In this paper, we study the spreading speeds associated with the following equation:
	\begin{equation}\label{eq:main eq}
			\begin{cases}
	u_{t}=\partial_x(a(x)\partial_xu)+b(x)\partial_xu+f(x,u), \ &t>0,\ x\in\mathbb R,\\
	u(0,x)=u_0(x)\in[0,1], \  &x\in\mathbb R.\\
	\end{cases}
	\end{equation}
	We impose the following assumptions throughout the paper:\\
	(\textbf{A1}) The coefficients $a\in C^{1,\alpha}(\RR) \text{ and } b\in C^{\alpha}(\RR)$ are uniformly H\"{o}lder continuous with exponent  $\alpha\in(0,1)$. Moreover, $a \text{ and }b$ are almost periodic with $\inf\limits_{x}a(x)>0.$\\
     The reaction term $f$  is assumed to satisfy the      following conditions:\\
     (\textbf{F1})  $f$ is of class $C^1$ in $s$,  $\partial_sf(\cdot,0)\in C^{\alpha}(\mathbb R)\text{ is almost periodic with }\inf\limits_{x}\partial_sf(x,0)>0.$\\
     (\textbf{F2}) $f$ is assumed to be of the Fisher–KPP type, that is
    \begin{equation}
    	f(x,0)=f(x,1)=0,\ 0<f(x,s)\leq\partial_sf(x,0)s \text{ for any }  s\in(0,1).
    \end{equation}
    (\textbf{F3}) there exist $\beta>0, s_0\in(0,1)$ and $C\geq0$ such that 
    \begin{equation}\label{eq:f is c alpha}
    	f(x,s)\geq\partial_sf(x,0)s-Cs^{1+\beta} \text{ for any }  s\in(0,s_0).
    \end{equation}
    For example, a concave function $f(x,\cdot)\in C^{1,\beta}([0,1])$ uniformly in $x$, which is positive 	in $(0, 1)$ and vanishes at $0$ and $1$, satisfies the conditions above.

	Equation \eqref{eq:main eq}  represents a heterogeneous generalization of the classical homogeneous equation:
	$$	u_{t}=u_{xx}+f(u),$$
	where $f(0)=f(1)=0$ and $f(s)>0$ for $s\in(0,1)$, which was  studied by Fisher \cite{F1}, and  Kolmogorov, Petrovsky, and Piskunov  \cite{K1}.	
  A central issue is to investigate the evolution  of the level sets of solutions to equation \eqref{eq:main eq} under different types of initial data. Specifically, for any $\theta\in(0,1)$, we want to estimate	the following two quantities for sufficiently large  $t$:
  \begin{equation*}
  x_\theta^+(t)=\max E_\theta(t) \text{ and } x_\theta^-(t)=\min E_\theta(t),
   \end{equation*}
  where $E_\theta(t):=\{x| u(t,x)=\theta\}$ is the so-called  level set  of $u$ of value $\theta$ at time $t$.
  Consider the solution of equation \eqref{eq:main eq}  with compactly supported initial data. Existing results show that, under appropriate conditions, the limit 
  $\lim\limits_{t\to+\infty}\frac{x_{\theta}^{\pm}(t)}{t}$ exists and is independent of $\theta$, see, e.g., \cite{BHN,BHN2,berestycki2008Asymptotic,berestycki2012spreading,berestycki2019asymptotic,evans-souganidis2,GF,Huang-Shen09,LiangZhao07CPAM,Shen10TAMS,W} and references therein. 
 The existence and characterization of the spreading speed have already been proved. 
 As the present paper is concerned with propagation under initial data exhibiting different decay rates, we introduce the following definition of the spreading speed associated with initial data:  
	\begin{definition}
		Let $\mathcal U$ be a set of functions defined on $\mathbb R$ and $u$ be a solution to \eqref{eq:main eq} with initial datum $u_0\in\mathcal U$. 
		If there exist two constants $\omega^{+}\in\mathbb R\cup\{+\infty\}$ and $\omega^{-}\in\mathbb R\cup\{-\infty\}$, which are independent of $u_0$, such that
     $\omega^-+\omega^+>0$, and
     \begin{equation}\label{eq:def of ss}
     	\begin{cases}
     		\displaystyle{\lim_{t\rightarrow+\infty}}\sup \limits_{x\in (-\infty, (-\omega^--\epsilon) t]\cup[ (\omega^++\epsilon) t,+\infty) }|u(t,x)|=0\text{ for every sufficiently small }\epsilon>0,\\
     		\displaystyle{\lim_{t\rightarrow\infty}}\sup \limits_{x\in [(-\omega^-+\epsilon)t, (\omega^+-\epsilon) t]}|u(t,x)-1|=0\text{  for every sufficiently small }\epsilon>0.\\
     	\end{cases}
     \end{equation}
	then  $\omega^{\pm}$ are called the spreading speeds of \eqref{eq:main eq} with respect to $\mathcal U$ in the positive and negative directions, respectively.
	
	Here we agree that if $\omega^+ = +\infty$, then the interval $[(\omega^+ + \epsilon)t, +\infty)$ in \eqref{eq:def of ss} is understood as the empty set, and the interval $[(-\omega^- + \epsilon)t, (\omega^+ - \epsilon)t]$  is replaced by $[(-\omega^- + \epsilon)t, +\infty)$. 
	Similarly, if $\omega^- = -\infty$, then the interval $(-\infty, (-\omega^- - \epsilon)t]$ is understood as the empty set, and the interval $[(-\omega^- + \epsilon)t, (\omega^+ - \epsilon)t]$ is replaced by $(-\infty, (\omega^+ - \epsilon)t]$.
	\end{definition}
	When the coefficients are almost periodic functions, Berestycki and Nadin \cite{berestycki2012spreading,berestycki2019asymptotic} proved that the spreading speeds of  \eqref{eq:main eq} with respect to $\mathcal U=\{\phi(x)\in[0,1] \text{ with compact support}\}$ exist.
   The characterization of the spreading speeds are closely related to the generalized principal eigenvalues  of the linearized operator, which are defined as follows. Let
   $$\mathcal L\phi(x):=(a(x)\phi^{\prime}(x))^{\prime}+b(x)\phi^{\prime}(x)+c(x)\phi(x),$$ and, $\text{ for }p\in\RR$,
   \begin{equation*}\label{eq:p}
   	\begin{split}
   		&\quad L_{p}\phi(x)=e^{-px}\mathcal L(e^{p\cdot}\phi)(x)\\
   		&=(a(x)\phi^{\prime}(x))^\prime+(b(x)+2pa(x))\phi^{\prime}(x)+\big(p^2a(x)+p(b(x)+a^\prime(x))+c(x)\big)\phi(x).\\
   	\end{split}
   \end{equation*}
	\begin{definition}(\cite[Definition 2.1]{berestycki2012spreading})\label{def:gpe}
		The generalized principal eigenvalues associated with operator \(L_p\) are:
		\[
		\underline{\lambda}(p) := \sup \{\lambda \mid \exists \phi \in \mathcal{A}_{-\infty} \text{ such that } L_p\phi \geq \lambda\phi \text{ on } \RR\}, 
		\]
		\[
		\overline{\lambda}(p) := \inf \{\lambda \mid \exists \phi \in \mathcal{A}_{-\infty} \text{ such that } L_p\phi \leq \lambda\phi \text{ on } \RR\}, 
		\]
		where \(\mathcal{A}_{-\infty}\) is the set of admissible test-functions over \(\mathbb{R}\):
		\[
		\mathcal{A}_{-\infty} := \left\{ \phi \in C^2(\mathbb{R}),\ 
		\phi' / \phi \in L^\infty(\mathbb{R}),\ 
		\phi > 0 \text{ in } \mathbb{R},\ 
		\lim_{|x| \to +\infty} \frac{ \ln \phi(x)}{x} = 0 \right\}.
		\]
	\end{definition}
	
	Generalized principal eigenvalues play an important role in the study of the propagation speed for problems with exponentially decaying initial data, which will be illustrated later.
	We now recall the relevant definition and 	basic properties.
	It was proved by Berestycki and Nadin \cite{berestycki2012spreading} that $\overline{\lambda}(p)\geq\underline{\lambda}(p) \text{ for any }p\in\RR$
	if $a, b$ and $c$ are only assumed to be uniformly continuous and bounded in $x$. Moreover, they showed that if $a, b$ and $c$ are almost periodic functions, then
	$$\overline{\lambda}(p)=\underline{\lambda}(p) \text{ for any } p\in\RR.$$
	In fact, they  showed that, for any $\kappa>0$ and $p\in\mathbb R$, there exists a unique solution $u_{\kappa,p}\in C^2(\mathbb R)$ to
	\begin{equation}\label{eq:key eq}
		L_pe^{u_{\kappa,p}}(x)=\kappa u_{\kappa,p}(x)e^{u_{\kappa,p}(x)}
	\end{equation}
	and the limit
	\begin{equation}\label{eq:conv of test-func}
		\lim\limits_{\kappa\to0}\kappa u_{\kappa,p}(x) \text{ exists in } L^{\infty}(\mathbb R)
		\end{equation}
	with
	\begin{equation}\label{eq:bdd of test-func}
		-\|ap^2+(b+a^\prime)p+c\|_{L^\infty(\mathbb R)}\leq\kappa u_{\kappa,p}(x)\leq\|ap^2+(b+a^\prime)p+c\|_{L^\infty(\mathbb R)}.
	\end{equation}
	Taking $e^{u_{\kappa,p}(x)}$ as a test function in $\mathcal A_{-\infty}$, one thus deduces that $\overline{\lambda}(p)\leq	\lim\limits_{\kappa\to0}\kappa u_{\kappa,p}(x)\leq\underline{\lambda}(p)$ by the definition of $\overline{\lambda}(p)$ and $\underline{\lambda}(p)$.  
	Therefore,  $\overline{\lambda}(p)=\underline{\lambda}(p)=\lim\limits_{\kappa\to0}\kappa u_{\kappa,p}(x)$ and we thus denote it by $\lambda(p)$.
	It was known that the function $\lambda(p)$ is convex, and
	$\begin{aligned}
		\lambda(p) =O(p^2)
	\end{aligned}$
	as $p\to\infty$ (\cite{berestycki2012spreading,berestycki2019asymptotic}). 
	Then there exists $p_{\pm} > 0$, such that
	\begin{center}
		$\begin{aligned}
			\inf\limits_{p > 0}\frac{\lambda(-p)}{p} 
			= \frac{\lambda(-p_+)}{p_+}
			\text{ and }\inf\limits_{p > 0}\frac{\lambda(p)}{p} 
			= \frac{\lambda(p_-)}{p_-}.
		\end{aligned}$
	\end{center}
	Furthermore, the spreading speeds of \eqref{eq:main eq} with respect to compactly supported initial data   are $\frac{\lambda(-p_+)}{p_+}$ and $\frac{\lambda(p_-)}{p_-}$ (\cite{berestycki2012spreading,liang2021propagation}).
	
	We next specify the class of initial data 	considered in this paper.
	Assume  that $h$ is defined on $I_{R_0}=\{x | |x|\geq R_0\}$ for some $R_0\geq0$ and satisfies 
		\begin{equation}\label{eq:ass of h}
	h\in C^2(I_{R_0})	\text { and } \text{sign}(h^\prime(x))=\text{sign}(x)\ \ \text{for any } |x|\ge R_0,
	\end{equation} 
     and
	\begin{equation}\label{eq:intial data}
	\lim\limits_{x\to\pm\infty}h(x)=+\infty,\ \lim\limits_{x\to\pm\infty}h^\prime(x)=p^{\pm}_0\in[0,+\infty),
	\text{ and }\lim\limits_{x\to\pm\infty}h^{\prime\prime}(x)=0.
	\end{equation}
     Then we define
	$$\mathcal U_{p^-_0,p^+_0}:=\{u_0:\mathbb R\to[0,1] \mid h:=-\ln u_0 \text{ on } I_{R_0}, \text{ and $h$ satisfying } \eqref{eq:ass of h} \text{ and }\eqref{eq:intial data}\}.$$
	If the initial datum decays exponentially, that is, $p^{\pm}_0>0$ in \eqref{eq:ass of h},  then the spreading speeds depend also on the decay rate, see, e.g., \cite{bramson78CPAM,bramson83MAMS,Lau85JDE,Mallordy95SIAM,Ninomiya-Yanagida2019JDE,Uchiyama1978} and the references therein.
	For example, let $a(x)=c(x)=1$ and $b(x)=0$ in \eqref{eq:main eq}, and assume that $u(0,x)$ is equivalent as $x \to +\infty $ to a multiple of $ e^{-p x} $ with $ 0 < p < p^* = 1$, then \( u(t,x) \) converges to a finite shift of \( \varphi_\omega(x - \omega t) \) as \( t \to +\infty \), where  $\phi_\omega$ is the so-called traveling front with speed $\omega=\frac{p^2+1}{p}$ satisfying
	$$\phi^{\prime\prime}_\omega+\omega\phi^\prime_\omega+\phi_\omega(1-\phi_\omega)=0 \text{ in }\RR, \phi_\omega(-\infty)=1, \phi_\omega(+\infty)=0,$$
	see, e.g., \cite{Mallordy95SIAM,Uchiyama1978}. Therefore, the spreading speed $\omega^+=\frac{p^2+1}{p}$. 
	
	If the initial data decay more slowly than any exponential function,  that is, $p^{\pm}_0=0$ in \eqref{eq:intial data} (hereafter, we shall refer to such initial data as sub-exponentially decaying initial data), then it is easy to see that the spreading speed is infinity by constructing subsolutions whose initial data decay like $ e^{-p x}$ for any $p>0$ small. 
	It is therefore natural to investigate the level set \(E_\theta(t)\) in the case \(p^+_0 = 0\).
    It was proved by Hamel and Roques \cite{hamel2010fast} that, for front-like initial data, the level set 
    $$E_\theta(t)\subset u_0^{-1}\{[ e^{-(c-\epsilon)t}, e^{-(c+\epsilon)t}]\}$$ when $a=1$, $b=0$ and $c$ is constant. 
    See also Henderson’s work \cite{Henderson2016nonliearity} on the periodic case.

	The goal of this paper is to establish 	spreading properties for the general heterogeneous equation \eqref{eq:main eq} with initial data $u_0\in\mathcal U_{p_0^-,p_0^+}$. 
	We will identify a unified  scaling for both exponential and sub-exponential decaying initial data, under which a Hamilton-Jacobi  limiting equation will be derived.
	  This equation will then be utilized to estimate the expansion speed of the level set.  
	  Our approach differs from that of \cite{hamel2010fast,Henderson2016nonliearity}. Before stating the result,	we also require	the following assumption:\\
	(\textbf{A2}) $\lambda(p)>0$ for any $p\in\mathbb R.$\\
	This assumption  implies, in some sense, that the problem is of monostable type (see \cite[Definition 3.4]{nadin2015critical}). 
	Indeed, if $a, b$ and $c=\partial_sf(\cdot,0)$ are almost periodic and $u_0$ is a non-null initial datum such that $0\leq u_0 \leq 1$, then the solution $u = u(t,x)$ of \eqref{eq:main eq} converges to $1$ as $t\to+\infty$ locally in $x\in\mathbb R$  by using \cite[Theorem 1.2]{liang2021propagation}. 
	In other words, $0$ is an unstable steady state, while $1$ is a globally attractive steady state. 
	Note that this assumption is equivalent to
	 $$\lambda\left(\lim\limits_{x\to\infty}\frac{1}{x}\int_0^x\frac{b(s)}{a(s)}ds/2\right)>0$$
	 since $\lambda(p)$ is symmetric with respect to $p=\lim\limits_{x\to\infty}\frac{1}{x}\int_0^x\frac{b(s)}{a(s)}ds/2$ by \cite[Theorem 3.1]{zhou2025symmetry}, and  the positivity of \(\lambda(p)\) can be ensured  (see \cite{berestycki2012spreading,liang2021propagation,zhou2025symmetry}) by one of the following assumptions:\\ 
	1. $\inf\limits_{x\in\mathbb R}\left\{4a(x)c(x)-(a^\prime+b)^2(x)\right\}>0$,\\
	2.  $\sup\limits_{x\in\mathbb R}\left(c(x)-\frac{(b(x)-a^\prime(x))^2}{4a(x)}\right)>0$, and $a, b$ and $c$ are slowly varying.\\
	3. $\inf\limits_{x}c(x)>0$ and $\lim\limits_{x\to\infty}\frac{1}{x}\int_0^x\frac{b(s)}{a(s)}ds=0$.\\
    The main  results of this paper are as follows: 	
 	\begin{theorem}\label{the:1}
		Let (\textbf{A1})-(\textbf{A2}) and (\textbf{F1})-(\textbf{F3}) hold. Then the spreading speed of \eqref{eq:main eq} with respect to $\mathcal U_{p_0^-,p_0^+}$ exists. More precisely, \eqref{eq:def of ss} holds with
		\begin{equation}\label{eq:the speed}
			\omega^+ = 
			\begin{cases}
				\frac{\lambda(-p_+)}{p_+} 
				&\text{ if } p^+_0> p_+,\\
				\frac{\lambda(-p^+_0)}{p^+_0} 
				&\text{ if } 0 < p^+_0\leq p_+, \\
					+\infty
				&\text{ if } p^+_0=0.\\
			\end{cases}
		\end{equation}
		Furthermore,  if $p^+_0=0$, then 	
		\begin{equation}\label{eq:ss}
			\begin{cases}
				\lim\limits_{t \to +\infty}	 \sup\limits_{0\leq x\leq h^{-1}(\omega t)}	 |u(t,x) - 1|  = 0, &\forall \omega \in (0,\lambda(0)),\\
				\lim\limits_{t \to +\infty}	 \sup\limits_{x \geq h^{-1}(\omega t)}	u(t,x) = 0, &\forall \omega \in(\lambda(0),+\infty).\\
			\end{cases}
		\end{equation}
		In particular, for any $\theta\in(0,1)$ fixed and $\epsilon>0$ small, there exists $T_{\epsilon,\theta}$ large such that 
		\begin{equation}\label{eq:where is level set}
				\begin{cases}
			E^+_{\theta}(t)\subset \left\{[(\omega^+(p^+_0)-\epsilon)t,(\omega^+(p^+_0)+\epsilon)t]\right\} \text{ for any }t\geq T_{\epsilon,\theta}\ \text{ if }p_0^+>0,\\
		E^+_{\theta}(t)\subset h^{-1}\left\{[(\lambda(0)-\epsilon)t,(\lambda(0)+\epsilon)t]\right\} \text{ for any }t\geq T_{\epsilon,\theta}\ \text{ if }p_0^+=0,
			\end{cases}
		\end{equation}
		where $	E^+_{\theta}(t):=E_{\theta}(t)\cap(0,+\infty)$. Besides, similar conclusions hold for $\omega^-$ and $	E^-_{\theta}(t):=E_{\theta}(t)\cap(-\infty,0)$.
	\end{theorem}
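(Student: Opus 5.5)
We would follow the Hamilton--Jacobi (WKB) approach announced in the introduction, with one scaling that covers exponential and sub-exponential data at once. Write $u=e^{-v}$ and introduce the rescaled variables
$$v^\epsilon(t,y)=\epsilon\, v\Big(\tfrac{t}{\epsilon},X_\epsilon(y)\Big),$$
where $X_\epsilon$ is adapted to the initial datum. For $p_0^+>0$ we take $X_\epsilon(y)=y/\epsilon$, the usual hyperbolic scaling. For $p_0^+=0$ we take $X_\epsilon(y)=h^{-1}(y/\epsilon)$ for $y>0$, so that $v^\epsilon(0,y)\to y$. In both cases the initial trace is $v^\epsilon(0,y)\to g(y)$ with $g(y)=p_0^+y$, respectively $g(y)=y$, locally uniformly on $(0,\infty)$.

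\textbf{Step 1 (upper bound on $u$).} We first prove uniform Lipschitz and gradient bounds for $v^\epsilon$. In the sub-exponential scaling this uses $h'\to 0$ and $h''\to0$: the chain rule produces the factors $\epsilon h'(X_\epsilon)$, so the diffusion and drift terms become lower order, while the effective gradient $p=\partial_x v$ tends to $0$. We then pass to the limit in the sense of viscosity solutions using the perturbed test function method. The correctors are the functions $u_{\kappa,p}$ of \eqref{eq:key eq} with $\kappa\to0$, and their convergence \eqref{eq:conv of test-func}, together with the bound \eqref{eq:bdd of test-func}, replaces the periodic cell problem; this is the standard almost periodic homogenization argument.
\begin{itemize}
\item For $p_0^+>0$ the limit $v^*$ solves $\partial_t v+\lambda(-\partial_y v)=0$ on $\{v>0\}$, with $v=0$ on the region where $u$ is not small.
\item For $p_0^+=0$ the Hamiltonian is frozen at $p=0$ and we get $\partial_t v+\lambda(0)=0$, hence $v^*(t,y)=\max(y-\lambda(0)t,0)$.
\end{itemize}
A more direct route for the upper bound is to construct supersolutions of the linearized equation of the form $\exp(-\epsilon^{-1}\phi(\epsilon t,\epsilon x)+u_{\kappa,p}(x))$. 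Here $\phi$ solves the Hamilton--Jacobi equation explicitly: $\phi(t,y)=p_0^+y-\lambda(-p_0^+)t$, or the Legendre form when $p_0^+>p_+$. KPP condition (F2) makes $f(x,u)\le\partial_sf(x,0)u$, so linear supersolutions suffice. This gives the second line of \eqref{eq:def of ss} and the second line of \eqref{eq:ss}.

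\textbf{Step 2 (lower bound).} The variational (Freidlin) formula solves the limit problem with the constraint $v\ge0$ and yields the zero set $\{y\le\omega^+ t\}$, where
$$\omega^+=\inf_{0<p\le p_0^+}\frac{\lambda(-p)}{p}.$$
By convexity of $\lambda$ this equals the expression in \eqref{eq:the speed}, and equals $+\infty$ when $p_0^+=0$. To convert $v^*=0$ into $u\to1$ we use the standard argument. First, $\liminf u>0$ on compacts of the interior of $\{v^*=0\}$; this follows from subsolutions built from $e^{u_{\kappa,p}}$ cut off by (F3) and the smallness of $u$. Second, the hair-trigger result \cite[Theorem 1.2]{liang2021propagation}, which relies on (A2), upgrades positivity to convergence to $1$. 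For $p_0^+=0$ the set $\{v^*=0\}$ is $\{y<\lambda(0)t\}$, which is the first line of \eqref{eq:ss}. The level-set localization \eqref{eq:where is level set} then follows at once, and the negative direction is symmetric.

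\textbf{Main obstacle.} We expect the sub-exponential case to be the hardest. The scaling there is nonlinear in space, so the subsolution must be controlled uniformly over spatial windows $h^{-1}((\lambda(0)\pm\delta)/\epsilon)$ whose length grows in an uncontrolled way. We would handle this by working in the original variables with the test function
$$w=\exp\big(-h(x)+(\lambda(0)-\delta)t+u_{\kappa,0}(x)\big)$$
in the region where $w$ is small. One checks that $h'\to0$ and $h''\to0$ make all error terms $o(1)$ compared with $\delta$, because $\kappa u_{\kappa,0}\to\lambda(0)$ uniformly. The ratio $\partial_x u_{\kappa,0}$ is bounded, but we still need the cross term $2a\,h'\,\partial_x u_{\kappa,0}$ to be small, and $h'\to0$ gives this. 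The nonlinear error $Cw^{1+\beta}$ is absorbed by (F3) after truncating with a small constant, as in Berestycki--Nadin.
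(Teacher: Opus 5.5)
Your proposal is correct in outline, but it takes a different route from the paper, mainly in the sub-exponential case.

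The paper uses a single nonlinear scaling $\psi_\epsilon(x)=h^{-1}\bigl((h(x)-h(0))/\epsilon+h(0)\bigr)$ for every $p_0^+\ge0$. Through half-relaxed limits it obtains one problem, $\max\{\partial_tZ-\lambda(\frac{p_0^+}{h'(x)}\partial_xZ),Z\}=0$ with boundary condition $Z(t,0)=0$. It solves this by an exit-time Lax formula, then turns the zero set of $Z$ into spreading via the Evans--Souganidis argument of Lemma~\ref{lem:er fen} and the Berestycki--Nadin upgrade.

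For $p_0^+>0$, your hyperbolic scaling is what the paper's scaling reduces to after the change of variables $x\mapsto l(x)$, since $\epsilon\psi_\epsilon(l(x))\to x$. There the two proofs essentially coincide.

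For $p_0^+=0$ you skip the Hamilton--Jacobi limit and use barriers $e^{-h(x)+(\lambda(0)\pm\delta)t+u_{\kappa,0}(x)}$ in the original variables. This is legitimate and more elementary. Because $h'\to0$ freezes the Hamiltonian at $p=0$, the limit problem says no more than $\partial_tZ=\lambda(0)$. Your barriers also give explicit bounds such as $u(t,x)\le Ce^{-h(x)+(\lambda(0)+\delta)t}$. The paper does use a barrier of the same shape, $e^{\delta t-h}-Me^{2(\delta t-h)}$, but with no corrector and a small rate, and only to obtain $Z(t,0)=0$; the sharp rate $\lambda(0)$ comes from homogenization. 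The unified scaling buys one framework for both regimes (and for sandwiched data via comparison), at the cost of the relaxed-limit, boundary-condition and Lax-formula machinery.

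Several steps need repair:

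\begin{itemize}
\item Truncating $w$ by a small constant, i.e.\ $\min(w,\eta)$, does not give a subsolution, because the kink is concave. Use instead $w-MW$ with $W=e^{(1+\beta')(-h(x)+(\lambda(0)-\delta)t)}e^{u_{\kappa,0}(x)}$ and $0<\beta'\le\beta$. Its growth rate $(1+\beta')(\lambda(0)-\delta)$ exceeds $\kappa u_{\kappa,0}+o(1)$ because $\lambda(0)>0$ by (A2), so $MW$ absorbs the error $Cw^{1+\beta}$ coming from (F3). Then glue this, by a maximum, with a small constant or a Dirichlet bump on the left, as the paper does with $u_1$ and $u_2$.
\item The upper barrier with exactly $p=p_0^+$ need not lie above $u_0$; take $h(x)=p_0^+x-\sqrt x$. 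Use $p<p_0^+$ and let $p\to p_0^+$, or take $p=p_+$ when $p_0^+>p_+$. No Legendre-form barrier is needed.
\item Drop the claimed uniform Lipschitz bounds. In your sub-exponential scaling they would amount to $|\partial_x\ln u|\le Ch'(x)$, which Harnack does not give. Half-relaxed limits suffice.
\item For $\liminf u>0$ inside $\{v^*=0\}$, it is not clear how the global correctors $e^{u_{\kappa,p}}$ would be localized to compact sets. The minimum-point argument with (F3) in Lemma~\ref{lem:er fen}(i) does this cleanly.
\item Passing from positivity to $u\to1$ is the Liouville/compactness argument, not hair-trigger alone. It needs positivity from $x=0$ all the way to the front, which is exactly what $Z(t,0)=0$ (or your glued barrier) provides.
\end{itemize}
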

	Note that $h^{-1}(\omega t)$ and $ h^{-1}\{[(\lambda(0)-\epsilon)t,(\lambda(0)+\epsilon)t]\}$ are well defined when $t$ is sufficiently large.
	This result shows that the positive (resp., negative) spreading speed depends only on the positive (resp., negative) decay rate of the initial data. 
	In fact, more generally, we have the following conclusion. 
		\begin{theorem}\label{the:2}
		Let (\textbf{A1})-(\textbf{A2}) and (\textbf{F1})-(\textbf{F3}) hold, and  let $u$, with  $0\leq u\leq1$,  be a solution to  
		\begin{equation}\label{equation0}
			\begin{cases}
				u_t = \partial_x(a(x)\partial_xu)+b(x)\partial_xu +  f(x,u), \ & t > 0,\ x >0,\\
				u(0,x) = u_0(x)\in[0,1], \ &x >0,
			\end{cases}
		\end{equation}
		Assume that 
			\begin{equation}\label{eq:sub super int}
			\begin{cases}
			e^{-h(x)}\leq u_0(x)\leq e^{-g(x)} \text{ on }[R_0,+\infty)\text{ for some }R_0>0,\\
			h \text{ and }g \text{ satisfy	\eqref{eq:ass of h} for $x\geq R_0$ and }\eqref{eq:intial data} \text { as }x\to+\infty \text{ with the same $p_0^+$}.
			\end{cases}
		\end{equation}
			One has\\
			(i) If $p^+_0>0$, then,  for any $\delta>0$, 
		\begin{equation}\label{eq:ss in one side}
			\begin{cases}
				\lim\limits_{t \to +\infty}	 \inf\limits_{\delta\leq x\leq \omega t}	 u(t,x)>0 , &\forall \delta>0, \omega \in (0,\omega^+),\\
				\lim\limits_{t \to +\infty}	 \sup\limits_{\delta t\leq x\leq \omega t}	 |u(t,x) - 1|  = 0, &\forall 0<\delta<\omega<\omega^+\\
				\lim\limits_{t \to +\infty}	 \sup\limits_{x \geq\omega t}	u(t,x) = 0, &\forall \omega>\omega^+,\\
			\end{cases}
		\end{equation}
		 with $\omega^+$ given in \eqref{eq:the speed}.\\
		 (ii) If $p^+_0=0$, then \eqref{eq:ss} holds with $h$ in the second equality replaced by $g$.\\		 
		 (iii) \eqref{eq:where is level set} holds with $h^{-1}\left\{[(\lambda(0)-\epsilon)t,(\lambda(0)+\epsilon)t]\right\}$ replaced by $[h^{-1}((\lambda(0)-\epsilon)t),g^{-1}((\lambda(0)+\epsilon)t)$.		
	\end{theorem}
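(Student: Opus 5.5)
The plan is to use the Hamilton--Jacobi (WKB) approach. We work with a scaling adapted to the decay of the initial datum, which treats the cases $p_0^+>0$ and $p_0^+=0$ in one framework. First, by the comparison principle on the half line, we may replace $u_0$ by $\min(e^{-h},1)$ for the lower bound and by $\min(e^{-g},1)$ for the upper bound. The boundary data at $x=0$ lie in $[0,1]$ and only affect a region of size $o(t)$ (respectively $o(h^{-1}(t))$), so for the upper bound we compare with the solution $\bar u$ of the whole-line problem. Its initial datum equals $e^{-g}$ for $x\ge R_0$ and $1$ for $x\le R_0$, and we impose $\bar u\ge 1\ge u$ at $x=0$ through the comparison with $1$. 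For the lower bound we use compactly supported (in $(0,\infty)$) subsolutions plus a subsolution built from $e^{-h}$ cut off near $x=\delta$.

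\textbf{Upper bound.} Use the linear supersolution $\bar u\le w$, where $w_t=\mathcal L w$ with $c=\partial_sf(\cdot,0)$, which is valid by (F2).

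In the case $p_0^+>0$, for each $p\in(0,p_0^+]$ we have $e^{-g(x)}\le C_p e^{-px}$ on $[0,\infty)$. The function $e^{\lambda(-p)t+\kappa t\,(\text{error})}\,e^{-px}e^{u_{\kappa,-p}(x)}$ is an approximate supersolution by \eqref{eq:key eq}, with $\kappa u_{\kappa,-p}\to\lambda(-p)$ uniformly by \eqref{eq:conv of test-func} and $e^{u_{\kappa,-p}}$ bounded between $e^{\pm C/\kappa}$ by \eqref{eq:bdd of test-func}. Concretely, we use $\psi_\kappa=e^{u_{\kappa,-p}}$ with $L_{-p}\psi_\kappa\le(\lambda(-p)+\eta)\psi_\kappa$ for $\kappa$ small. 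This gives $u\le C e^{-p(x-\frac{\lambda(-p)+\eta}{p}t)}$. Optimizing over $p\le p_0^+$ yields the speed $\min_{0<p\le p_0^+}\lambda(-p)/p$, which equals the formula \eqref{eq:the speed} by convexity of $\lambda$ and the definition of $p_+$.

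In the case $p_0^+=0$, we use the scaling $\varepsilon\to0$, $u^\varepsilon(t,x)=u(t/\varepsilon,\,g^{-1}(\cdot/\varepsilon))$, or more simply a direct barrier. Since $g'\to0$ and $g''\to0$, the function $\bar w(t,x)=\exp(-g(x)+(\lambda(0)+\eta)t)$ satisfies
$$\bar w_t-\mathcal L\bar w=\bar w\bigl(\lambda(0)+\eta-c+ O(|g'|+|g''|)\bigr),$$
and to handle $c$ non-constant we multiply by $\psi_\kappa=e^{u_{\kappa,0}}$. The error terms $a g'^2$, $a g''$, $(a'+b)g'$ and $2ag'\psi_\kappa'/\psi_\kappa$ are small only for $x\ge X_\eta$. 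We therefore apply this on $x\ge X_\eta$ and bound by $1$ on $x\le X_\eta$, i.e.\ we use $\min(1,\cdot)$, which is a generalized supersolution of the nonlinear problem. Note that $\psi_\kappa'/\psi_\kappa$ is bounded for fixed $\kappa$ by elliptic estimates. Then $u\le C_\kappa e^{-g(x)+(\lambda(0)+\eta)t}$, which tends to $0$ for $x\ge g^{-1}(\omega t)$ with $\omega>\lambda(0)+\eta$.

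\textbf{Lower bound.} This is the main obstacle: we must show that $u$ actually approaches $1$ behind the front, not only that it stays positive. The key steps are as follows.

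(a) Show a positive lower bound on $[\delta,\omega t]$ (respectively $[\delta,h^{-1}(\omega t)]$). To do this, construct a subsolution
$$\underline u=\max\bigl(0,\;e^{-\phi}\psi-Me^{-(1+\beta)\phi}\psi'\bigr)$$
with $\phi(t,x)=h(x)-(\lambda(0)-\eta)t$ (or $p(x-\frac{\lambda(-p)-\eta}{p}t)$), using (F3) to absorb the nonlinearity, in the style of Hamel--Roques. Near the front we can use instead a Hamilton--Jacobi argument: set $v^\varepsilon=-\varepsilon\ln u(t/\varepsilon, H_\varepsilon(x))$, with $H_\varepsilon$ built from $h$, so that $v^\varepsilon$ converges to the viscosity solution of
$$\min\{v_t+\text{Hamiltonian},\,v\}=0,\qquad H(p)=\lambda(-p).$$
The homogenization step uses the correctors $u_{\kappa,p}$ of \eqref{eq:key eq} through the perturbed test function method. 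Positivity of $\lambda$ from (A2), with $\lambda(0)$ being the relevant value in the sub-exponential scaling since $h'\to0$, identifies the zero set $\{v=0\}$ as $\{x<\omega^+t\}$ (respectively $\{h(x)<\lambda(0)t\}$). The standard Evans--Souganidis argument then shows $\liminf u>0$ uniformly on compact subsets of the interior of $\{v=0\}$.

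(b) Upgrade this to $u\to1$ via the Liouville-type property of the almost periodic KPP equation. Given (A2) and \cite[Theorem 1.2]{liang2021propagation}, any entire solution with a positive infimum equals $1$. Argue by contradiction along sequences $(t_n,x_n)$, shifting the coefficients and using almost periodicity to get compactness of the translates.

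The uniformity on $[\delta t,\omega t]$, and the weaker $\inf>0$ near $x=\delta$ caused by the Dirichlet-free boundary at $0$, come from applying (a) locally. Finally, (iii) follows from the upper and lower bounds combined, since the two bounds sandwich the level set between $h^{-1}((\lambda(0)-\epsilon)t)$ and $g^{-1}((\lambda(0)+\epsilon)t)$.
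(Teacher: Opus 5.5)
\noindent Your outline is workable, but it combines explicit barriers with the paper's method rather than following the paper's argument. The paper first reduces to $u_0=e^{-h}$ (resp.\ $e^{-g}$). It then gets $\liminf_{t\to\infty}u(t,\delta)>0$ from a stationary eigenfunction subsolution near $x=\delta$; this replaces $u(t,0)\to1$ and supplies the boundary condition $Z(t,0)=0$. Finally it reruns the homogenization with the $h$-adapted scaling $\psi_\epsilon$. The limit problem $\max\{Z_t-\lambda(\frac{p_0^+}{h'}Z_x),Z\}=0$ is solved explicitly and covers $p_0^+>p_+$, $0<p_0^+\le p_+$ and $p_0^+=0$ at once. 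The upgrade to $u\to1$ is quoted from Berestycki--Nadin and Liang et al.

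You instead get the upper bounds from the capped barriers $\min\{1,Ce^{(\lambda(-p)+\eta)t-px}e^{u_{\kappa,-p}}\}$ and $\min\{1,Ce^{(\lambda(0)+\eta)t-g(x)}e^{u_{\kappa,0}}\}$. This is correct, more elementary than the half-relaxed-limit argument, and quantitative. It is also insensitive to the boundary values at $x=0$, because for $C$ large the barrier equals $1$ there for all $t\ge0$. Your Hamel--Roques subsolution $e^{-\phi}\psi-Me^{-(1+\beta)\phi}\tilde\psi$ is likewise a valid, more explicit substitute for the HJ lower bound in two cases. When $p_0^+=0$, (A2) is exactly what absorbs the correction, since $(1+\beta)(\lambda(0)-\eta)>\lambda(0)+\eta$ for small $\eta$. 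When $p_0^+$ lies where $p\mapsto\lambda(-p)/p$ is strictly decreasing, it works by letting $p\downarrow p_0^+$. Being supported far from $x=0$, it needs no boundary information.

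\noindent Several points need repair.

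\emph{The whole-line comparison.} This detour is wrong as written. By the strong maximum principle $\bar u(t,0)<1$ for $t>0$, so $\bar u$ does not dominate a half-line solution whose boundary value is, say, $1$. Drop it and compare $u$ directly with the capped barriers. Also, $e^{-g}\le C_pe^{-px}$ can fail at $p=p_0^+$ (e.g.\ $g=p_0^+x-\sqrt x$); take $p<p_0^+$ and use the continuity of $\lambda$.

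\emph{The lower bound for $p_0^+\ge p_+$.} Here the explicit subsolution cannot work. Ordering at $t=0$ forces $p\ge p_0^+$. Absorbing the $(1+\beta)$-term needs $\lambda(-(1+\beta)p)/((1+\beta)p)<\lambda(-p)/p$, which fails for $p\ge p_+$ because $p\mapsto\lambda(-p)/p$ is nondecreasing there. In this regime you really must run the HJ argument, i.e.\ the paper's route. On the half line, the comparison $Z_*\ge\zeta$ requires $Z_*(t,0)=0$, and it is this comparison that identifies the zero set $\{0<x\le\lambda(-p_+)t/p_+\}$ of $\zeta$ (fed from the endpoint $x=0$). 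Concretely, $u$ must not be exponentially small on $[\delta,c_0t]$ for some $c_0>0$. With arbitrary boundary data this has to be proved: a stationary subsolution near $\delta$, followed by a positive-speed invasion argument in the spirit of Step~1 of the proof of Theorem~\ref{thm:homo- eq}. You mention compactly supported subsolutions but never connect them to this boundary condition.

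\emph{The Liouville step.} Only $a$, $b$ and $\partial_sf(\cdot,0)$ are assumed almost periodic. So the compactness of the translates $f(\cdot+x_n,\cdot)$ that your Liouville step uses is not available from the hypotheses as stated.
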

	We have the following result for front-like initial data.
	\begin{theorem}\label{thm:front like initial data}
			Let (\textbf{A1})-(\textbf{A2}) and (\textbf{F1})-(\textbf{F3}) hold, and $u$ be a solution to \eqref{eq:main eq}.
		Assume that  $0<\inf\limits_{x\leq0}u_0(x)\leq\sup\limits_{x\leq0} u_0\leq1$ and \eqref{eq:sub super int}.
		Then all the conclusions in Theorem \ref{the:2} hold, and $\delta$ in \eqref{eq:ss in one side} is taken to be $-\infty$.
	\end{theorem}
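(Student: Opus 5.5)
The idea is to reduce Theorem \ref{thm:front like initial data} to Theorem \ref{the:2}, which already covers the one-sided problem on $x>0$, using comparison on the whole line. The only new ingredient is handling the left half-line where $u_0$ is bounded below, so that $\delta$ in \eqref{eq:ss in one side} can be pushed to $-\infty$.

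\textbf{Upper bounds.} Let $\bar u$ solve \eqref{eq:main eq} with initial datum $\bar u_0=\max(u_0,\mathds 1_{x\le R_0})$. Then $\bar u_0\le e^{-g}$ on $[R_0,\infty)$ and $\bar u_0 = 1$ on $(-\infty,R_0]$. Restricting $\bar u$ to $x>0$, it is a solution of \eqref{equation0} with $0\le\bar u\le1$ (boundary values at $x=0$ are irrelevant for the upper bound). The upper estimates in Theorem \ref{the:2}, namely the third line of \eqref{eq:ss in one side} and the second line of \eqref{eq:ss} with $g$, come from supersolutions built only from $g$ and the eigenfunctions $e^{u_{\kappa,p}}$. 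I will check that this construction uses nothing about the left side except $u\le1$, so it applies to $\bar u$ and hence to $u\le\bar u$. If one prefers not to reopen that proof, the same conclusion follows from a direct linear supersolution: $\min\{1, e^{\lambda(-p)t}\,e^{-px}\,\phi_p(x)\}$ for $p\le p_0^+$, with a Hamilton--Jacobi/WKB version $e^{-g(x)+\lambda(0)t+o(t)}$ when $p_0^+=0$. These handle the left part because they exceed $1$ there.

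\textbf{Lower bounds.} Since $u_0\ge e^{-h}$ on $[R_0,\infty)$, the one-sided lower estimates of Theorem \ref{the:2} apply to $u$ itself. Comparison with the solution of \eqref{equation0} having Dirichlet datum $0$ at $x=0$ gives a subsolution on $x>0$ with the same $u_0$ there. This yields $u\to1$ uniformly on $[\delta t,\omega t]$ (resp.\ $[0,h^{-1}(\omega t)]$ up to the $\delta$ adjustment), and $\liminf\inf_{[\delta,\omega t]}u>0$.

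\textbf{Left region.} Take $\eta:=\inf_{x\le0}u_0>0$ and set $\underline u_0:=\eta\mathds 1_{x\le0}$. The solution $\underline u$ of \eqref{eq:main eq} with initial datum $\underline u_0$ satisfies $\underline u\le u$. By the invasion result of \cite[Theorem 1.2]{liang2021propagation} under (\textbf{A2}), together with the spreading speed for compactly supported data from \cite{berestycki2012spreading}, I will show $\inf_{x\le -c}\underline u(t,x)\to1$, and more precisely that $\underline u\to1$ uniformly on $(-\infty,(\omega_c-\epsilon)t]$, where $\omega_c=\lambda(-p_+)/p_+>0$ is the positive-direction speed for compact data. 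The argument compares $\underline u$ from below with translates of solutions starting from $\eta\mathds 1_{[y-L,y]}$ for all $y\le0$. The estimates are uniform in $y$ because the almost periodic coefficients make the spreading for compactly supported data uniform with respect to translation: every translate of the coefficients lies in the hull, and one uses compactness of the hull.

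\textbf{Combining.} The intervals $(-\infty,(\omega_c-\epsilon)t]$ and $[\delta t,\omega t]$ overlap when $\delta<\omega_c$. The case $p_0^+$ with $\omega^+\ge\omega_c$ always holds, because $\lambda(-p)/p$ is minimized at $p_+$. So the union covers $(-\infty,\omega t]$, giving \eqref{eq:ss in one side} with $\delta=-\infty$. The level-set statement \eqref{eq:where is level set} then follows as in Theorem \ref{the:2}(iii).

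\textbf{Main obstacle.} The delicate step is the uniform-in-translation invasion for the left half-line in the almost periodic setting, i.e.\ that spreading at speed $\omega_c$ is uniform with respect to shifts of the initial bump. I would first try to obtain it by a compactness/contradiction argument over the hull of $(a,b,\partial_sf(\cdot,0))$, combined with the fact that $\lambda(p)$ is the same for every element of the hull.
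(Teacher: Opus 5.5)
Your argument is correct in outline, but on the left half-line it takes a noticeably heavier route than the paper.

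\textbf{How the paper handles $x\le0$.} The paper uses no auxiliary solution from $\eta\mathds 1_{x\le0}$ and no spreading result for compactly supported data.
\begin{enumerate}
\item It sets $m:=\min\{\inf_{t\ge0}u(t,0),\inf_{x\le0}u_0\}$. This is positive because $u(t,0)>0$ for $t>0$ and $u(t,0)\to1$.
\item It picks the KPP minorant $\underline f(s)=\frac12\inf_x\partial_sf(x,0)\,s(\epsilon-s)\le f(x,s)$ with $\epsilon<m$ small, which is possible by (\textbf{F3}).
\item It compares $u$ on the quarter-plane $\{t>0,\ x\le0\}$ with the spatially constant solution of $\underline u'=\underline f(\underline u)$, $\underline u(0)=m/2$. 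The boundary $x=0$ is controlled precisely by $m$.
\end{enumerate}
This gives $\liminf_{t\to\infty}\inf_{x\le0}u>0$ directly. Combined with the first line of \eqref{eq:ss in one side} and local convergence on $[0,\delta]$, it yields $\liminf_{t}\inf_{x\le\omega t}u>0$. The same positivity-to-one upgrade as in Theorem \ref{the:1 simple case} then gives convergence to $1$ on $(-\infty,\omega t]$.

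\textbf{What your route buys and costs.} Your version gives the stronger, self-contained statement that $u\to1$ on $(-\infty,(\omega_c-\epsilon)t]$, without using the behaviour of $u(t,0)$. Its price is exactly the uniform-in-translation hair-trigger/invasion property that you flag as the main obstacle. Proving it would need, for instance, uniform positivity of Dirichlet principal eigenvalues on long intervals over the whole hull, plus a compactness argument. The paper's boundary-controlled ODE comparison avoids that obstacle entirely, so you may want to replace your left-region step with it.

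\textbf{Upper bounds.} The detour through $\bar u$ is unnecessary. Theorem \ref{the:2} is stated for any solution of \eqref{equation0} with $0\le u\le1$, with no condition at $x=0$. Once comparison gives $0\le u\le1$, it applies verbatim to $u$ restricted to $x>0$, which is all the paper does.
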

	Hamel and Roques \cite{hamel2010fast} studied the spreading properties with such initial data in the homogeneous case. 
	In contrast, at the end of this paper, we will illustrate several typical examples of such initial data in the almost periodic setting.
	\begin{corollary}\label{cor:one side}
		Let (\textbf{A1})-(\textbf{A2}) and (\textbf{F1})-(\textbf{F3}) hold, and let $u$ be a bounded solution to \eqref{equation0}	
	  with boundary condition either\\ (i) $ u(t,0)=\gamma(t)\in[0,1]$; or\\
	  (ii) $ \mu(t)u(t,0)-\nu(t)u_x(t,0)= 0$ for any $ t\geq0,$ where $\mu\geq0$, $\nu\geq0$ and $\mu^2+\nu^2>0$.\\
	 Assume  that  $\sup\limits_{x\in\RR,u\in\RR}\frac{f(x,u)}{u}<+\infty$ and \eqref{eq:sub super int}.
		 Then all the conclusions in Theorem \ref{the:2} hold.
	\end{corollary}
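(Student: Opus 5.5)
The plan is to reduce Corollary \ref{cor:one side} to Theorem \ref{the:2} by comparison. The key point is that Theorem \ref{the:2} is stated for \emph{any} solution $0\le u\le 1$ of \eqref{equation0} on $x>0$, with no boundary condition imposed. Its proof, through the Hamilton--Jacobi limit, must therefore produce conclusions that are insensitive to what happens at $x=0$. I would handle the two sides of \eqref{eq:ss in one side} (and of \eqref{eq:ss}) separately: an upper estimate (the solution is small beyond $\omega t$, or beyond $g^{-1}(\omega t)$) and a lower estimate (the solution is close to $1$ inside).

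\textbf{Step 1 (bounds on $u$).} The solution $u$ is bounded, but it need not lie in $[0,1]$ a priori. For case (i) with $0\le\gamma\le 1$ and $0\le u_0\le 1$, the parabolic comparison principle on the half-line, using $f(x,0)=f(x,1)=0$, gives $0\le u\le 1$. For the Robin/Neumann case (ii) with $\mu,\nu\ge 0$, the constants $0$ and $1$ are again sub- and supersolutions satisfying the boundary condition, so $0\le u\le 1$ as well. Once $0\le u\le1$ holds, $u$ is a solution of \eqref{equation0} in the sense of Theorem \ref{the:2}, and all conclusions follow directly. The hypothesis $\sup f(x,u)/u<\infty$ is presumably included so that $f$ is extended to $u\notin[0,1]$ in a Lipschitz way. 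That extension makes well-posedness and the comparison with $0$ and $1$ legitimate for a merely bounded solution: writing $f(x,u)=c(x,u)u$ with $c$ bounded turns $u\ge 0$ into a linear maximum principle statement.

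\textbf{Step 2 (fallback if Theorem \ref{the:2} secretly uses information at the boundary).} Should the proof of Theorem \ref{the:2} need control near $x=0$, I would argue by hand on each side.
\begin{itemize}
\item \emph{Upper estimate.} Dominate $u$ by the whole-line solution $\bar u$ of \eqref{eq:main eq} with initial datum $\max(\|u\|_\infty,1)$ truncated appropriately and equal to $e^{-g}$-type decay for $x\ge R_0$. Because $\bar u$ is large near $x\le 0$, one has $\bar u(t,0)\ge u(t,0)$, or the boundary inequality in case (ii) holds, since $\bar u$ can be taken nonincreasing near $0$ or we compare on $[1,\infty)$. In practice, bounding by $\max(1,\|u\|_\infty)$ on $x\le 1$ suffices. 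Theorem \ref{the:1}'s upper bounds, which depend only on the positive tail, then give the decay.
\item \emph{Lower estimate.} Compare from below with the solution of the Dirichlet problem $v(t,0)=0$ on $x>0$ with initial datum $\min(u_0,e^{-h})$. Then $v\le u$ in both cases (i) and (ii), since $u\ge 0$ at the boundary. Now $v$ is itself a solution of \eqref{equation0} with values in $[0,1]$, so Theorem \ref{the:2} applied to $v$ yields the interior and lower bounds.
\end{itemize}

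\textbf{Main obstacle.} The main point to check is Step 1 in case (ii): one must verify that the oblique boundary condition with $\mu,\nu\ge0$ preserves the order, which is the Hopf-lemma argument at $x=0$. One must also confirm that sub- and supersolution comparison on the half-line is valid for bounded (not a priori $[0,1]$-valued) solutions under the linear growth bound on $f$.
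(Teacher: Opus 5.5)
Your Step 1 is essentially the paper's proof. The paper applies the parabolic maximum principle for bounded solutions on unbounded domains (via Protter--Weinberger), writing $f(x,u)=c\,u$ with $c$ bounded above, which is exactly the role of $\sup f(x,u)/u<+\infty$, to get $u>0$, and then invokes Theorem \ref{the:2}; your extra check $u\le 1$ supplies the range hypothesis of Theorem \ref{the:2} that the paper leaves implicit.

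Step 2 is therefore moot, since Theorem \ref{the:2} imposes no boundary condition. Its upper-estimate bullet would also fail as written: a whole-line solution with data $\le 1$, not identically $1$, has $\bar u(t,0)<1$ for $t>0$, so it need not dominate $u$ at $x=0$ when, for example, $\gamma\equiv 1$.
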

	In particular, Corollary \ref{cor:one side} still holds under homogeneous Dirichlet or Neumann boundary conditions.
	
	We next consider a more general class of initial data.	
	Let $\{I_n^+\}_{n=1}^{\infty}$ be a family of closed intervals in $\mathbb{R}_+$ satisfying
	\begin{equation}\label{eq:interval_conditions}
		\text{(i) } I_n^+ \cap I_m^+ = \varnothing \text{ for } n \neq m; \qquad
		\text{(ii) } 0 < 2\kappa \le |I_n^+| \le M < \infty \;\; \forall n,
	\end{equation}
	where $|I_n^+|$ denotes the length of $I_n^+$, and $\kappa, M$ are constants. The family $\{I_n^+\}$ is said to be relatively dense in $\mathbb{R}_+$ if there exists $L>0$ such that for every $x\in\mathbb{R}_+$, there exists some $n$ satisfying
	\begin{equation}\label{eq:relatively_dense_plus}
		I_n^+ \subset [x, x+L].
	\end{equation}
	The notion of relative denseness in $\mathbb{R}_-$ can be defined analogously, with the inclusion condition replaced by $I_n^- \subset [x-L, x]$.
	Let $\mathcal{I}^+ = \bigcup_n I_n^+ \text{ and } \mathcal{I}^- = \bigcup_n I_n^-,$
	where $\{I_n^+\}$ and $\{I_n^-\}$ are relatively dense families of intervals in $\mathbb{R}_+$ and $\mathbb{R}_-$, respectively. Define
	\[
	\mathcal{I} = \mathcal{I}^+ \cup \mathcal{I}^-.
	\]
	Let $A\subset \mathbb R$. Denote 
	\[
	\mathds{1}_{A}(x) = 1 \text{ if }x\in A,\text{ and }0 \text{ if }x\in A^c.
	\]
	Now we may consider the propagation under the following more general initial data:
	\begin{theorem}\label{thm:general initial data}
		Let (\textbf{A1})-(\textbf{A2}) and (\textbf{F1})-(\textbf{F3}) hold, and  let $u$, with  $0\leq u\leq1$,  be a solution to  \eqref{equation0}.
		Assume that 
		\begin{equation}\label{eq:sub super int0}
			\begin{cases}
				\mathds{1}_{\mathcal I^+}e^{-h(x)}\leq u_0(x)\leq e^{-g(x)} \text{ on }[R_0,+\infty)\text{ for some }R_0>0,\\
				h \text{ and }g \text{ satisfy	\eqref{eq:ass of h} for $x\geq R_0$ and }\eqref{eq:intial data} \text { as }x\to+\infty \text{ with the same $p_0^+$}.
			\end{cases}
		\end{equation}
		Then all the conclusions in Theorem \ref{the:2} hold. 
		In particular, Corollary \ref{cor:one side} remains valid if \eqref{eq:sub super int} is replaced by \eqref{eq:sub super int0}
	\end{theorem}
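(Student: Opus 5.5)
Since the upper bound $u_0\le e^{-g}$ is the same as in Theorem \ref{the:2}, the upper estimates (the third line of \eqref{eq:ss in one side} and the second line of \eqref{eq:ss} with $g$) follow by comparison exactly as there. The whole difficulty is therefore in the lower estimates. The plan is to reduce them to Theorem \ref{the:2}: after a short time, the solution with gappy initial data lies above a solution whose initial datum satisfies a lower bound of the form $e^{-\tilde h}$, with $\tilde h$ obeying \eqref{eq:ass of h}--\eqref{eq:intial data} with the same $p_0^+$.

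\textbf{Step 1: fill the gaps at time $t=1$.} Let $v$ solve \eqref{equation0} with initial datum $v_0=\mathds{1}_{\mathcal I^+}e^{-h}$ on $[R_0,\infty)$ and $v_0=0$ elsewhere; by comparison $u\ge v$. Since $0\le v\le 1$ and $f(x,s)\ge -Ks$ for some $K\ge 0$ (by (\textbf{F1}) and $C^1$ regularity), $v$ is a supersolution of the linear equation $w_t=(aw_x)_x+bw_x-Kw$. Uniform ellipticity with bounded Hölder coefficients gives a Gaussian lower bound for the corresponding fundamental solution $\Gamma$:
\[
\Gamma(1,x,y)\ge c_1e^{-C_1|x-y|^2}.
\]
Fix $x\ge R_0+L$. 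By relative denseness \eqref{eq:relatively_dense_plus} there is an interval $I_n^+\subset[x-L,x]$ of length at least $2\kappa$. For large $x$, monotonicity of $h$ gives $h\le h(x)$ on $I_n^+$. Hence
\[
v(1,x)\ge \int_{I_n^+}\Gamma(1,x,y)e^{-h(y)}\,dy\ge 2\kappa c_1e^{-C_1L^2}e^{-h(x)} = c_2e^{-h(x)}.
\]
So $v(1,\cdot)\ge e^{-\tilde h}$ on $[R_1,\infty)$, where $\tilde h=h-\ln c_2$ satisfies \eqref{eq:ass of h}--\eqref{eq:intial data} with the same $p_0^+$. Also $v(1,\cdot)\le u(1,\cdot)\le e^{-\hat g}$, where $\hat g$ is obtained from $g$ by the upper bound argument at time $1$. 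Alternatively, the lower argument only needs the lower bound, so we simply reuse the upper estimates for $u$ directly.

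\textbf{Step 2: apply the lower-bound parts of Theorem \ref{the:2}.} These are the first two lines of \eqref{eq:ss in one side} and the first line of \eqref{eq:ss}, applied to $v(\cdot+1,\cdot)$ with lower profile $\tilde h$. Their proofs use only the lower bound $e^{-h}$, through subsolution or Hamilton--Jacobi arguments. The time shift by $1$ and the additive constant in $\tilde h$ do not change the speeds. For $p_0^+=0$ they also do not change the location: $\tilde h^{-1}(\omega t)=h^{-1}(\omega t+\ln c_2)$, and $h^{-1}(\omega t+\ln c_2)\le h^{-1}(\omega' t)$ for any $\omega'>\omega$ and $t$ large, so the statements for all $\omega<\lambda(0)$ are preserved. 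Assertion (iii) then follows as in Theorem \ref{the:2}.

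\textbf{Step 3: the boundary-value problems of Corollary \ref{cor:one side}.} Under $\sup f(x,u)/u<\infty$, the same Gaussian lower bound holds for the half-line problem with Dirichlet or Robin condition, at points at distance at least $L+1$ from the boundary. Near the boundary, the Dirichlet kernel degenerates, but we only need the bound for $x$ large. We then invoke Corollary \ref{cor:one side} for the shifted problem.

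The main obstacle is the justification of the Gaussian lower bound with uniform constants. For divergence-form operators with a drift and bounded coefficients, I would get it from Aronson's estimates, or from a Harnack inequality applied on unit cubes. In the half-line case, the constants must also be uniform in $x$ away from the boundary; there I would compare with the Dirichlet problem on $(x-L-1,x+1)$, whose kernel is translation-uniform by the boundedness of the coefficients.
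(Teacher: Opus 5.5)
Your proposal is correct and follows essentially the paper's route: the paper also fills the gaps at $t=1$ with a $\xi$-uniform lower bound on the Dirichlet heat kernel of the linear operator on the translated window $(\xi-L,\xi+L)$, dropping $f$ because it is nonnegative on $[0,1]$. This gives $u(1,\xi)\ge C_0e^{-h(\xi+L/2)}$ for large $\xi$, and the paper then restarts Theorem \ref{the:2} from $t=1$ with the profile $h(\cdot+L/2)-\ln C_0$.

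Two small corrections. First, \eqref{equation0} is already a half-line problem with no prescribed data at $x=0$, so the main statement itself needs the local Dirichlet comparison from your last paragraph, not the whole-line Aronson kernel of Step 1. Second, in Step 2 the inequality you actually need is $h^{-1}(\omega t)\le h^{-1}(\omega' t+\ln c_2)$ for $\omega<\omega'<\lambda(0)$ and large $t$; it holds for the same reason as the one you wrote.
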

	From this theorem, we  immediately obtain the following corollary.
	\begin{corollary}\label{cor:general initial data}
		Let (\textbf{A1})-(\textbf{A2}) and (\textbf{F1})-(\textbf{F3}) hold, and $u$ be a solution to \eqref{eq:main eq}.\\
		(i) If there exist $h, g\in\mathcal U_{p^-_0,p^+_0}$ such that
		$$\mathds{1}_{\mathcal I}	e^{-h(x)}\leq u_0(x)\leq e^{-g(x)} \text{ for }|x|>R_0,$$
		then all the conclusions in Theorem \ref{the:1} hold.\\
		(ii) If   $0<\inf\limits_{x\leq0}u_0(x)\leq\sup\limits_{x\leq0} u_0\leq1$ and 	\eqref{eq:sub super int0} is satisfied,
		then all the conclusions in  Theorem \ref{thm:front like initial data} hold.
	\end{corollary}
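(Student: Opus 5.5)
The corollary is a direct consequence of Theorem \ref{thm:general initial data}, applied separately on each half-line. The plan is to reduce both parts to that theorem (and its mirror image on $\mathbb R_-$) by comparison.

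For (i), restrict attention first to $x>0$. The solution $u$ of \eqref{eq:main eq} satisfies $0\le u\le 1$. On $(0,+\infty)$ it solves \eqref{equation0}, with boundary values $u(t,0)=:\gamma(t)\in[0,1]$. This is case (i) of Corollary \ref{cor:one side}, which Theorem \ref{thm:general initial data} asserts remains valid under \eqref{eq:sub super int0}. Since $h,g\in\mathcal U_{p_0^-,p_0^+}$ (interpreting this as $e^{-h},e^{-g}$ having the prescribed profiles), both satisfy \eqref{eq:ass of h} and \eqref{eq:intial data} near $+\infty$ with the same $p_0^+$. Moreover, $\mathds 1_{\mathcal I}e^{-h}\ge\mathds 1_{\mathcal I^+}e^{-h}$ on $[R_0,\infty)$. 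Hence \eqref{eq:sub super int0} holds, and we obtain all conclusions of Theorem \ref{the:2} on the right.

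Applying the reflection $x\mapsto -x$ gives the same conclusions on the left. Under this reflection the coefficients $a(-x)$, $-b(-x)$, $f(-x,\cdot)$ remain almost periodic and satisfy (\textbf{A1}), (\textbf{F1})--(\textbf{F3}), and $\mathcal I^-$ becomes relatively dense in $\mathbb R_+$. The one point to check is that the reflected eigenvalue is $\lambda(-p)$, so that (\textbf{A2}) persists and the left speed is expressed through $\lambda(p_-)/p_-$ or $\lambda(p_0^-)/p_0^-$. This follows directly from Definition \ref{def:gpe} via the substitution $\phi(x)\mapsto\phi(-x)$, which conjugates $L_p$ for the original operator into $L_{-p}$ for the reflected one.

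The remaining step, and the only mildly delicate one, is gluing the two half-line statements into \eqref{eq:def of ss} and \eqref{eq:ss}. The half-line results give convergence to $1$ only on $[\delta t,\omega t]$, or on $[\delta,\omega t]$ with a positive lower bound. To fill the middle region $[-\delta t,\delta t]$, I would use that $u(t_0,\cdot)\ge\eta>0$ on a large interval around $0$ for some $t_0$; this follows from the first line of \eqref{eq:ss in one side} on both sides together with the strong maximum principle at $x=0$. Comparing with the solution started from the compactly supported datum $\eta\mathds 1_{[-R,R]}$, whose spreading speeds are $\lambda(-p_+)/p_+>0$ and $\lambda(p_-)/p_->0$ by (\textbf{A2}) and \cite{berestycki2012spreading}, shows that $u\to1$ uniformly on $[-\delta t,\delta t]$ once $\delta$ is smaller than these speeds. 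Level-set localization \eqref{eq:where is level set} then follows as in Theorem \ref{the:1}.

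For (ii), the right half is again Theorem \ref{thm:general initial data}. On the left, $\inf_{x\le0}u_0>0$, so $u_0\ge \eta\mathds 1_{(-\infty,0]}$, and comparison with front-like data yields $u\to1$ uniformly on $(-\infty,\omega t]$ for $\omega<\omega^+$, exactly as in Theorem \ref{thm:front like initial data}. I would simply invoke the proof of Theorem \ref{thm:front like initial data} with the lower bound $e^{-h}$ replaced by $\mathds 1_{\mathcal I^+}e^{-h}$, which is legitimate because the right-side estimates come only from Theorem \ref{thm:general initial data}.
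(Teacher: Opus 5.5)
Your proposal is correct and follows the paper's implicit route. The paper obtains the corollary directly from Theorem \ref{thm:general initial data}, applied to the restriction of $u$ to each half-line, with the left side handled by the reflection $x\mapsto -x$ (which turns $\lambda(p)$ into $\lambda(-p)$). The region near the origin and part (ii) are then handled as in the proofs of Theorems \ref{the:1} and \ref{thm:front like initial data}; your comparison with compactly supported data simply makes that gluing step explicit.

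One small simplification: you can apply the main statement of Theorem \ref{thm:general initial data} directly, since $u|_{x>0}$ solves \eqref{equation0} with $0\le u\le 1$. There is no need to pass through Corollary \ref{cor:one side}, whose extra hypothesis on $f(x,u)/u$ is not assumed here (although it does hold for $u\in[0,1]$ by (\textbf{F2})).
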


	\textbf{Outline of this paper:} In Section 2, we will derive a Hamilton-Jacobi equation under an appropriate scaling.
	 In Section 3, we will give an explicit expression for the solution of the Hamilton–Jacobi equation obtained in Section 2.
	  Section 4 is devoted to proving of Theorems \ref{the:1} and \ref{the:2}, and to giving four types of typical examples.

	\section{Homogenization of the equation}
	Recall the  equation we will consider is
	\begin{equation}\label{equation1}
		\begin{cases}
			u_t = \partial_x(a(x)\partial_xu)+b(x)\partial_xu +  f(x,u), \ &x \in \RR,\  t > 0,\\
			u(0,x) = u_0(x)=e^{-h(x)}\in\mathcal U_{p_0^-,p_0^+}, \ &x \in \RR.
		\end{cases}
	\end{equation}
	In this section, we proceed as follows  for equation \eqref{equation1}. First, we construct a suitable scaling that depends on the initial data.
	 Second, we determine the equation satisfied by the homogenized limit function of the solution under this scaling. 
	 It is worth noting that we only consider the case 	$x\geq R_0$; the case $x\leq R_0$ can be treated analogously.
	After a translation, we may assume without loss of generality that  $h$ satisfies:
	\begin{equation}\label{eq:h}
		h(x)\in C^2([0,+\infty)),\	h(x)\geq0 \text { and } h^\prime(x)>0\ \ \text{for any }x\geq0.
	\end{equation}	
	Furthermore, we define the following scaling transformation:
	\begin{equation}\label{eq:def of psi_varep}
				\psi_\epsilon(x):=h^{-1}\left(\frac{h(x)-h(0)}{\epsilon}+h(0)\right),\  \text{for any } x\geq0.
	\end{equation} 
    Set
    \begin{equation}\label{eq:def of u_varep}
    	u_\epsilon(t,x):=u\left(\frac{t}{\epsilon},\psi_\epsilon(x)\right),\ t\geq0, x\geq0,
    \end{equation}
    and 
	 \begin{equation}\label{eq:def of z_varep}
		Z_\epsilon(t,x):=\epsilon\ln u_\epsilon(t,x),\  t\geq0, x\geq0.
	\end{equation}
	Then one can  check that, on $\{(t,x)|t>0,x\geq0\}$, $Z_\epsilon(t,x)$ satisfies
	\begin{equation}\label{eq:z_varep}
	\partial_t Z_\epsilon =\frac{\partial_x(a(\psi_\epsilon)\partial_{x} Z_\epsilon)}{\epsilon(\psi_{\epsilon}^{\prime})^2} +\frac{a(\psi_\epsilon)}{(\epsilon\psi_\epsilon^{\prime})^2}(\partial_{x} Z_\epsilon)^2 +\left(\frac{b(\psi_\epsilon)}{\epsilon\psi_\epsilon^{\prime}}-\frac{a(\psi_{\epsilon})\psi_{\epsilon}^{\prime\prime}}{\epsilon(\psi_{\epsilon}^{\prime})^3}\right)\partial_{x} Z_\epsilon+f(\psi_\epsilon,u_\epsilon)/u_\epsilon,
	\end{equation}
	where all the derivative at $x=0$ is understood as the right-hand derivative. 
	Moreover,
	\begin{equation}\label{eq:z_varep ini-bou}
		\begin{cases}
			Z_\epsilon(0,x) =(1-\epsilon)h(0) -h(x),\ x\geq0,\\
			Z_\epsilon(t,0) = \epsilon \ln u(\frac{t}{\epsilon},0),\ t\geq0,
		\end{cases}
	\end{equation}
	Let $Z^*$ and $Z_*$ be the half-relaxed limits of $Z_\epsilon$, that is, for any $(t,x)\in\overline{\mathbb R_+^2}$,
	\begin{center}
		$\begin{aligned}
			Z^*(t,x) = \limsup_{(s,y)\in[0,+\infty)\times[0,+\infty)\atop (s,y) \to (t,x), \epsilon \to 0 }Z_\epsilon(s,y), \quad 
			Z_*(t,x) = \liminf_{(s,y)\in[0,+\infty)\times[0,+\infty)\atop (s,y) \to (t,x), \epsilon \to 0}Z_\epsilon(s,y).
		\end{aligned}$
	\end{center}
	 Then we have 
	 \begin{lemma}\label{lem:well defined and initial value}
	 	$Z^*$ and $Z_*$ are well-defined,
	 	and
	 	\begin{equation*}
	 		Z^*(0,x) = Z_*(0,x) = h(0) - h(x), \quad \forall x \geq 0.
	 	\end{equation*}
	 \end{lemma}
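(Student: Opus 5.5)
The plan is to trap $u$ between two explicit exponential barriers whose logarithms scale exactly like $Z_\epsilon$ under \eqref{eq:def of psi_varep}. Everything rests on one identity: by definition of $\psi_\epsilon$,
\[
h(\psi_\epsilon(y))=\frac{h(y)-h(0)}{\epsilon}+h(0),\qquad y\ge 0 .
\]
So a bound of the form $|\ln u(t,x)+h(x)|\le Kt+C$ for $x\ge 0$ becomes, after setting $t=s/\epsilon$, $x=\psi_\epsilon(y)$ and multiplying by $\epsilon$,
\[
\bigl|Z_\epsilon(s,y)-\bigl(h(0)-h(y)\bigr)\bigr|\le Ks+\epsilon\bigl(C+h(0)\bigr).
\]
Such a bound is uniform on compact subsets of $\overline{\mathbb R_+^2}$, so $Z^*$ and $Z_*$ are finite. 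Letting $(s,y)\to(0,x)$ and $\epsilon\to0$, continuity of $h$ gives $Z^*(0,x)=Z_*(0,x)=h(0)-h(x)$.

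The first step is to extend $h$ to a function $\tilde h\in C^2(\mathbb R)$ with $\tilde h',\tilde h''$ bounded. This is possible because $h'\to p_0^\pm$ and $h''\to0$ at $\pm\infty$ by \eqref{eq:intial data}, so $h'$ and $h''$ are bounded on $I_{R_0}$. Two extensions are needed:
\begin{itemize}
\item $\tilde h_1$, with $\tilde h_1=h$ on $I_{R_0}$ (equivalently on $[0,\infty)$ after the translation \eqref{eq:h}), and arbitrary but bounded on $(-R_0,R_0)$;
\item $\tilde h_2$, with $\tilde h_2\le h$ on $I_{R_0}$, $\tilde h_2\le 0$ on a neighbourhood of $[-R_0,R_0]$, and $\tilde h_2=h-c$ away from a compact set, for a constant $c$.
\end{itemize}
For $w=e^{-\tilde h(x)\pm Kt}$ a direct computation gives
\[
w_t-(aw_x)_x-bw_x=w\Bigl(\pm K-a\tilde h'^2+(a\tilde h')'+b\tilde h'\Bigr).
\]
The bracket is bounded by $\pm K+M$, where $M$ depends only on $\|a\|_{C^1}$, $\|b\|_\infty$ and $\|\tilde h'\|_{C^1}$.

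\emph{Upper barrier.} Take $\overline w=e^{-\tilde h_2(x)+Kt}$ with $K\ge M+\|\partial_sf(\cdot,0)\|_\infty$. By (F2), $f(x,u)\le \partial_sf(x,0)u$, so $u$ is a subsolution of the linear equation $v_t=(av_x)_x+bv_x+\partial_sf(x,0)v$, and $\overline w$ is a supersolution of it. At $t=0$ we have $\overline w\ge1\ge u_0$ where $\tilde h_2\le0$, and $\overline w\ge e^{-h}=u_0$ on $I_{R_0}$. The comparison principle for bounded coefficients and solutions of at most exponential growth then gives $u\le\overline w$, i.e.\ $\ln u\ge$ is bounded above by $-h(x)+c+Kt$ on $[0,\infty)$ away from a compact set, and by $Kt+C$ on the compact set. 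Since $h$ is bounded there, this is the upper half of the bound above.

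\emph{Lower barrier.} Here $u_0$ may vanish on $(-R_0,R_0)$, which is the only real obstacle. I would handle it by a time shift. By (F2) we have $f\ge0$, so $u$ is a supersolution of $v_t=(av_x)_x+bv_x$. The strong maximum principle together with Harnack-type estimates gives $u(1,\cdot)\ge\eta>0$ on $[-R_0-1,R_0+1]$. On $I_{R_0}$, comparing with the linear problem started from $\mathds 1_{I_{R_0}}e^{-h}$ should give $u(1,x)\ge c_1e^{-h(x)}$. For this I would again use a subsolution of the form $e^{-\tilde h_1(x)-Kt}$ multiplied by a cutoff, or simply the Gaussian lower bound for the heat kernel of this uniformly parabolic operator, exploiting that $h$ is Lipschitz so $e^{-h}$ varies by a bounded factor on unit intervals.

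With $u(1,\cdot)\ge c_2e^{-\tilde h_1}$ on $\mathbb R$ in hand, the function $\underline w=c_2e^{-\tilde h_1(x)-K(t-1)}$ with $K\ge M$ is a subsolution for $t\ge1$. Hence $\ln u(t,x)\ge -h(x)-Kt-C$ for $t\ge1$, $x\ge0$. For $t\in[0,1]$ I use the same comparison applied directly on $I_{R_0}$; only points with small $s=\epsilon t$ are affected, and the estimate $\ln u(t,x)\ge -h(x)-C$ for $t\le1$ and $x\ge R_0+1$ also follows from the heat-kernel bound.

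The part I expect to be most delicate is this short-time positivity argument near the region where $u_0$ may vanish. All remaining steps are the routine comparisons described above combined with the scaling identity.
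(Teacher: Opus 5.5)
Your argument is correct in substance, but it takes a different route from the paper.

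\textbf{The paper's route.} The paper never leaves the half-line. After the translation \eqref{eq:h} it uses the barriers $\min\{1,\overline c\,e^{-h(x)+\overline c t}\}$ and $\underline c^{-1}e^{-h(x)-\underline c t}$ directly on $[0,\infty)$, where $h$ is already $C^2$ with $h\ge0$. It closes the comparison through the lateral boundary $x=0$:
\begin{itemize}
\item $u\le 1$ gives the upper inequality there;
\item $\inf_{t\ge0}u(t,0)>0$ gives the lower one once $\underline c$ is large. This comes from positivity of $u(t,0)$ together with the hair-trigger convergence $u(t,0)\to1$, which uses (\textbf{A2}) and \cite{liang2021propagation}.
\end{itemize}
Because $x=0$ lies in the region where $u_0=e^{-h}>0$, the paper needs no extension of $h$, no time shift and no short-time analysis.

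\textbf{Your route.} You work on all of $\mathbb R$. This forces you to extend $h$ with bounded derivatives and to handle the possible vanishing of $u_0$ on $(-R_0,R_0)$ by a Gaussian lower bound for $t\le 1$.
\begin{itemize}
\item What you gain: the proof uses only $0\le f(x,s)\le \partial_sf(x,0)s$ and no information on the long-time behaviour of $u$.
\item What you lose: it relies on the problem being posed on the whole line. The paper's scheme needs control of $u$ at a single boundary point, which is the form reused for the half-line problems in Theorem~\ref{the:2} and Corollary~\ref{cor:one side}.
\end{itemize}

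\textbf{Two points to tighten.}
\begin{itemize}
\item The limit $(s,y)\to(0,0)$, needed for $Z_*(0,0)=0$, requires the short-time bound $u(t,x)\ge c\,e^{-h(x)}$ also for $R_0\le x\le R_0+1$ in the original variable. Your ``same comparison applied directly on $I_{R_0}$'' leaves this vague. Your heat-kernel argument does give it if you integrate over the one-sided interval $[x,x+\sqrt t]\subset I_{R_0}$ instead of a symmetric one. This yields $u(t,x)\ge c_1e^{-c_2}e^{-h(x)-\|h'\|_\infty}$ uniformly for $t\in(0,1]$ and $x\ge R_0$.
\item Take $c_2$ small enough that $\underline w\le 1$. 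Otherwise you cannot use $f(x,\underline w)\ge0$ to conclude that $\underline w$ is a subsolution of the nonlinear equation.
\end{itemize}
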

	 
	 \begin{proof}
	 	Define $\overline{u}(t,x) =\min\{1, \overline{c}\exp(-h(x) + \overline{c}t)\}$ and $\underline{u}(t,x) = \underline{c}^{-1}\exp(-h(x) - \underline{c}t)$,
	 	where
	 	\begin{equation*}
	 		\overline{c} \geq \max\big\{1, \sup_{x \geq 0}\{a(x)(h'(x))^2 - a(x)h''(x) - (b(x)+a^\prime(x))h'(x) + \partial_sf(x,0)\}\big\}
	 	\end{equation*}
	 	and
	 	\begin{equation*}
	 		-\underline{c} \leq \min\big\{-1, \inf_{x \geq 0}\{a(x)(h'(x))^2 - a(x)h''(x) - (b(x)+a^\prime(x))h'(x)\big\}
	 	\end{equation*}
	 	are constants to be determined later.
	 	Then we  deduce that $\overline{u}$ satisfies
	 	\begin{equation}\label{supersolution}
	 		\begin{aligned}
	 			\partial_t \overline{u}
	 			&= \overline{c}\overline{u}
	 			\geq (a(x)(h'(x))^2 - a(x)h''(x) - (b(x)+a^\prime(x))h'(x) +  \partial_sf(x,0))\overline{u} \\
	 			&= \partial_x(a(x)\partial_{x}\overline{u}) + b(x)\partial_x \overline{u} +  \partial_sf(x,0)\overline{u} \\
	 			&\geq \partial_x(a(x)\partial_{x}\overline{u}) + b(x)\partial_x\overline{u} + f(x,\overline{u})
	 		\end{aligned}
	 	\end{equation}
	 	in $(0,\infty) \times (0,\infty)$ 
	 	with $\overline{u}(0,x) = \overline{c}e^{-h(x)}$ for any $x \geq 0$,
	  and	$\underline{u}$ satisfies
	 	\begin{equation}\label{subsolution}
	 		\begin{aligned}
	 			\partial_t \underline{u}
	 			&= -\underline{c}\underline{u}
	 			\leq (a(x)(h'(x))^2 - a(x)h''(x) - (b(x)+a^\prime(x))h'(x))\underline{u} \\
	 			&\leq (a(x)(h'(x))^2 - a(x)h''(x) - (b(x)+a^\prime(x))h'(x))\underline{u} + f(x,\underline{u}) \\
	 			&= \partial_x(a(x)\partial_{x}\underline{u}) + b(x)\partial_x\underline{u} + f(x,\underline{u})
	 		\end{aligned}
	 	\end{equation}
	 	in $(0,\infty) \times (0,\infty)$ with $\underline{u}(0,x) = \underline{c}^{-1}e^{-h(x)}$ for any $x \geq 0$.
	 	Here, we have used the fact that $\underline{u} \in [0,1]$ in the second inequality of \eqref{subsolution}.
	 	
	 	Note that  $\lim\limits_{t \to \infty}u(t,0) = 1$.
	 	Therefore, we can take $\overline{c}$ and $\underline{c}$ large enough such that
	 	\begin{equation*}
	 		\underline{u}(t,0) \leq u(t,0) \leq \overline{u}(t,0), \quad \forall t \geq 0.
	 	\end{equation*}
	 	By comparison principle,
	 	we obtain
	 	\begin{equation*}
	 		\underline{u}(t,x) \leq u(t,x) \leq \overline{u}(t,x), 
	 		\quad \forall t \geq 0 \hbox{ and } x \geq 0.
	 	\end{equation*}
	 	Thus we conclude that
	 	\begin{equation*}
	 		Z_\epsilon(t,x) 
	 		= \epsilon \ln u\left(\frac{t}{\epsilon},\psi_\epsilon(x)\right)
	 		\geq \epsilon \ln \underline{u}\left(\frac{t}{\epsilon},\psi_\epsilon(x)\right) 
	 		=-\epsilon \ln \underline{c} + h(0) - h(x) - \epsilon h(0) - \underline{c}t
	 	\end{equation*}
	 	and
	 	\begin{equation*}
	 		Z_\epsilon(t,x) 
	 		= \epsilon \ln u\left(\frac{t}{\epsilon},\psi_\epsilon(x)\right)
	 		\leq \epsilon \ln \overline{u}\left(\frac{t}{\epsilon},\psi_\epsilon(x)\right) 
	 		=\epsilon \ln \overline{c} + h(0) - h(x) - \epsilon h(0) + \overline{c}t
	 	\end{equation*}
	 	for all $t \geq 0 $ and $x \geq 0$,
	 	which yields that $Z^*$ and $Z_*$ are well-defined and $Z^*(0,x) = Z_*(0,x) = h(0) - h(x)$ for all $x \geq 0$.
	 \end{proof}
	 
	 Denote 
	 $$LSC(\overline{\mathbb R_+^2}):=\{u \text{ is low semi-continuous function on }\overline{\mathbb R_+^2}\},$$
	 $$USC(\overline{\mathbb R_+^2}):=\{u \text{ is upper semi-continuous function on }\overline{\mathbb R_+^2}\}.$$
	 Then it is easy to check by the definition that $Z_*\in LSC(\overline{\mathbb R_+^2})$ and $Z^*\in USC(\overline{\mathbb R_+^2})$.	The main result of this section is
	 \begin{theorem}\label{thm:homo- eq}
	 		$Z^*$ and $Z_*$ are viscosity subsolution and supersolution to
	 	\begin{equation}\label{eq:hj eq}
	 		\begin{cases}
	 			\max\{\partial_t Z - \lambda(\frac{p^+_0}{h'(x)}\partial_x Z),Z\} = 0, \quad &t > 0, x > 0 \\
	 			Z(t,0) = 0, \quad &t > 0, \\
	 			Z(0,x) = h(0) - h(x), \quad &x \geq 0,
	 		\end{cases}
	 	\end{equation}
	 	respectively.
	  \end{theorem}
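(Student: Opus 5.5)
We follow the half-relaxed limit method of Barles–Perthame, as in Evans–Souganidis, combined with a perturbed test function built from the corrector $u_{\kappa,p}$ of \eqref{eq:key eq}.

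\textbf{Key scaling observation.} Differentiating \eqref{eq:def of psi_varep} gives
\[
\epsilon\psi_\epsilon'(x)=\frac{h'(x)}{h'(\psi_\epsilon(x))}.
\]
As $\epsilon\to0$ we have $\psi_\epsilon(x)\to+\infty$ locally uniformly on $x>0$. Hence $h'(\psi_\epsilon(x))\to p_0^+$ and
\[
\frac{1}{\epsilon\psi_\epsilon'}\to \frac{p_0^+}{h'(x)} .
\]
Moreover $\psi_\epsilon'\to\infty$ and $\psi_\epsilon''/(\psi_\epsilon')^2\to0$, the latter because $h''\to0$. Consequently, in \eqref{eq:z_varep}:
\begin{itemize}
\item the diffusion term is $O(\epsilon)$ relative to the $x$-scale of $\psi_\epsilon$;
\item the Hamiltonian term reads $a(\psi_\epsilon)\big(q_\epsilon\,\partial_xZ\big)^2+b(\psi_\epsilon)\,q_\epsilon\,\partial_xZ$, with $q_\epsilon:=\frac{1}{\epsilon\psi_\epsilon'}\to p:=\frac{p_0^+}{h'(x)}\partial_xZ/\partial_xZ$;
\item the oscillations live on the fast variable $y=\psi_\epsilon(x)$.
\end{itemize}
The bounds of Lemma \ref{lem:well defined and initial value} give local boundedness of $Z_\epsilon$ and the initial condition. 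The inequality $Z\le0$ is immediate since $u\le1$.

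\textbf{Subsolution property of $Z^*$ in the interior.} Let $\varphi$ be smooth and let $Z^*-\varphi$ have a strict local maximum at $(t_0,x_0)$ with $x_0>0$ and $Z^*(t_0,x_0)<0$ (the case $Z^*=0$ is trivially fine). Set $p=\frac{p_0^+}{h'(x_0)}\partial_x\varphi(t_0,x_0)$ and use the perturbed test function
\[
\varphi_\epsilon(t,x)=\varphi(t,x)+\epsilon\,u_{\kappa,p}(\psi_\epsilon(x))-\epsilon\,\kappa\, u_{\kappa,p}\cdot 0 .
\]
Since $\kappa u_{\kappa,p}$ is bounded, $\epsilon u_{\kappa,p}=O(\epsilon/\kappa)\to0$ for fixed $\kappa$, so $Z_\epsilon-\varphi_\epsilon$ has maxima at points $(t_\epsilon,x_\epsilon)\to(t_0,x_0)$. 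Plugging $\varphi_\epsilon$ into \eqref{eq:z_varep}:
\begin{itemize}
\item the $x$-derivative of the corrector is $\epsilon\psi_\epsilon' u_{\kappa,p}'(\psi_\epsilon)$, which is $O(1)$ in the fast variable, so the combination of diffusion, quadratic and drift terms reproduces $e^{-u}L_p e^{u}$ evaluated at $y=\psi_\epsilon(x_\epsilon)$, up to errors that vanish as $\epsilon\to0$;
\item since $Z^*<0$, we have $u_\epsilon\to0$ exponentially, so by (F3) $f(\psi_\epsilon,u_\epsilon)/u_\epsilon\to \partial_sf(\psi_\epsilon,0)=c$;
\item by \eqref{eq:key eq}, the main terms equal $\kappa u_{\kappa,p}(\psi_\epsilon(x_\epsilon))$, which lies within $o_\kappa(1)$ of $\lambda(p)$ by \eqref{eq:conv of test-func}.
\end{itemize}
This yields $\partial_t\varphi\le\lambda(p)+o_\kappa(1)$; letting $\kappa\to0$ gives the subsolution inequality. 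The dependence of $p$ on $x$ through $h'(x)$ and $q_\epsilon$ is harmless because of continuity and the locally uniform convergence $q_\epsilon\to p_0^+/h'$. When $p_0^+=0$, $p=0$ and the corrector is $u_{\kappa,0}$.

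\textbf{Supersolution property of $Z_*$.} This is symmetric, with one extra case. Where $Z_*<0$ the same argument gives $\partial_t\varphi\ge\lambda(p)$. Where $Z_*=0$, the max-condition holds automatically, since the required inequality is $\max\{\dots\}\ge0$.

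\textbf{Boundary condition.} At $x=0$ the condition is understood in the viscosity (generalized) sense. Since $u(t/\epsilon,0)\to1$, we get $Z_\epsilon(t,0)\to0$ for $t>0$. For $Z_*$ we need $Z_*(t,0)\ge0$, i.e. $Z_*(t,0)=0$; the lower bound must come from a positive lower bound of $u$ near $0$ for large times, which holds by (A2) and the hair-trigger effect.

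\textbf{Main obstacle.} The hardest step is controlling the fast/slow mismatch. The corrector lives in the variable $y=\psi_\epsilon(x)$, while $\psi_\epsilon'$ varies with $x$. One must check that the errors from $\psi_\epsilon''$ and from $h'(\psi_\epsilon)-p_0^+$ are uniformly small near $x_0$; this is where $h''\to0$ and \eqref{eq:intial data} are used. A second delicate point is applying the inequality at the maximizer with $\kappa$ fixed before sending $\epsilon\to0$: I would choose the order $\epsilon\to0$ first, then $\kappa\to0$.
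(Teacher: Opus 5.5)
Your interior argument is essentially the paper's: a perturbed test function $\epsilon\ln\varphi(\psi_\epsilon(x))$ with $\varphi$ an approximate eigenfunction of $L_p$. Your choice $\varphi=e^{u_{\kappa,p}}$ is a good one, since boundedness of $u_{\kappa,p}$ for fixed $\kappa$ makes the perturbation uniformly $O(\epsilon/\kappa)$.

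The genuine gap is the boundary condition $Z_*(t,0)=0$. You correctly identify it as what is needed, but you justify it only by a lower bound on $u$ ``near $0$''. The relaxed limit at $(t,0)$ runs over $(s,y)\to(t,0)$ with $y>0$, and $Z_\epsilon(s,y)=\epsilon\ln u(s/\epsilon,\psi_\epsilon(y))$. For fixed $\eta>0$, the interval $[0,\psi_\epsilon(\eta)]$ has length $\psi_\epsilon(\eta)=h^{-1}\big((h(\eta)-h(0))/\epsilon+h(0)\big)$. This is of order $1/\epsilon$ if $p_0^+>0$, and superlinear in $1/\epsilon$ if $p_0^+=0$. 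So you need $\epsilon\ln u(s/\epsilon,x)\to0$ uniformly on these growing intervals. The hair-trigger effect (locally uniform convergence to $1$) says nothing there, and Lemma \ref{lem:well defined and initial value} only gives $Z_\epsilon\ge-\underline{c}\,s+o(1)$.

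For $p_0^+>0$, one needs the linear spreading bound $\liminf_{t\to\infty}\inf_{|x|\le c_0t}u(t,x)>0$. This comes from (\textbf{A2}) applied to a compactly supported datum below $u_0$, combined with $\psi_\epsilon(y)\le (h(y)-h(0))/(m\epsilon)$, where $m=\inf h'>0$.

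For $p_0^+=0$ even this is insufficient, because $\epsilon\psi_\epsilon(y)\to\infty$ for every fixed $y>0$. One needs $u\ge\eta_0$ on $[0,h^{-1}(\delta t+C)]$ for large $t$. The paper obtains this from a tail-adapted subsolution that glues a small eigenfunction plateau $\eta\varphi_L$ to $e^{\delta t-h(x)}-Me^{2(\delta t-h(x))}$. It then uses $(h(y)-h(0))/s<\delta$ for $(s,y)$ near $(t,0)$. This construction is the missing idea; without it the Dirichlet condition is unproved.

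Two smaller points. First, for the subsolution, the case $Z^*(t_0,x_0)=0$ is not ``trivially fine'': $\max\{\partial_t\varphi-\lambda(p),Z^*\}\le0$ still requires $\partial_t\varphi\le\lambda(p)$. Your computation covers it anyway, because for the subsolution inequality you only need $f(x,u)/u\le\partial_sf(x,0)$ from (\textbf{F2}), not $u_\epsilon\to0$. Second, the claim $\psi_\epsilon''/(\psi_\epsilon')^2\to0$ is false in general when $p_0^+=0$. It contains $-h''(\psi_\epsilon)/h'(\psi_\epsilon)$, which keeps oscillating for, e.g., $h'(x)=(2+\sin x)/x$. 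This is harmless: the terms $a(\psi_\epsilon)\frac{\psi_\epsilon''}{(\psi_\epsilon')^2}u_{\kappa,p}'(\psi_\epsilon)$ coming from the diffusion and from the drift correction in \eqref{eq:z_varep} cancel exactly. Only $\frac{\psi_\epsilon''}{\epsilon(\psi_\epsilon')^3}\partial_x\varphi$ survives, and it tends to $0$ by Proposition \ref{prop:psi}(iv).
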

	  Before proving this theorem, we give some properties of such rescaling $\psi_\epsilon$.
	 
	 \begin{proposition}\label{prop:psi}
	 	Assume that \eqref{eq:h} holds. Then one has\\
	 	(i) The function $\psi_\epsilon(\cdot)$ is increasing on $[0,+\infty)$.
	 	 Moreover,  for any $x\geq0$, one has $\psi_{\epsilon_1}(x)<\psi_{\epsilon_2}(x)$ if $\epsilon_1>\epsilon_2$.\\
	 	(ii) $\lim\limits_{\epsilon\to0}\psi_\epsilon(x) =+\infty$  uniformly for $x\in[a,+\infty)\subset(0,+\infty)$, 
	 	and $\lim\limits_{x\to+\infty}\psi_\epsilon(x)-x=+\infty$ uniformly for $\epsilon$ small, say, $\epsilon\in(0,1/2)$.\\
	 	(iii)	 $\lim\limits_{\epsilon\to0^+}\frac{h^\prime(x)}{\epsilon\psi_{\epsilon}^{\prime}(x)}=p^+_0$  uniformly for $x\in[a,+\infty)\subset(0,+\infty)$.
	 	Moreover, if $p^+_0=0$, then
	 	$$\lim\limits_{\epsilon\to0^+}\frac{1}{\epsilon\psi_{\epsilon}^{\prime}(x)}=0$$
	 	uniformly for $ x\in[a,b]\subset[0,+\infty)$.\\
	 	(iv) $ \lim\limits_{\epsilon\to0^+}\frac{\psi_{\epsilon}^{\prime\prime}(x)}{\epsilon(\psi_{\epsilon}^{\prime}(x))^3}=0$ $\text{ uniformly for } x\in[a,b]\subset(0,+\infty)$.
	 		Moreover, if $p^+_0>0$, then this limit holds uniformly for $x\in[a,+\infty)$.
	 	
	 \end{proposition}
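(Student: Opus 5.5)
The plan is to express every quantity through the relation between $x$ and $y=\psi_\epsilon(x)$. By \eqref{eq:def of psi_varep},
\[
h(\psi_\epsilon(x))=\frac{h(x)-h(0)}{\epsilon}+h(0).
\]
Since $h$ is $C^2$ with $h'>0$ on $[0,+\infty)$ by \eqref{eq:h}, and $h(x)\to+\infty$, the inverse $h^{-1}$ is an increasing $C^2$ bijection from $[h(0),+\infty)$ onto $[0,+\infty)$. So $\psi_\epsilon$ is well defined, $C^2$, and $\psi_\epsilon(0)=0$.

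\textbf{Part (i).} Both $x\mapsto (h(x)-h(0))/\epsilon+h(0)$ and $h^{-1}$ are increasing, so $\psi_\epsilon$ is increasing. For $x>0$ we have $h(x)-h(0)>0$, so $(h(x)-h(0))/\epsilon$ strictly decreases in $\epsilon$. Applying the increasing map $h^{-1}$ gives $\psi_{\epsilon_1}(x)<\psi_{\epsilon_2}(x)$ for $\epsilon_1>\epsilon_2$. At $x=0$ the two values coincide, since both equal $0$, so the strict inequality is meant for $x>0$.

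\textbf{Part (ii).} By monotonicity, $\psi_\epsilon(x)\ge\psi_\epsilon(a)=h^{-1}\big((h(a)-h(0))/\epsilon+h(0)\big)$ for $x\ge a>0$. The right-hand side tends to $+\infty$ as $\epsilon\to0$, which gives uniformity on $[a,+\infty)$. For the second claim, let $\epsilon\in(0,1/2)$. Part (i) gives $\psi_\epsilon(x)\ge \psi_{1/2}(x)=:y$, where $h(y)=2h(x)-h(0)$. Since $h'$ has a finite limit at $+\infty$ and is continuous, $M:=\sup_{[0,\infty)}h'<\infty$. The mean value theorem then gives
\[
y-x\ge \frac{h(y)-h(x)}{M}=\frac{h(x)-h(0)}{M}\to+\infty .
\]

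\textbf{Part (iii).} Differentiating the defining identity once gives the key formula
\[
\epsilon\,\psi_\epsilon'(x)=\frac{h'(x)}{h'(\psi_\epsilon(x))}.
\]
Hence $\frac{h'(x)}{\epsilon\psi_\epsilon'(x)}=h'(\psi_\epsilon(x))$. This tends to $p_0^+$ uniformly on $[a,+\infty)$, because $\psi_\epsilon\to+\infty$ uniformly there by (ii) and $h'(y)\to p_0^+$ as $y\to+\infty$. When $p_0^+=0$ we have
\[
\frac1{\epsilon\psi_\epsilon'}=\frac{h'(\psi_\epsilon)}{h'(x)}.
\]
On a compact interval the denominator is bounded below by a positive constant, and the numerator tends to $0$ wherever $\psi_\epsilon\to\infty$ uniformly, that is, on $[a,b]$ with $a>0$. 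At $x=0$ we have $\psi_\epsilon(0)=0$ and the ratio equals $1$. So I expect the statement must be read with $a>0$, or as uniform away from the endpoint, and I would check how it is used later.

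\textbf{Part (iv).} I differentiate once more: $h''(\psi_\epsilon)(\psi_\epsilon')^2+h'(\psi_\epsilon)\psi_\epsilon''=h''(x)/\epsilon$. Solving for $\psi_\epsilon''$ and substituting the formula for $\epsilon\psi_\epsilon'$ gives
\[
\frac{\psi_\epsilon''}{\epsilon(\psi_\epsilon')^3}
=\epsilon\,\frac{h''(x)\,h'(\psi_\epsilon)^2}{h'(x)^3}-\frac{h''(\psi_\epsilon)}{h'(x)}.
\]
On $[a,b]\subset(0,\infty)$ the following bounds hold:
\begin{itemize}
\item $h'(x)$ is bounded below by a positive constant;
\item $h'$ and $h''$ are bounded, since both are continuous with finite limits at $+\infty$;
\item $h''(\psi_\epsilon)\to0$ uniformly, since $\psi_\epsilon\to\infty$ uniformly and $h''(y)\to0$ as $y\to+\infty$.
\end{itemize}
Together with the factor $\epsilon$ in the first term, this makes both terms tend to $0$. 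If $p_0^+>0$, then $\inf_{[a,\infty)}h'>0$, because $h'>0$ and $h'\to p_0^+>0$. The same estimate then holds uniformly on $[a,+\infty)$.

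The main obstacle is only bookkeeping of uniformity near $x=0$ and at infinity; the computations themselves are direct.
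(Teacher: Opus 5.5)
Your proof is correct and follows the paper's argument essentially step for step. The identity $h'(\psi_\epsilon)\psi_\epsilon' = h'(x)/\epsilon$ and its derivative drive (iii)--(iv), and your mean-value bound with $\sup h'$ in (ii) is a slightly cleaner form of the paper's estimate $\int_{h(x)}^{3h(x)/2}(h^{-1})'(s)\,ds\to+\infty$. Your two caveats are also right: since $\psi_\epsilon(0)=0$, the strict inequality in (i) and the $p_0^+=0$ limit in (iii) fail at $x=0$, so both must be read for $x>0$ (resp.\ $a>0$), which is all the paper actually uses later.
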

	 \begin{proof}
	 	(i) can be obtained  by the definition of $\psi_\epsilon$.
	 	
	 	Proof of (ii): 
	 	Since $\psi_\epsilon$ is positive and increasing on $[0,+\infty)$. Then by the monotonicity of $h$, we have 
	 	$$\psi_{\epsilon}(x)=h^{-1}\left(\frac{h(x)-h(0)}{\epsilon}+h(0)\right)\geq h^{-1}\left(\frac{h(a)-h(0)}{\epsilon}+h(0)\right)\to +\infty \text{ for any }  x\geq a.$$
	 
	 	Let us now prove that $\lim\limits_{x\to\infty}\psi_\epsilon(x)-x=+\infty$ uniformly with respect to $\epsilon\in(0,1/2)$. 
	 	In fact, since  $h(x)/2>h(0)$ for $x$ large enough, $h^{-1}$ is increasing, and $\left(h^{-1}(x)\right)^\prime\to1/p^+_0\in(0,+\infty]$ as $x\to+\infty$, we have
	 	\begin{equation*}
	 		\begin{aligned}
	 			\psi_{\epsilon}(x)-x
	 			&=h^{-1}\left(\frac{h(x)-h(0)}{\epsilon}+h(0)\right)-x\geq h^{-1}\left(2{h(x)-h(0)}\right)-h^{-1}(h(x))\\
	 			&\geq h^{-1}\left(3h(x)/2\right)-h^{-1}(h(x))=\int_{h(x)}^{3h(x)/2}\left(h^{-1}(s)\right)^\prime ds\to+\infty\ \text{ as }x\to+\infty.
	 		\end{aligned}
	 	\end{equation*}
	 	 	
	 	Proof of (iii):	 	From $h(\psi_{\epsilon}(x))=\frac{h(x)-h(0)}{\epsilon}+h(0)$, we have
	 	\begin{equation}\label{eq:diff wrt x}
	 		h^\prime(\psi_\epsilon(x))\psi_\epsilon^{\prime}(x)=\frac{h^\prime(x)}{\epsilon}
	 	\end{equation}
	 	Hence  $\lim\limits_{\epsilon\to0}\frac{h^\prime(x)}{\epsilon\psi_\epsilon^{\prime}(x)}=\lim\limits_{\epsilon\to0}h^\prime(\psi_\epsilon(x))\to p^+_0$ uniformly with respect to $x\in[a,+\infty)\subset(0,+\infty)$. 
	 	If $p^+_0=0$, then 
	 	$$\left|\frac{1}{\epsilon\psi_\epsilon^{\prime}(x)}\right|=\left|\frac{h^\prime(\psi_\epsilon(x))}{h^\prime(x)}\right|\leq\left|\frac{h^\prime(\psi_\epsilon(x))}{\inf\limits_{x\in[a,b]}h^\prime(x)}\right|\to 0$$ 
	 	as $\epsilon\to0$ uniformly with respect to $x\in[a,b]\subset[0,+\infty)$.
	 	
	 	Proof of (iv) : 	 	Differentiating both sides of \eqref{eq:diff wrt x} in $x$ yields that 
	 	$$h^{\prime\prime}(\psi_\epsilon(x))(\psi^{\prime}_\epsilon(x))^2+h^\prime(\psi_\epsilon(x))\psi_\epsilon^{\prime\prime}(x)=\frac{h^{\prime\prime}(x)}{\epsilon}.$$
	 	Therefore,  using \eqref{eq:diff wrt x}, one has 
	 	\begin{equation*}
	 		\begin{aligned}
	 			\left|\frac{\psi_\epsilon^{\prime\prime}(x)}{\epsilon(\psi^{\prime}_\epsilon(x))^3}\right|
	 			&=\left|\frac{h^{\prime\prime}(x)/h^\prime(x)}{\epsilon(\psi^{\prime}_\epsilon(x))^2}-\frac{h^{\prime\prime}(\psi_\epsilon(x))}{h^\prime(x)}	\right|\\
	 			&\leq\frac{\epsilon|h^{\prime\prime}(x)||h^\prime(\psi_\epsilon^{\prime}(x))|^2}{\inf\limits_{x\in[a,b]}|h^\prime(x)|^3}+\frac{|h^{\prime\prime}(\psi_\epsilon(x))|}{\inf\limits_{x\in[a,b]}|h^\prime(x)|}\to0\\
	 		\end{aligned}
	 	\end{equation*}
	 	as $\epsilon\to0$ uniformly with respect to $x\in[a,b]\subset[0,+\infty)$. 
	 	
	 	If $p^+_0>0$, then it is easy to see that $\inf\limits_{x\in[0,+\infty)} h^\prime(x)>0$. Therefore,
	 	\begin{equation*}
	 		\begin{aligned}
	 			\left|\frac{\psi_\epsilon^{\prime\prime}(x)}{\epsilon(\psi^{\prime}_\epsilon(x))^3}\right|
	 			\leq\frac{\sup\limits_{x\in[a,+\infty)}\epsilon|h^{\prime\prime}(x)||h^\prime(\psi_\epsilon^{\prime}(x))|^2}{\inf\limits_{x\in[a,+\infty)}(h^\prime(x))^3}+\frac{|h^{\prime\prime}(\psi_\epsilon(x))|}{\inf\limits_{x\in[a,+\infty)}h^\prime(x)}\to0,
	 		\end{aligned}
	 	\end{equation*}
	 	which yields that  the limit in (iv) holds uniformly with respect to $x\in(a,+\infty)$.
	 \end{proof}
	 
	  Let us now prove Theorem \ref{thm:homo- eq}. 
	 \begin{proof}[Proof of Theorem  \ref{thm:homo- eq}]
	 	The  initial condition follows from Lemma \ref{lem:well defined and initial value}. The proof is divided into two steps:
	 	
	 	Step 1: Prove that $Z_*$ and $ Z^*$ satisfy the boundary condition.\\
	 	If $p^+_0 > 0$,
	 	then there exists $m > 0$ such that $h' > m$ for all $x > 0$.
	 	Thus we have 
	 	$(h^{-1})'(x) = (h'(h^{-1}(x)))^{-1} < m^{-1}$ 
	 	for all $x > h(0)$,
	 	which implies that
	 	\begin{equation*}
	 		\psi_\epsilon(x) < \frac{h(x) - h(0)}{m} \cdot \frac{1}{\epsilon} \text{ for all $x > 0$.}
	 	\end{equation*}
	 	On the other hand, taking a solution $\tilde u$ of \eqref{eq:main eq} with compact supported initial data smaller than $e^{-h(x)}$, then by \cite[Theorem 1.2]{liang2021propagation} and \textbf{(A2)},  	there exists $c_0 > 0$ such that
	 	\begin{equation*}
	 	1=\lim\limits_{t \to +\infty} \inf_{|x| \leq c_0t} \tilde u(t,x) \leq\lim\limits_{t \to +\infty} \inf_{|x| \leq c_0t} u(t,x) \leq 1.
	 	\end{equation*}
	 	Fixed $t_0 > 0$,
	 	for any sequence $(s_n,y_n) \in (0,+\infty) \times (0,+\infty) \to (t_0,0)$ as $n \to \infty$, 
	 	one has 
	 	\begin{equation*}
	 		\epsilon \psi_\epsilon(y_n) < \frac{h(y_n) - h(0)}{m}  < c_0s_n
	 	\end{equation*}
	 	when $n$ is large.
	 	This yields that
	 	\begin{equation*}
	 		0 \geq Z_\epsilon(s_n,y_n) 
	 		= \epsilon \ln u (\frac{s_n}{\epsilon}, \frac{\epsilon\psi_\epsilon(y_n)}{\epsilon})
	 		\geq \epsilon \ln \inf_{|x| \leq c_0s_n/\epsilon}  u(s_n/\epsilon,x)
	 		\to 0 
	 		\quad \hbox{ as } \epsilon \to 0^+.  
	 	\end{equation*}
	 	Thus $Z^*(t,0) = Z_*(t,0) = 0$.
	 	
	 	Next, we prove the result for the case $p^+_0=0$.
	 	Note that $\inf\limits_{x}\partial_sf(x,0)> 0$. 
	 	Then  the principal eigenfunction $\phi_1$ to $(a\varphi')' + b\varphi' + (\partial_sf(x,0)/2)\varphi$ satisfies
	 	\begin{equation}\label{eq:ef on 01}
	 		\begin{cases}
	 			(a\varphi_L')' + b\varphi_L' + (\partial_sf(x,0)/2) \varphi_L \geq 0, \quad 0 < x < L \\
	 			\varphi_L(0) = \varphi_L(L) = 0, \\
	 			\varphi_L(x) > 0 \quad \text{ for } 0 < x < L,
	 		\end{cases}
	 	\end{equation}
	 for sufficiently large $L$.	We also assume that $\varphi_L$ attains its local maximum at $x = x^*$ with value $\varphi_L(x^*) = 1$ by multiplying a constant. 
	 A direct computation shows that
	 	\begin{equation*}
	 		u_1(x) =
	 		\begin{cases}
	 			0, \quad &x \leq 0, \\
	 			\eta\varphi_L(x), \quad &0 \leq x \leq x^*, \\
	 			\eta, \quad &x \geq x^*,
	 		\end{cases}
	 	\end{equation*}
	 	is a subsolution for $\eta > 0$ sufficiently small,  	which will be determined later.

	  Now take 
	 	\begin{equation*}
	 		u_2(t,x) = e^{\delta t - h(x)} - M e^{2(\delta t - h(x))}, 
	 	\end{equation*}
	 	where $\delta$ and $M$ will be determined later.
	 	Then $u_2 > 0$ if and only if $h(x) > \delta t + \ln M$.
	 	Furthermore, $u_2 \leq (4M)^{-1}$, and	one has
	 	\begin{equation*}
	 		\begin{aligned}
	 			\quad&\partial_t u_2 - a\partial_{xx}u_2 - b \partial_x u_2 - f(x,u_2)\\
	 			=&(\delta - a(h')^2 + ah'' + (b+a^\prime)h' - f(x,u_2)/u_2)u_2- M(\delta - 3a(h')^2 + ah'' + (b+a^\prime)h')e^{2\delta t - 2h(x)}.
	 		\end{aligned}
	 	\end{equation*}
	 	Since $h' \to 0$ and $h'' \to 0$ as $x \to \infty$,
	 	there exists $R > 0$ such that for any $x > R$,
	 	\begin{equation*}
	 		a(x)h''(x) + (b(x)+a^\prime(x))h'(x) < \delta 
	 	 \hbox{ and } 
	 		3a(x)(h'(x))^2 - a(x)h''(x) - (b(x)+a^\prime(x))h'(x) < \delta.
	 	\end{equation*}
	 	Take $M$ large enough such that $M > \max\{e^{h(R)},e^{h(1)}\}>1$, then in the region where $u_2 > 0$, i.e., $\Omega:=\{(x,t): h(x) > \delta t + \ln M\}$, one has $x>\max\{R, 1\}>x^*$.
	 	On the other hand, there exists $s_1>0$ small, such that $\inf\limits_{s\in(0,s_1], x\in\RR}\frac{f(x,s)}{s}>0$ by \eqref{eq:f is c alpha}. 
	 	Therefore, enlarging $M$ if necessary, one has $\inf\limits_{\Omega}\frac{f(x,u_2(t,x))}{u_2(t,x)}>0$.
	 	To sum up, taking $M>0$ large and $\delta> 0$ such that $2\delta <\inf\limits_{\Omega}\frac{f(x,u_2(t,x))}{u_2(t,x)}$,  one has	 	
	 	\begin{equation*}
	 		\begin{aligned}
	 			\partial_t u_2 - \partial_x(a\partial_{x}u_2) - b \partial_x u_2 - f(x,u_2)
	 			&\leq (\delta + ah'' + (b+a^\prime)h' -  f(x,u_2)/u_2)u_2 \\
	 			&\leq  (2\delta - f(x,u_2)/u_2) u_2 \leq 0,
	 		\end{aligned}
	 	\end{equation*} 
	 	which implies that $u_2$ is a subsolution whenever $u_2 > 0$.
	 	Now take $\eta < (8M)^{-1}$ small and let
	 	\begin{equation*}
	 		M_\eta = \min\{s \ | \ e^{-s} - Me^{-2s} = \eta\}(>M>e^{h(0)}).
	 	\end{equation*}
	 	Define
	 	\begin{equation*}
	 		\underline{u}(t,x) = 
	 		\begin{cases}
	 			u_1(x), \quad &h(x) \leq \delta t + \ln M_\eta, \\
	 			u_2(t,x), \quad &h(x) \geq \delta t + \ln M_\eta.
	 		\end{cases}
	 	\end{equation*}
	 Note that $\underline{u}(0,x) \leq u_0(x)$ for $x\geq0$ and $\underline{u}(t,0)=0$ for $t>0$. Then $\underline{u}(t,x)$ is a subsolution of \eqref{equation1} on $\{x\geq0, t>0\}$  (see, for example, \cite{berestycki-lions sub-supersolu}). 
	 By comparison principle,
	 	we have
	 	\begin{equation*}
	 		\underline{u}(t,x) \leq u(t,x) \quad 
	 		\hbox{ for all } (t,x) \in (0,\infty) \times (0,\infty).
	 	\end{equation*}
	 	In particular,
	 	for all $x^* \leq x \leq h^{-1}(\delta t + \ln M_\eta)$,
	 	one has $u(t,x) \geq \eta$.
	 	Combining with the fact $\lim\limits_{t \to +\infty}u(t,0) = 1$ and Harnack's inequality,
	 	we conclude that there exists $\eta_0 > 0$ such that for all $0 \leq x \leq h^{-1}(\delta t + \ln M_\eta)$ with $t$ large enough,
	 	one has $u(t,x) \geq \eta_0$.
	 	Now for any fixed $t_0 > 0$ and any sequence $(s_n,y_n) \in (0,+\infty) \times (0,+\infty)$ converging to $ (t_0,0)$,
	 	one has
	 	\begin{equation*}
	 		\frac{h(y_n) - h(0)}{s_n} < \delta
	 	\end{equation*}
	 	when $n$ is large.
	 	This yields that
	 	\begin{equation*}
	 		\begin{aligned}
	 			0 
	 			\geq Z_\epsilon(s_n,y_n) 
	 			&= \epsilon \ln u\left(\frac{s_n}{\epsilon},h^{-1}\Big(\frac{h(y_n) - h(0)}{s_n} \cdot \frac{s_n}{\epsilon} + h(0)\Big)\right) \\
	 			&\geq \epsilon \ln \inf\limits_{0 \leq x \leq h^{-1}(\delta s_n/\epsilon + \ln M_\eta)} u\left(\frac{s_n}{\epsilon},x\right) 
	 			\geq \epsilon \ln \eta_0 \to 0
	 			\quad \hbox{ as } \epsilon \to 0^+.
	 		\end{aligned}
	 	\end{equation*}
	 	Thus $Z^*(t,0) = Z_*(t,0) = 0$.
	
	Step 2: Prove that $Z^*$ is a viscosity subsolution. The other case follows from a similar argument.\\	 	
	 	Fix a test function $\phi$ and assume that $Z^* - \phi$ admits a strict maximum at $(t_0,x_0) \in (0,\infty) \times (0,\infty)$ over the ball 
	 	$$\overline{B_r} := \{(t,x) \in (0,\infty) \times (0,\infty) : |t - t_0| + |x - x_0| \leq r\},$$
	 	with $Z^*(t_0,x_0) < 0$.
	 	Denote $p = \frac{p^+_0}{h'(x_0)}\partial_x \phi (t_0,x_0)$.
	 	For any $\delta > 0$,
	 	let $\varphi \in \mathcal A_{-\infty}$ such that $\Lcal_p \varphi \leq (\lambda(p) + \delta) \varphi$.
	 	Set $w_\epsilon(x) = \epsilon \ln \varphi(\psi_\epsilon(x))$,
	 	then $w_\epsilon$ satisfies
	 	\begin{equation}\label{delta error}
	 		\frac{(a(\psi_\epsilon)w_\epsilon')'}{\epsilon (\psi_\epsilon')^2}+a^\prime(\psi_\epsilon)p
	 		- \frac{a(\psi_\epsilon)\psi_\epsilon''}{\epsilon (\psi_\epsilon')^3}w_\epsilon'
	 		+ a(\psi_\epsilon)(\frac{w_\epsilon'}{\epsilon \psi_\epsilon'} + p)^2
	 		+ b(\psi_\epsilon)(\frac{w_\epsilon'}{\epsilon \psi_\epsilon'} + p)
	 		+ \partial_sf(\psi_\epsilon,0) \leq \lambda(p) + \delta.
	 	\end{equation}
	 	
	 	The definition of $Z^*$ yields that there exist a sequence of positive numbers $\{\epsilon_n\}_n$ and a sequence $\{(s_n,y_n)\}_n$ in $\overline{B_r}$ such that 
	 	$\epsilon_n \to 0$,
	 	$s_n \to t_0$,
	 	$y_n \to x_0$
	 	and $Z_{\epsilon_n}(s_n,y_n) \to Z^*(t_0,x_0)$ as $n \to \infty$.
	 	For all $n$, let $(t_n,x_n)$ in $\overline{B_r}$ such that the function
	 	\begin{equation*}
	 		Z_{\epsilon_n} - \phi - w_{\epsilon_n} \hbox{ reaches a maximum at } (t_n,x_n) \hbox{ over } \overline{B_r}.
	 	\end{equation*} 
	 	Since the sequence $\{(t_n,x_n)\}_n$ lies in $\overline{B_r}$,
	 	we may assume,  	up to extraction,
	 	that it converges in $\overline{B_r}$ to some point,
	 	call $(T_0,X_0)$.
	 	For all $n$ and for all $(t,x) \in \overline{B_r}$,
	 	one has
	 	\begin{equation*}
	 		Z_{\epsilon_n}(t,x) - \phi(t,x) - w_{\epsilon_n}
	 		\leq Z_{\epsilon_n}(t_n,x_n) - \phi(t_n,x_n) - w_{\epsilon_n}(x_n).
	 	\end{equation*}
	 	Taking $t = s_n, x = y_n$ and letting $n \to \infty$,
	 	the definition of $Z^*$ yields that
	 	\begin{equation*}
	 		Z^*(t_0,x_0) - \phi(t_0,x_0) \leq Z^*(T_0,X_0) - \phi(T_0,X_0).
	 	\end{equation*}
	 	Hence,
	 	as $Z^* - \phi$ reaches a strict local maximum at $(t_0,x_0)$ over the ball $\overline{B_r}$,
	 	one gets $(T_0,X_0) = (t_0,x_0)$.
	 	We have thus proved that
	  	\begin{equation*}
	 		\begin{cases}
	 			Z_{\epsilon_n}(t_n,x_n) \to Z^*(t_0,x_0),\\
	 			(t_n,x_n) \to (t_0,x_0) \hbox{ as } n \to +\infty, \\
	 			Z_{\epsilon_n} - \phi - w_{\epsilon_n} \hbox{ reaches a local maximum at } (t_n,x_n).
	 		\end{cases}
	 	\end{equation*}
	 	Now, for any $n \in \NN$,
	 	one has
	 	\begin{equation*}
	 		\begin{cases}
	 			\partial_t Z_{\epsilon_n}(t_n,x_n) 
	 			= \partial_t \phi(t_n,x_n),  \\
	 			\partial_x Z_{\epsilon_n}(t_n,x_n) 
	 			= \partial_x \phi(t_n,x_n) + w_{\epsilon_n}'(x_n), \\ 
	 			\partial_x^2 Z_{\epsilon_n}(t_n,x_n)
	 			\leq \partial_x^2 \phi(t_n,x_n) + w_{\epsilon_n}''(x_n).
	 		\end{cases}
	 	\end{equation*}
	 	Combining \eqref{eq:z_varep}, we get
	 	\begin{equation*}
	 		\begin{aligned}
	 			\partial_t \phi 
	 			&\leq \frac{\partial_x(a(\psi_{\epsilon_n})(\partial_x \phi + w_{\epsilon_n}'))}{\epsilon (\psi_{\epsilon_n}')^2}
	 			+ \frac{a(\psi_{\epsilon_n})}{(\epsilon_n\psi_{\epsilon_n}')^2}(\partial_x\phi + w_{\epsilon_n}')^2 \\
	 			&\quad + \left(\frac{b(\psi_{\epsilon_n})}{\epsilon_n\psi_{\epsilon_n}'} - \frac{a(\psi_{\epsilon_n})\psi_{\epsilon_n}''}{\epsilon_n(\psi_{\epsilon_n}')^3}\right)(\partial_x\phi + w_{\epsilon_n}') + f(\psi_{\epsilon_n},u_{\epsilon_n})/u_{\epsilon_n},
	 		\end{aligned}
	 	\end{equation*}
	 	at $(t_n, x_n)$.
	 	Using \eqref{delta error}, we get 
	 	\begin{equation}\label{mid_1}
	 		\begin{aligned}
	 			\partial_t \phi 
	 			&\leq \frac{a(\psi_{\epsilon_n})}{\epsilon_n(\psi_{\epsilon_n}')^2}\partial_{xx}\phi
	 			+ a(\psi_{\epsilon_n})(\frac{\partial_x\phi}{\epsilon_n\psi_{\epsilon_n}'}-p)^2
	 			+2a(\psi_{\epsilon_n})(p+\frac{w_{\epsilon_n}'}{\epsilon_n\psi_{\epsilon_n}'})(\frac{\partial_x\phi}{\epsilon_n\psi_{\epsilon_n}'} - p)+ \lambda(p) + \delta  \\
	 			&\quad + (b(\psi_{\epsilon_n})+a^\prime(\psi_{\epsilon_n}))(\frac{\partial_x\phi}{\epsilon_n\psi_{\epsilon_n}'}-p)
	 			- \frac{a(\psi_{\epsilon_n})\psi_{\epsilon_n}''}{\epsilon_n(\psi_{\epsilon_n}')^3}\partial_{x}\phi
	 			+\frac{f(\psi_{\epsilon_n},u_{\epsilon_n})}{u_{\epsilon_n}}-\partial_sf(\psi_{\epsilon_n},0)	
	 		\end{aligned}
	 	\end{equation}
	 	at $(t_n, x_n)$.
	 	Note that $x_n\in(x_0/2,3x_0/2)$ for $n$ large. 
	 	Therefore, one has
	 	\begin{equation*}
	 		\begin{cases}
	 		 \lim\limits_{n\to+\infty}\frac{1}{\epsilon_n(\psi_{\epsilon_n}^{\prime}(x_n))^2}=\lim\limits_{n\to+\infty}\left(\frac{h^{\prime}(x)}{\epsilon_n\psi_{\epsilon_n}^{\prime}(x_n)}\right)^2\frac{\epsilon_n}{h^{\prime}(x_n)}=0,\\
	 			 \lim\limits_{n\to+\infty}\frac{\partial_x\phi(t_n,x_n)}{\epsilon_n\psi_{\epsilon_n}^{\prime}(x_n)}= \lim\limits_{n\to+\infty}\frac{h^\prime(x_n)}{\epsilon_n\psi_{\epsilon_n}^{\prime}(x_n)}\frac{\partial_x\phi(t_n,x_n)}{h^\prime(x_n)}=\frac{p^+_0}{h^\prime(x_0)}\partial_x\phi(t_0,x_0)= p,
	 		\end{cases}
	 	\end{equation*} 
	 	by (iii)  of Proposition \ref{prop:psi}, and
	 	\begin{equation}\label{eq:to0}
	 		 \frac{\psi_{\epsilon_n}^{\prime\prime}(x_n)}{\epsilon_n(\psi_{\epsilon_n}^{\prime}(x_n))^3}\to0\text{ as }n\to\infty
	 	\end{equation}
	 	by  (iv) of Proposition \ref{prop:psi}. 
	 	We deduce that, 	 	as $n \to +\infty$,
	 	\eqref{mid_1} becomes
	 	\begin{equation*}
	 		\partial_t\phi(t_0,x_0) \leq \lambda(p) + \delta.
	 	\end{equation*}
	 	Letting $\delta \to 0$,
	 	one has
	 	\begin{equation*}
	 		\max\left\{\partial_t Z^* - \lambda\Big(\frac{p^+_0}{h'(x)}\partial_x Z^*\Big), Z^*\right\} \leq 0
	 	\end{equation*}
	 	in viscosity sense.
	 \end{proof}

	\begin{remark}
		Indeed, it follows from the first step of the proof that for $c_0$ sufficiently small, 
		\begin{equation}
			\begin{cases}
			Z^*(t,x) = Z_*(t,x) = 0, \text{ for }|x|\leq c_0 t \text{ if } p_0^+>0;\\
			Z^*(t,x) = Z_*(t,x) = 0, \text{ for }|x|\leq h^{-1}(c_0 t+h(0))\text{ if } p_0^+=0.
			\end{cases}
		\end{equation}
	\end{remark}

	\section{Solving the Hamilton-Jacobi equation}

	In this section we will solve the Hamilton-Jacobi equation \eqref{eq:hj eq}. Recall that $h$ satisfies \eqref{eq:h}.
    When $p^+_0>0$, we define a function $l(x)$ on $[0,+\infty)$ by 
\begin{equation}
	\begin{cases}
		h(l(x))=p_0^+x+h(0),\\
		l(0)=0.
	\end{cases}
\end{equation}
 Then it is easy to check that $l\in C^1([0,+\infty))$ and
\begin{equation}
	 l^\prime(x)=\frac{p^+_0}{h^\prime(l(x))}>0, \ \forall x>0.
\end{equation}

\begin{lemma}\label{lem:tran}
   Assume that $p^+_0>0$.	Let $\zeta_*(t,x)=Z_*(t,l(x))$ and $\zeta^*(t,x)=Z^*(t,l(x))$. Then $\zeta_*$ (resp. $\zeta^*$) is the viscosity super- (resp. sub-) solution  to
	\begin{equation}\label{H_J}
			\begin{cases}
			\max\{\partial_t \zeta - \lambda(\partial_x \zeta),\zeta\} = 0, \quad &t > 0, x > 0 \\
			\zeta(t,0) = 0, \quad &t > 0, \\
			\zeta(0,x) = -p_0^+x, \quad &x \geq 0.
		\end{cases}
	\end{equation}
\end{lemma}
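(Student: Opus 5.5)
The lemma is a change of variables for viscosity solutions. We transport the statement of Theorem \ref{thm:homo- eq} through the $C^1$ diffeomorphism $l:[0,+\infty)\to[0,+\infty)$. First we record the basic properties of $l$. Since $h$ is increasing with $h(x)\to+\infty$, the map $l=h^{-1}(p_0^+\,\cdot+h(0))$ is a strictly increasing bijection of $[0,+\infty)$ onto itself with $l(0)=0$. Moreover $l'(x)=p_0^+/h'(l(x))>0$ is continuous, and its inverse $l^{-1}(y)=(h(y)-h(0))/p_0^+$ is $C^2$. Hence $\zeta_*=Z_*(t,l(x))$ is lower semicontinuous and $\zeta^*$ is upper semicontinuous. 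The boundary and initial conditions follow at once from Theorem \ref{thm:homo- eq}:
\[
\zeta(t,0)=Z(t,l(0))=Z(t,0)=0,\qquad \zeta(0,x)=h(0)-h(l(x))=-p_0^+x .
\]

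For the interior inequality, take the subsolution case. Let $\phi$ be a $C^1$ test function such that $\zeta^*-\phi$ has a strict local maximum at $(t_0,x_0)$ with $t_0,x_0>0$. We may assume $\zeta^*(t_0,x_0)<0$; otherwise the term $\max\{\cdot,\zeta\}$ already has to be handled through the $Z^*\le0$ part. Set
\[
\Phi(t,y):=\phi\big(t,l^{-1}(y)\big).
\]
Because $l^{-1}\in C^2$, $\Phi$ is an admissible $C^1$ (indeed as smooth as $\phi$ in $x$) test function. Then $Z^*-\Phi$ has a local maximum at $(t_0,y_0)$ with $y_0=l(x_0)$. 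The chain rule gives $\partial_t\Phi=\partial_t\phi$ and
\[
\partial_y\Phi(t_0,y_0)=\frac{\partial_x\phi(t_0,x_0)}{l'(x_0)}=\frac{h'(y_0)}{p_0^+}\,\partial_x\phi(t_0,x_0).
\]
Therefore
\[
\frac{p_0^+}{h'(y_0)}\,\partial_y\Phi(t_0,y_0)=\partial_x\phi(t_0,x_0).
\]
Applying the subsolution property of $Z^*$ from Theorem \ref{thm:homo- eq} gives $\partial_t\phi-\lambda(\partial_x\phi)\le 0$ at $(t_0,x_0)$. The inequality $\zeta^*\le0$ is inherited directly from $Z^*\le0$. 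The supersolution case for $\zeta_*$ is symmetric, using minima.

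The only point needing care is the regularity of the test functions. If the viscosity notion requires $C^1$ test functions, $l^{-1}\in C^2$ suffices. If $C^2$ test functions are used, as in the proof of Theorem \ref{thm:homo- eq}, which uses $\partial_{xx}\phi$, then the relevant composition is $\phi\circ l^{-1}$, which is $C^2$ since $h\in C^2$. We must also check that the proof of Theorem \ref{thm:homo- eq} uses no more than $C^2$ regularity of the test function; it does not. Finally, the "local maximum" version of the test suffices because $l$ is a homeomorphism, so small neighborhoods map to small neighborhoods and strict maxima are preserved.
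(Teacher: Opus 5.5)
Your proposal is correct and follows the paper's proof. Both pull test functions back through $l$ (the paper's $\psi(t,l(x))=\phi(t,x)$ is your $\Phi=\phi(t,l^{-1}(\cdot))$) and use $l'(x)=p_0^+/h'(l(x))$ to turn $\lambda\big(\frac{p_0^+}{h'(l(x_0))}\partial_y\Phi\big)$ into $\lambda(\partial_x\phi)$; the paper writes out the supersolution case and you the subsolution case, and your remarks on $l^{-1}\in C^2$ and semicontinuity are correct.

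One small correction: in the subsolution case, $Z^*\le 0$ does not justify the reduction to $\zeta^*(t_0,x_0)<0$, because where $\zeta^*=0$ one still needs $\partial_t\phi\le\lambda(\partial_x\phi)$. The reduction is also unnecessary, since your transfer argument never uses the sign and simply carries whatever Theorem \ref{thm:homo- eq} gives at $(t_0,l(x_0))$ back to $(t_0,x_0)$.
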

\begin{proof}
It is easy to see that $\zeta_*(t,0) = 0 \text{ for } t > 0$ and $\zeta_* (0,x) = Z_*(0,l(x))=h(0)-h(l(x))= -p^+_0x$ for $x\geq0$.
We only prove that $\zeta$ is a viscosity super-solution to 	
     	\begin{equation}\label{eq:zeta be supersol}
     		\max\{\partial_t \zeta_* - \lambda(\partial_x \zeta_* ), \zeta_* \} \geq 0.
     \end{equation}
The sub-solution  case can be proved similarly.

     Assume that there exists $\phi(t,x)$ such that $\zeta_*-\phi$ reaches its strict local minimum at $(t_0,x_0)$ with $\zeta_*(t_0,x_0)<0$. Without loss of generality, we may assume that $x_0\geq0$. Let $\psi(t,l(x))=\phi(t,x).$
     Then  $Z_*-\psi$ attains a strict local minimum at $(t_0,l(x_0))$ with $Z_*(t_0,l(x_0))=\zeta_*(t_0,x_0)<0$. By Theorem \ref{thm:homo- eq}, we have
     \begin{equation}\label{eq:tran to standard HJ}
     	\partial_t\psi(t_0,l(x_0))-\lambda\left(\frac{p^+_0}{h^\prime(l(x_0))}\partial_x\psi(t_0,l(x_0))\right)\geq0.
     \end{equation}
     Note that $\partial_t\phi(t,x)=\partial_t\psi(t,l(x))$ and $\partial_x\phi(t,x)=\partial_x\psi(t,l(x))l^\prime(x)=\frac{p^+_0}{h^\prime(l(x))}\partial_x\psi(t,l(x)).$
     The equation \eqref{eq:tran to standard HJ} becomes
     $$	\partial_t\phi(t_0,x_0)-\lambda\left(\partial_x\phi(t_0,x_0)\right)\geq0,$$
     which yields that \eqref{eq:zeta be supersol} in viscosity sense.
\end{proof}

Before solving \eqref{H_J}, we introduce some properties of the generalized principal eigenvalue $\lambda$ and its  Legendre transform  for the reader's convenience. 
Note that the function $\lambda(p)$ is convex, and
$\begin{aligned}
	\lambda(p) 
	\geq a p^2 
	+ c
\end{aligned}$
for some $a,c > 0$ by (\textbf{A2}). 
Recall that  $p_{\pm} > 0$ are the constants such that
	\begin{center}
	$\begin{aligned}
	\frac{\lambda(-p_+)}{p_+}=	\inf\limits_{p > 0}\frac{\lambda(-p)}{p} 		
		\text{ and } \frac{\lambda(p_-)}{p_-}=\inf\limits_{p > 0}\frac{\lambda(p)}{p}.
		\end{aligned}$
\end{center}
The Legendre transform $\lambda^*$ of  $\lambda$ is given by 
  i.e. 
  \begin{center}
  	$\begin{aligned}
  		\lambda^*(q) = \sup\limits_{p \in \RR} \ (pq-\lambda(p)).
  	\end{aligned}$
  \end{center}
  Moreover, $\lambda^*$ is convex, superlinear, and its Legendre transform is $\lambda$, i.e., $\lambda(p)=(\lambda^*(\cdot))^*(p)$ (see, for example \cite[Theorem 2.13]{Tran2021 HJ}).
  Note that by definition of Legendre transform, we have
  the following Fenchel's inequality
\begin{equation*}
\lambda(p) + \lambda^*(q) \geq pq \qquad \text{ for all } p, q \in \RR. 
\end{equation*}
\begin{lemma}\label{lem:prop1}
	The following are valid:\\
(i)  For any $q \in \RR$, 
there exists $p \in \RR$, 
such that
	$\begin{aligned}
		\lambda^*(q)
		= pq-\lambda(p) .
	\end{aligned}$\\
(ii) For any $p \in \RR$, 
there exists $q \in \RR$ such that
$\begin{aligned}
		\lambda(p)  
		= pq-\lambda^*(q) .
	\end{aligned}$\\
(iii)  $\lambda^*(q)$ is nonincreasing and positive in $(-\infty,\frac{\lambda(-p_+)}{-p_+})$, $ \lambda^*(\frac{\lambda(-p_+)}{-p_+}) = 0$ and 
\begin{equation}\label{eq:negative near 0-}
	\lambda^*(q) < 0\text{ if } q\in(\frac{\lambda(-p_+)}{-p_+},0].
	\end{equation}
\end{lemma}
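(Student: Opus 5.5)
The plan is to use only that $\lambda$ is convex, finite, and satisfies $\lambda(p)\ge ap^2+c$ with $a,c>0$; in particular $\lambda$ is continuous and superlinear, and $\lambda>0$ everywhere by (\textbf{A2}).

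\emph{Part (i).} Fix $q$. The function $p\mapsto pq-\lambda(p)$ is continuous and bounded above by $pq-ap^2-c$, which tends to $-\infty$ as $|p|\to\infty$. Hence the supremum defining $\lambda^*(q)$ is attained on a compact set, and it is finite.

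\emph{Part (ii).} Since $\lambda$ is convex and finite on $\RR$, it has a nonempty subdifferential at every $p$. Take any $q\in\partial\lambda(p)$, so that $\lambda(p')\ge\lambda(p)+q(p'-p)$ for all $p'$. This rearranges to $p'q-\lambda(p')\le pq-\lambda(p)$, so the supremum is attained at $p$, i.e. $\lambda^*(q)=pq-\lambda(p)$. Equivalently, $\lambda(p)=pq-\lambda^*(q)$.

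\emph{Part (iii).} This is the substantive step. Set $c_+:=\lambda(-p_+)/p_+>0$, so that $-c_+=\lambda(-p_+)/(-p_+)$.

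\textbf{Sign of $\lambda^*$.} For $q\le 0$ we have $\lambda^*(q)\le 0$ if and only if $pq\le\lambda(p)$ for all $p$. For $p\ge 0$ this holds automatically, since $pq\le 0<\lambda(p)$. For $p=-s<0$ it reads $-sq\le\lambda(-s)$, i.e. $-q\le\lambda(-s)/s$, and this holds for all $s>0$ exactly when $-q\le c_+$, i.e. $q\ge -c_+$. So $\lambda^*(-c_+)\le 0$, and $\lambda^*(-c_+)\ge 0$ by testing $p=-p_+$. Hence $\lambda^*(-c_+)=0$.

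\textbf{Strict negativity on $(-c_+,0]$.} Take $q\in(-c_+,0]$ and use the maximizer $p$ from (i).
\begin{itemize}
\item If $p\ge 0$, then $pq-\lambda(p)\le-\lambda(p)<0$.
\item If $p=-s<0$, then $pq-\lambda(p)=s(-q-\lambda(-s)/s)\le s(-q-c_+)<0$.
\end{itemize}
Since the supremum is attained, $\lambda^*(q)<0$.

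\textbf{Positivity for $q<-c_+$.} Test $p=-p_+$: this gives $\lambda^*(q)\ge -p_+q-\lambda(-p_+)=p_+(-q-c_+)>0$.

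\textbf{Monotonicity.} The function $\lambda^*$ is convex on $\RR$. It is positive on $(-\infty,-c_+)$, zero at $-c_+$, and negative just to the right of $-c_+$. The maximizing $p$ for $q<-c_+$ must be negative: if $p\ge0$ then $pq-\lambda(p)<0$, which contradicts positivity. So each $q<-c_+$ is the slope point of $\lambda^*$ with subgradient $p<0$, i.e. $\lambda^*$ has negative subgradients there. Alternatively, a convex function that takes a larger value at $q_1<q_2$ than at a point further right cannot increase on $(-\infty,q_2]$. Either way $\lambda^*$ is nonincreasing on $(-\infty,-c_+)$.

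The main care point is the strictness in \eqref{eq:negative near 0-}. It relies on attainment from (i) together with $\lambda>0$ from (\textbf{A2}); I expect no genuine obstacle.
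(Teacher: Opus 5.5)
Your proof is correct and follows essentially the paper's route: attainment via the quadratic lower bound for (i), and for (iii) splitting the supremum by the sign of $p$, using $\lambda>0$ and the fact that $\lambda(-p_+)/p_+=\inf_{s>0}\lambda(-s)/s$. Where you deviate, it is a tightening rather than a new approach: in (ii) you take $q\in\partial\lambda(p)$ directly instead of appealing to $\lambda=(\lambda^*)^*$, which tacitly requires the biconjugate supremum to be attained; you get positivity and monotonicity on $(-\infty,\lambda(-p_+)/(-p_+))$ from the test point $p=-p_+$ and from the maximizers being negative, rather than from the paper's convexity argument; and you use the attainment from (i) to make the strict inequality on $(\lambda(-p_+)/(-p_+),0]$ rigorous, a point the paper passes over as following ``directly''.
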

	  
\begin{proof}
proof of (i): By the above quadratic lower bound for $\lambda(p)$, one has
\begin{center}
	$\begin{aligned}
		\lambda^*(q) 
		= \sup\limits_{p \in \RR}\{pq - \lambda(p)\} 
		= \max\limits_{p \in \RR}\{pq - \lambda(p)\},
	\end{aligned}$
\end{center}
which yields (i). 

The statement (ii) follows from the observation that  $\lambda = (\lambda^*)^*$ immediately.

Proof of (iii): Note that
	\begin{center}
		$\begin{aligned}
			\lambda^*(\frac{\lambda(-p_+)}{-p_+}) 
			&=\sup\limits_{p\in\RR} \left(\frac{\lambda(-p_+)}{-p_+}p - \lambda(p)\right)\\
			&=\max\left(\sup\limits_{p\geq0} \left( \frac{\lambda(-p_+)}{-p_+}p- \lambda(p)  \right),\sup\limits_{p<0} p\left( \frac{\lambda(-p_+)}{-p_+} - \frac{\lambda(p)}{p}  \right) \right)
		\end{aligned}$
	\end{center}
    and $\frac{\lambda(-p_+)}{-p_+}p- \lambda(p)\leq-\lambda(p)<0$	if $p \geq 0$. Then $\lambda^*(\frac{\lambda(-p_+)}{-p_+}) =\sup\limits_{p<0} p\left( \frac{\lambda(-p_+)}{-p_+} - \frac{\lambda(p)}{p}  \right)=0$ by the definition of $p_+$.
    Moreover, the convexity of $\lambda^*$ yields that $\lambda^*(q)$ is nonincreasing and nonnegative in $(-\infty,\frac{\lambda(-p_+)}{-p_+})$.
    Let us now show that $\lambda^*(q)$ is positive  in $(-\infty,\frac{\lambda(-p_+)}{-p_+})$. Suppose that $\lambda^*(q)=0$ for some $q<\frac{\lambda(-p_+)}{-p_+}$. Then we must have  $\lambda^*(0)\geq0$ by the convexity of  $\lambda^*(q)$,  which is impossible since $\lambda^*(0)=\sup\limits_{p\in\RR}  - \lambda(p)<0$ by \textbf{(A2)}.   
    
    Now for any $ q\in(\frac{\lambda(-p_+)}{-p_+},0]$, we have  $\lambda^*(q) =\sup\limits_{p<0} p\left( q - \frac{\lambda(p)}{p}  \right)$ by the similar arguments to before, which implies $\lambda^*(q) <0$ directly.
\end{proof}
For any $p \in \RR$, 
define $D\lambda(p)$ by
\begin{center}
$\begin{aligned}
D\lambda(p) 
:= \{q \in \RR : 
 \lambda(p) 
 + \lambda^*(q) 
 = pq\}
\end{aligned}$
\end{center}
By the above lemma, we know that $D\lambda(p)$ is nonempty. Moreover, it is closed and in fact the (Fr\'echet) subdifferential of $\lambda$ at $p$.
We next show that $D\lambda(p)$ is monotone in $p$ in certain sense thanks to the convexity of $\lambda$.
	  
\begin{lemma}\thlabel{monotonicity}
	The following are valid:\\
	(i) Assume $p_1 < p_2$.
Then we have  
\begin{center}
$\begin{aligned}
q_1 \leq q_2 \text{  for any $q_1 \in D\lambda(p_1)$ and $q_2 \in D\lambda(p_2)$. }
\end{aligned}$
\end{center}
In particular, if $q\in D\lambda(p_1)\cap D\lambda(p_2)$, then $D\lambda(p)=\{q\}$ for any $p\in(p_1,p_2)$.\\
(ii) Assume that $q_0\in D\lambda(p)$. Then the function $q\mapsto pq-\lambda^*(q)$ is non-decreasing in $(-\infty,q_0)$ and non-increasing in $(q_0,+\infty)$.
\end{lemma}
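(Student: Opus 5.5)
The whole argument rests on Fenchel's inequality $\lambda(p)+\lambda^*(q)\ge pq$, with equality exactly when $q\in D\lambda(p)$. The plan is to exploit this in two ways: a monotonicity-of-subdifferentials argument for (i), and a concavity argument for (ii).

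For (i), fix $p_1<p_2$, $q_1\in D\lambda(p_1)$ and $q_2\in D\lambda(p_2)$. The equality cases give $\lambda(p_1)+\lambda^*(q_1)=p_1q_1$ and $\lambda(p_2)+\lambda^*(q_2)=p_2q_2$. Fenchel's inequality applied to the crossed pairs gives $\lambda(p_1)+\lambda^*(q_2)\ge p_1q_2$ and $\lambda(p_2)+\lambda^*(q_1)\ge p_2q_1$. Adding the two inequalities and subtracting the two equalities leaves
\[
0\ge p_1q_2+p_2q_1-p_1q_1-p_2q_2=-(p_2-p_1)(q_2-q_1).
\]
Since $p_2>p_1$, this forces $q_2\ge q_1$.

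For the "in particular" part of (i), suppose $q\in D\lambda(p_1)\cap D\lambda(p_2)$ and let $p\in(p_1,p_2)$. Take any $q'\in D\lambda(p)$, which is nonempty by Lemma \ref{lem:prop1}(i). Applying the monotonicity just proved to the pairs $(p_1,p)$ and $(p,p_2)$ gives $q\le q'\le q$, so $q'=q$. Hence $D\lambda(p)\subset\{q\}$, and nonemptiness gives $D\lambda(p)=\{q\}$.

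For (ii), set $g(q)=pq-\lambda^*(q)$. Because $\lambda^*$ is convex and finite on $\RR$ (it is superlinear and finite since $\lambda$ grows quadratically), $g$ is concave. By Fenchel's inequality, $g(q)\le\lambda(p)$ for every $q$. Since $q_0\in D\lambda(p)$, we have $g(q_0)=\lambda(p)$, so $q_0$ is a global maximizer of $g$.

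A concave function on $\RR$ is non-decreasing to the left of any maximizer and non-increasing to the right. To see this, take $q<q'<q_0$ and write $q'$ as a convex combination of $q$ and $q_0$. Concavity then gives
\[
g(q')\ge \min\{g(q),g(q_0)\}=g(q),
\]
using $g(q_0)\ge g(q)$. This shows $g$ is non-decreasing on $(-\infty,q_0)$, and the case $(q_0,+\infty)$ is symmetric.

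The only point needing care is finiteness of $\lambda^*$ everywhere, so that concavity applies on all of $\RR$. This follows from the quadratic lower bound $\lambda(p)\ge ap^2+c$ recorded above. I expect no real obstacle.
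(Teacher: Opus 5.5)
Your proof is correct and is essentially the paper's argument: in (i) you combine the two equality cases with the two crossed Fenchel inequalities (the paper writes the same thing as a single chain from $(p_2-p_1)q_2$ down to $(p_2-p_1)q_1$), and in (ii) you use concavity of $q\mapsto pq-\lambda^*(q)$ with $q_0$ as a global maximizer. The only slip is a citation: nonemptiness of $D\lambda(p)$ for a fixed $p$ comes from Lemma~\ref{lem:prop1}(ii), not (i).
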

	  
\begin{proof}
Proof of (i): We have
\begin{center}
$\begin{aligned}
(p_2 - p_1)q_2 
&= \lambda(p_2) 
 + \lambda^*(q_2) 
 - p_1q_2\\
&\geq \lambda(p_2) 
 - \lambda(p_1) \\
&\geq p_2q_1 
 - \lambda^*(q_1) 
 - \lambda(p_1) \\
&= (p_2 - p_1) q_1
\end{aligned}$
\end{center}
The equality in the above equation holds due to Lemma \ref{lem:prop1}, while the inequality holds because $\lambda$ is the  Legendre transform of $\lambda^*$. 
Dividing both sides by  $(p_2 - p_1)$  yields the desired conclusion.
If $q\in D\lambda(p_1)\cap D\lambda(p_2)$, then, for any $r\in D\lambda(p)$ with $p\in(p_1,p_2)$, we have
$r\geq q$ since  $q\in D\lambda(p_1)$ and $r\leq q$ since $q\in D\lambda(p_2)$. Hence $D\lambda(p)=\{q\}$ for any $p\in(p_1,p_2)$.
 
Proof of (ii): Note that the function $q\mapsto pq-\lambda^*(q)$ is concave. Furthermore,
$$\max\limits_{q\in\mathbb R}\{pq-\lambda^*(q)\}=(\lambda^*(\cdot))^*(p)=\lambda(p)=pq_0-\lambda^*(q_0),$$
which means that the function $q\mapsto pq-\lambda^*(q)$ reaches its maximum at $q_0$. Combining this with the concavity of the function, we obtain the conclusion.	 
\end{proof}

Define
\begin{equation*}
	\Bcal = \{\gamma : \mathbb{R}_+ \to \mathbb{R}_+ \hbox{ is absolutely continuous} \},
\end{equation*}
and for any $\gamma \in \Bcal$,
we define the first exit time $\tau(\gamma)$ by
\begin{equation*}
	\tau(\gamma) := \inf\{t > 0 : \gamma(t) = 0\}.
\end{equation*}
Then we have
\begin{theorem}\thlabel{Lax formula}
	The viscosity solution of \eqref{H_J} has the following representation formula
	\begin{equation}\label{solutionformer}
		\zeta(t,x) 
		= -\inf\limits_{\gamma \in \Bcal}
		\sup\limits_{a \in [0,t \wedge \tau(\gamma)]}
		\{\int_{0}^{a}\lambda^*(\gamma'(s))ds + 1_{\{a=t\}}p_0^+\gamma(t):
		\gamma(0) = x\}.
	\end{equation}
\end{theorem}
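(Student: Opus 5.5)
The plan is to verify directly that the right-hand side of \eqref{solutionformer} is a continuous viscosity solution of \eqref{H_J}, and then to invoke a comparison principle. Set
$$V(t,x):=\inf_{\gamma\in\Bcal,\ \gamma(0)=x}\ \sup_{a\in[0,t\wedge\tau(\gamma)]}\Big\{\int_0^a\lambda^*(\gamma'(s))ds+1_{\{a=t\}}p_0^+\gamma(t)\Big\},$$
so that $\zeta=-V$.

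Under this change of sign, \eqref{H_J} becomes the obstacle problem
$$\min\{V_t+\lambda(-V_x),\,V\}=0,\qquad V(t,0)=0,\qquad V(0,x)=p_0^+x.$$
Some properties of $V$ are immediate.
\begin{itemize}
\item Taking $a=0$ inside the supremum gives $V\ge 0$.
\item $V(t,0)=0$, because $\tau(\gamma)=0$ when $\gamma(0)=0$.
\item $V(0,x)=p_0^+x$.
\item Choosing $\gamma(s)=x+qs$ with $q\in(\lambda(-p_+)/(-p_+),0)$ and using \eqref{eq:negative near 0-} gives $V(t,x)\le p_0^+x$. Indeed, along this path the running integral is negative, and the terminal term is at most $p_0^+x$ when $a=t$.
\end{itemize}

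Next I would establish the dynamic programming principle. For $0<\sigma<t$,
$$V(t,x)=\inf_{\gamma}\max\Big\{\sup_{a\le \sigma\wedge\tau}\int_0^a\lambda^*(\gamma'),\ \int_0^{\sigma}\lambda^*(\gamma')ds+V(t-\sigma,\gamma(\sigma))\Big\}.$$
The second term is understood to be absent if $\tau\le\sigma$. The proof is the usual concatenation of paths. It only uses that the supremum over $a\in[0,t]$ splits into $[0,\sigma]$ and $[\sigma,t]$, and that the tail $[\sigma,t]$ is the same problem started from $(t-\sigma,\gamma(\sigma))$ with cost shifted by $\int_0^\sigma\lambda^*(\gamma')$.

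I would then prove local Lipschitz continuity of $V$. The superlinearity of $\lambda^*$ and the linear upper bound $V\le p_0^+x$ confine near-optimal paths to compact sets with bounded speed, and comparison of translated or rescaled paths then gives the Lipschitz bound.

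With the dynamic programming principle in hand, I would check the viscosity inequalities.

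For the subsolution property of $V$ (equivalently, the supersolution property of $\zeta$), fix a test function touching $V$ from above at a point with $V>0$. Use the straight path $\gamma'\equiv q$ on $[0,\sigma]$. For small $\sigma$ the max in the dynamic programming principle is realised by the second term, because $V>0$ there. This yields $V_t\le \lambda^*(q)+qV_x$ for every $q$, and taking the infimum over $q$ gives $V_t\le-\lambda(-V_x)$ by Lemma \ref{lem:prop1}.

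For the supersolution property, we must show $V\ge0$ (already known) and $V_t+\lambda(-V_x)\ge0$ at points where $V>0$. Take $\sigma$-optimal paths and apply Fenchel's inequality $\lambda^*(q)\ge -qV_x-\lambda(-V_x)$ along them. This is the standard argument for Lax--Oleinik type formulas.

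Finally, I would prove a comparison principle for \eqref{H_J} on $(0,\infty)\times(0,\infty)$. The statement is: if $\zeta_1\in USC$ is a subsolution, $\zeta_2\in LSC$ is a supersolution, $\zeta_1\le\zeta_2$ at $t=0$ and at $x=0$, and both have at most linear growth in $x$, then $\zeta_1\le\zeta_2$. The linear growth holds for $Z^*$ and $Z_*$ by the bounds in Lemma \ref{lem:well defined and initial value}, transported via $l$. The proof uses doubling of variables with a penalisation $\alpha(x^2+y^2)^{1/2}$ at infinity and a term $\eta/(T-t)$. The obstacle part is harmless: at a point where the subsolution is $<0$, the PDE inequality is active. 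The Hamiltonian $\lambda$ is convex and locally Lipschitz, so the usual estimate closes.

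Applying comparison with $\zeta=-V$ gives $\zeta^*\le\zeta\le\zeta_*$, while $\zeta_*\le\zeta^*$ holds by definition. Hence all three coincide, which both identifies $\zeta$ and proves uniqueness.

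The step I expect to be the main obstacle is the boundary $x=0$. The Dirichlet condition there is only known to hold pointwise for $Z^*$ and $Z_*$ (Step 1 of Theorem \ref{thm:homo- eq}), and in general it might hold only in the generalized viscosity sense. Here, however, Step 1 together with the Remark after Theorem \ref{thm:homo- eq} gives $Z^*=Z_*=0$ in a full neighbourhood $\{x\le c_0t\}$ (respectively $\{x\le h^{-1}(c_0t+h(0))\}$). So the boundary values are attained continuously, and in the doubling argument any maximum point on the boundary can be excluded, which makes comparison tractable. I would try this first. If it fails, the fallback is to treat the problem on $\{x>c_0 t\}$, with known continuous data on the lateral boundary.
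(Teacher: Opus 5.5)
Your plan is essentially the paper's proof. The shared steps are dynamic programming by concatenating paths, the straight path $x+qs$ at points where $V>0$ (the paper's $\underline{\zeta}<0$) for one inequality, near-optimal paths kept local and away from $x=0$ by superlinearity together with Fenchel's inequality for the other, explicit paths for the boundary and initial data, and then comparison. The paper cites comparison rather than proving it, and it works with the semicontinuous envelopes of $\zeta$ instead of first proving Lipschitz continuity.

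One correction: the inequality $V_t+\lambda(-V_x)\ge 0$ has to hold at every point, including where $V=0$. With your restriction to $V>0$, the function $\max\{0,p_0^+x-\lambda(-p_0^+)t\}$ would pass all your checks even when $p_0^+>p_+$, where by Theorem~\ref{thm:the sol}(i) it is not the solution. Your near-optimal-path argument never uses $V>0$, so it already gives the inequality everywhere.
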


To prove this,
we first introduce the Dynamic Programming Principle.
\begin{lemma}
	[Dynamic Programming Principle (DPP)]
	Let $\zeta$ be defined as in \eqref{solutionformer}.
	For any $t > 0$ and $x > 0$ and $0 < r < t$,
	we have
	\begin{equation}\label{DPP}
		\zeta(t,x) = - \inf\limits_{\gamma \in \Bcal}\sup\limits_{a \in [0,  r \wedge \tau(\gamma)]} \{\int_{0}^{a}\lambda^*(\gamma'(s))ds - 1_{\{a = r\}}\zeta(t-r,\gamma(r)) \  : \ \gamma(0) = x \}.
	\end{equation}
\end{lemma}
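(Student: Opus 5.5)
We prove the Dynamic Programming Principle by showing the two inequalities between $\zeta(t,x)$ and the right-hand side of \eqref{DPP}, which we denote $-V_r(t,x)$. Write $\zeta(t,x)=-V(t,x)$, where
\[
V(t,x)=\inf_{\gamma(0)=x}J_t(\gamma),\qquad J_t(\gamma)=\sup_{a\in[0,t\wedge\tau(\gamma)]}\Big\{\int_0^a\lambda^*(\gamma')\,ds+1_{\{a=t\}}p_0^+\gamma(t)\Big\}.
\]
Also write $\Phi(\gamma,a)=\int_0^a\lambda^*(\gamma'(s))\,ds$. The whole argument rests on one observation: for a path $\gamma$, the supremum over $a\in[0,t\wedge\tau]$ splits into the supremum over $a\in[0,r\wedge\tau]$ and, when $\tau>r$, the supremum over $a\in[r,t\wedge\tau]$.

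For a path $\gamma$ with $\tau(\gamma)>r$, let $\tilde\gamma(s)=\gamma(r+s)$. Then $\tau(\tilde\gamma)=\tau(\gamma)-r$, and for $a\ge r$ we have $\Phi(\gamma,a)=\Phi(\gamma,r)+\Phi(\tilde\gamma,a-r)$, with the terminal term unchanged. Hence
\[
J_t(\gamma)=\max\Big\{\sup_{a\in[0,r)}\Phi(\gamma,a),\ \Phi(\gamma,r)+J_{t-r}(\tilde\gamma)\Big\}.
\]
When $\tau(\gamma)\le r$, the indicator $1_{\{a=r\}}$ can only matter at $a=r=\tau$. There $\gamma(r)=0$, and we use the convention $\zeta(t-r,0)=0$ (the boundary value), which matches $J_t$. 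Treating the endpoint $a=r$ as either a plain term or the concatenated term is harmless by continuity in $a$, since $\lambda^*$ is finite on the relevant paths.

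\emph{Inequality $V\ge V_r$.} Take any $\gamma$ with $\gamma(0)=x$. Since $J_{t-r}(\tilde\gamma)\ge V(t-r,\gamma(r))=-\zeta(t-r,\gamma(r))$, the splitting identity gives that $J_t(\gamma)$ is at least the inner supremum in \eqref{DPP} evaluated at $\gamma$. Taking the infimum over $\gamma$ gives $V(t,x)\ge V_r(t,x)$.

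\emph{Inequality $V\le V_r$.} Fix $\eta>0$ and choose $\gamma_1$ that is $\eta$-optimal for $V_r(t,x)$. If $\tau(\gamma_1)>r$, set $y=\gamma_1(r)>0$ and choose $\gamma_2$ that is $\eta$-optimal for $V(t-r,y)$. Let $\gamma$ be $\gamma_1$ on $[0,r]$ followed by $\gamma_2(\cdot-r)$. This concatenation is absolutely continuous, stays in $\mathbb R_+$, and satisfies $\tau(\gamma)=r+\tau(\gamma_2)$. The splitting identity then yields $J_t(\gamma)\le V_r(t,x)+2\eta$. If $\tau(\gamma_1)\le r$, the conditions "$a\le t\wedge\tau$" and "$a\le r\wedge\tau$" coincide, and there is no terminal term because $t>r\ge\tau$, unless $\tau=r=a$, which is covered by the convention above. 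So we may take $\gamma=\gamma_1$ directly. Since $\eta$ is arbitrary, $V(t,x)\le V_r(t,x)$.

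The main obstacle is bookkeeping rather than analysis. First, $\tau(\gamma)$ may be $+\infty$, and a path may touch $0$ exactly at time $r$; this is where the value $\zeta(t-r,0)=0$ enters. Second, the splitting $\sup_{[0,t\wedge\tau]}=\max(\sup_{[0,r]},\sup_{[r,t\wedge\tau]})$ must be written so that the endpoint $a=r$ is counted consistently. Third, $V(t-r,\cdot)$ must be finite, which follows from the constant path $\gamma\equiv y$ and the bound $\lambda^*(0)<0$. No regularity of $\zeta$ is needed, because we only ever choose $\eta$-optimal paths pointwise.
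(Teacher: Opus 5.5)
Your proposal is correct and follows essentially the paper's argument. One inequality compares the tail of an arbitrary path with $-\zeta(t-r,\gamma(r))$; the other concatenates an $\eta$-optimal path for the right-hand side of \eqref{DPP} with an $\eta$-optimal path for $\zeta(t-r,\gamma(r))$. The only difference is that your exact splitting identity replaces the paper's case analysis on a point $a_0$ where the supremum is assumed to be attained. One small remark: $\zeta(t-r,0)=0$ is not merely a convention. It follows from \eqref{solutionformer} itself by taking $\gamma\equiv 0$, for which $\tau(\gamma)=0$ and the cost is $0$, while every cost is nonnegative.
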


\begin{proof}
	For any $t > 0, x > 0$ and $0 < r < t$,
	define
	\begin{equation*}
		W(t,x) = - \inf\limits_{\gamma \in \Bcal}\sup\limits_{a \in [0,  r \wedge \tau(\gamma)]} \{\int_{0}^{a}\lambda^*(\gamma'(s))ds - 1_{\{a = r\}}\zeta(t-r,\gamma(r)) \  : \ \gamma(0) = x \}.
	\end{equation*}
	For any $\epsilon > 0$,
	there exists $\gamma \in \Bcal$ with $\gamma(0) = x$ such that
	\begin{equation*}
		\sup\limits_{a \in [0,t \wedge \tau(\gamma)]}\{\int_{0}^{a}\lambda^*(\gamma'(s))ds + 1_{\{a = t\}}p^+_0\gamma(t)\} < - \zeta(t,x) + \epsilon.
	\end{equation*}
	By definition,
	we have
	\begin{equation*}
		-\zeta(t-r,\gamma(r)) \leq \sup\limits_{a \in [r, t \wedge \tau(\gamma)]} \{\int_{r}^{a}\lambda^*(\gamma'(s))ds + 1_{\{a=t\}}p\gamma(t)\}.
	\end{equation*}
	Now take $\gamma$ as a test function in \eqref{DPP} and assume the supremum is attained at $a_0$.
	If $a_0 < r$,
	then
	\begin{equation*}
		-W(t,x) 
		\leq \int_{0}^{a_0}\lambda^*(\gamma'(s))ds
		\leq \sup\limits_{a \in [0,t \wedge \tau(\gamma)]} \{\int_{0}^{a}\lambda^*(\gamma'(s))ds + 1_{\{a=t\}}p_0^+\gamma(t)\} \leq -\zeta(t,x) + \epsilon.
	\end{equation*}
	If $a_0 = r$,
	then
	\begin{equation*}
		\begin{aligned}
			-W(t,x) 
			&\leq \int_{0}^{r}\lambda^*(\gamma'(s))ds - \zeta(t-r,\gamma(r)) \\
			&\leq  \int_{0}^{r}\lambda^*(\gamma'(s))ds + \sup\limits_{a \in [r, t \wedge \tau(\gamma)]} \{\int_{r}^{a}\lambda^*(\gamma'(s))ds + 1_{\{a=t\}}p_0^+\gamma(t)\} \\
			&\leq \sup\limits_{a \in [0,t \wedge \tau(\gamma)]} \{\int_{0}^{a}\lambda^*(\gamma'(s))ds + 1_{\{a=t\}}p_0^+\gamma(t)\} \leq -\zeta(t,x) + \epsilon.
		\end{aligned}
	\end{equation*}
	In conclusion,
	we have $-W(t,x) \leq -\zeta(t,x) + \epsilon$,
	and therefore 
	\begin{equation*}
		\zeta(t,x) \leq W(t,x)
	\end{equation*}
	by letting $\epsilon \to 0$.
	
	Conversely,
	for any $\epsilon > 0$,
	let $\gamma \in \Bcal$ with $\gamma(0) = x$ satisfying
	\begin{equation*}
		\sup\limits_{a \in [0, r \wedge \tau(\gamma)]} \{\int_{0}^{a}\lambda^*(\gamma'(s))ds - 1_{\{a=r\}}\zeta(t-r,\gamma(r))\} \leq -W(t,x) + \epsilon. 
	\end{equation*}
	If $\tau(\gamma) \leq r$,
	then
	\begin{equation*}
		\begin{aligned}
			-\zeta(t,x) 
			&\leq \sup\limits_{a \in [0,t \wedge \tau(\gamma)]}\{\int_{0}^{a}\lambda^*(\gamma'(s))ds + 1_{\{a = t\}}p_0^+\gamma(t)\} 
			= \sup\limits_{a \in [0, \tau(\gamma)]}\int_{0}^{a}\lambda^*(\gamma'(s))ds \\
			&= \sup\limits_{a \in [0, r \wedge \tau(\gamma)]} \{\int_{0}^{a}\lambda^*(\gamma'(s))ds - 1_{\{a=r\}}\zeta(t-r,\gamma(r))\} 
			\leq -W(t,x) + \epsilon. 
		\end{aligned}
	\end{equation*}
	If $\tau(\gamma) > r$,
	then there exists another path $\tilde{\gamma} \in \Bcal$ with $\tilde{\gamma}(0) = \gamma(r)$ such that
	\begin{equation*}
		\sup\limits_{a \in [0, t-r \wedge \tau(\gamma)]} \{\int_{0}^{a}\lambda^*(\tilde{\gamma}'(s))ds + 1_{\{a=t-r\}}p_0^+ \tilde{\gamma}(t-r)\} \leq -\zeta(t-r,\gamma(r)) + \epsilon. 
	\end{equation*}
	Now let 
	\begin{equation*}
		\Gamma(s) = 
		\begin{cases}
			\gamma(s) \quad &0 \leq s \leq r, \\
			\tilde{\gamma}(s-r) \quad &s \geq r.
		\end{cases}
	\end{equation*}
	It is easy to verify that $\Gamma \in \Bcal$ and $\tau(\Gamma) = \tau(\tilde{\gamma}) + r$.
	Take $\Gamma$ as a test function in \eqref{solutionformer} and assume the supremum is attained at $a_0$.
	If $a_0 < r$,
	then
	\begin{equation*}
		\begin{aligned}
	&\quad	-\zeta(t,x) \leq \int_{0}^{a_0}\lambda^*(\Gamma'(s))ds\\
	&\leq \sup\limits_{a \in [0, r \wedge \tau(\Gamma)]} \{\int_{0}^{a}\lambda^*(\gamma'(s))ds - 1_{\{a=r\}}\zeta(t-r,\gamma(r))\} \leq -W(t,x) + \epsilon. 
		\end{aligned}
	\end{equation*}
	If $a_0 \geq r$,
	then
	\begin{equation*}
		\begin{aligned}
			-\zeta(t,x)
			&\leq \int_{0}^{a_0}\lambda^*(\Gamma'(s))ds + 1_{\{a_0 = t\}}p_0^+ \Gamma(t) \\
			&= \int_{0}^{r}\lambda^*(\gamma'(s))ds + \int_{r}^{a_0}\lambda^*(\tilde{\gamma}'(s))ds + 1_{\{a_0 = t\}}p_0^+ \tilde{\gamma}(t-r) \\
			&\leq \int_{0}^{r}\lambda^*(\gamma'(s))ds + \sup\limits_{a \in [0, (t-r) \wedge \tau(\gamma)]} \{\int_{0}^{a}\lambda^*(\tilde{\gamma}'(s))ds + 1_{\{a=t-r\}}p_0^+ \tilde{\gamma}(t-r)\}\\
			&\leq \int_{0}^{r}\lambda^*(\gamma'(s))ds - \zeta(t-r,\gamma(r)) + \epsilon \\
			&\leq \sup\limits_{a \in [0, r \wedge \tau(\gamma)]} \{\int_{0}^{a}\lambda^*(\gamma'(s))ds - 1_{\{a=r\}}\zeta(t-r,\gamma(r))\} + \epsilon \leq -W(t,x) + 2\epsilon. 
		\end{aligned}
	\end{equation*}
	In conclusion,
	we have $-\zeta(t,x) \leq -W(t,x) + 2 \epsilon$.
	By letting $\epsilon \to 0$,
	we finally get that $W(t,x) \leq \zeta(t,x)$.
	As a result,
	we see that $W(t,x) = \zeta(t,x)$.
\end{proof}

\begin{proof}
[\textbf{Proof of \thref{Lax formula}}]
Let $\zeta$ be defined as in \eqref{solutionformer}.
Define the upper and lower semicontinuous envelopes of $\zeta$ by
\begin{equation*}
\overline{\zeta}(t,x) := \limsup_{(s,y) \to (t,x)}\zeta(s,y),
\qquad 
\underline{\zeta}(t,x) := \liminf_{(s,y) \to (t,x)}\zeta(s,y). 
\end{equation*}
Since $\lambda$ is finite and satisfies the quadratic lower bound,
$\lambda^*$ is finite, convex and superlinear.
Since the supremum in \eqref{solutionformer} contains the choice $a = 0$, we have $\zeta \leq 0$, and hence $\overline{\zeta} \leq 0$. 

We first prove that $\overline{\zeta}$ is a viscosity supersolution of
\begin{equation}\label{HJB_equation}
\max\{\partial_t\zeta - \lambda(\partial_x\zeta), \zeta\} = 0 \quad \text{ in } (0,\infty) \times (0,\infty).
\end{equation}
Let $\phi \in C^1$ touches $\zeta^*$ from above at $(t_0,x_0) \in (0,\infty) \times (0,\infty)$.
Let $Q_\rho := B(t_0,\rho) \times B(x_0,\rho)$ compactly supported contained in $(0,\infty) \times (0,\infty)$.
Then
\begin{equation*}
\zeta(t,x) \leq \overline{\zeta}(t,x) \leq \phi(t,x) \quad \text{ in } Q_\rho. 
\end{equation*}
It remains to prove
\begin{equation*}
\partial_t\phi(t_0,x_0) \leq \lambda(\partial_x\phi(t_0,x_0)).
\end{equation*}
Assume by contradiction that for some $\eta > 0$,
\begin{equation*}
\partial_t \phi(t,x) - \lambda(\partial_x\phi(t,x)) \geq \eta \quad \text{ in } Q_\rho.
\end{equation*}
Then, by Fenchel's inequality,
\begin{equation}\label{test1}
\lambda^*(q) + \partial_t \phi(t,x) - q \partial_x\phi(t,x) \geq \eta \quad \text{ for all } q \in \RR, \ (t,x) \in Q_\rho.
\end{equation}
Choose $(t_n,x_n) \to (t_0,x_0)$ such that
\begin{equation*}
\zeta(t_n,x_n) \to \overline{\zeta}(t_0,x_0), 
\qquad
\zeta(t_n,x_n) - \phi(t_n,x_n) \to 0.
\end{equation*}
Let $0 < r < \rho$ be fixed and small.
By the DPP, there exists $\gamma_n \in \Bcal$ with $\gamma_n(0) = x_n$ such that
\begin{equation}\label{test2}
\sup_{a \in [0,r \wedge \tau(\gamma_n)]}\big\{ \int_{0}^{a} \lambda^*(\gamma_n'(s))ds - 1_{\{a = r\}}\zeta(t_n - r, \gamma_n(r))\big\}
\leq -\zeta(t_n,x_n) + 1/n.
\end{equation}
By Jensen's inequality and the superlinearity of $\lambda^*$,
there exists $r > 0$ sufficiently small such that every path leaves $Q_\rho$ or hits $x = 0$ before time $r$ has cost larger than $-\overline{\zeta}(t_0,x_0) + 1$. 
More precisely,
Since
\begin{equation*}
\begin{aligned}
a\lambda^*(\frac{1}{a}\int_{0}^{a}\gamma_n'(s)ds)
\leq \int_{0}^{a}\lambda^*(\gamma_n'(s))ds 
&\leq \sup_{a \in [0,t \wedge \tau(\gamma_n)]}\big\{ \int_{0}^{a} \lambda^*(\gamma_n'(s))ds - 1_{\{a = r\}}\zeta(t_n - r, \gamma_n(r))\big\} \\
&\leq -\zeta(t_n,x_n) + 1/n
\leq - \overline{\zeta}(t_0,x_0) + 1
\end{aligned}
\end{equation*}
for all $a \in (0, r \wedge \tau(\gamma_n))$ and large $n$,
and 
\begin{equation*}
\lambda^*(p) \geq \alpha p^2 - C
\end{equation*}
for some $\alpha >0, C > 0$,
one readily obtains that
\begin{equation*}
-\overline{\zeta}(t_0,x_0) + 1
\geq a \lambda^*(\frac{1}{a}\int_{0}^{a}\gamma_n'(s)ds)
\geq \frac{\alpha}{a}(\gamma_n(a) - x_n)^2 - aC
\geq \frac{\alpha}{r}(\gamma_n(a) - x_n)^2 - rC
\end{equation*}
for all $a \in (0, r \wedge \tau(\gamma_n))$ and large $n$,
which implies that every path that satisfies \eqref{test2} cannot leave $Q_\rho$ or hit $x = 0$ before time $r$ for large $n$, provided $r$ sufficiently small.
Thus $\tau(\gamma_n) > r$ and $(t_n-s, \gamma_n(s)) \in Q_\rho$ for $0 \leq s \leq r$.
Integrating \eqref{test1} along $s \mapsto (t_n-s,\gamma_n(s))$ gives
\begin{equation*}
\int_{0}^{r}\lambda^*(\gamma_n'(s))ds - \phi(t_n-r,\gamma_n(r)) + \phi(t_n,x_n) \geq \eta r.
\end{equation*}
Using $\zeta \leq \phi$ at $(t_n-r,\gamma_n(r))$ and \eqref{test2},
we get
\begin{equation*}
-\zeta(t_n,x_n) + 1/n \geq -\phi(t_n,x_n) +\eta r.
\end{equation*}
Letting $n \to +\infty$ yields $0 \geq \eta r$, a contradiction.
Therefore $\overline{\zeta}$ is a viscosity subsolution of \eqref{HJB_equation}.

We next prove that $\underline{\zeta}$ is a viscosity supersolution of \eqref{HJB_equation}.
Let $\psi \in C^1$ touches $\underline{\zeta}$ from below at $(t_0,x_0) \in (0,\infty) \times (0,\infty)$.
If $\underline{\zeta}(t_0,x_0) = 0$,
the supersolution inequality is immediate.
Assume that $\underline{\zeta}(t_0,x_0) < 0$.
We show that
\begin{equation*}
\partial_t \psi(t_0,x_0) \geq \lambda(\partial_x \psi(t_0,x_0)).
\end{equation*} 
Suppose not.
Then for some $\eta > 0$ there exists $q \in \RR$ such that
\begin{equation*}
\partial_t \psi(t_0,x_0) - q \partial_x \psi(t_0,x_0) + \lambda^*(q) < -2\eta.
\end{equation*}
By continuity,
the same inequality with $-\eta$ instead of $-2\eta$ holds in a small cylinder $Q_\rho$.
Choose $(t_n,x_n) \to (t_0,x_0)$ such that
\begin{equation*}
\zeta(t_n,x_n) \to \underline{\zeta}(t_0,x_0),
\qquad
\zeta(t_n,x_n) - \psi(t_n,x_n) \to 0.
\end{equation*}
For $r$ sufficiently small,
the straight path $\gamma_n(s) = x_n + qs$ stays in $Q_\rho$ for $0 \leq s \leq r$.
Since $\underline{\zeta}(t_0,x_0) < 0$, for some $\kappa_0 > 0$ and all large $n$,
we have $-\zeta(t_n,x_n) > \kappa_0$..
The DPP applied to this straight path gives
\begin{equation*}
-\zeta (t_n, x_n) \leq \max\{0, r\lambda^*(q) - \zeta(t_n-r,x_n+qr)\}.
\end{equation*}
Since the left hand side is larger than $\kappa_0$,
the terminal term must dominate;
hence
\begin{equation*}
-\zeta(t_n,x_n) \leq r \lambda^*(q) - \zeta(t_n - r, x_n + qr).
\end{equation*}
Because $\psi \leq \underline{\zeta} \leq \zeta$,
we obtain
\begin{equation*}
\zeta(t_n,x_n) - \psi(t_n,x_n) \geq \psi(t_n-r, x_n+qr) - \psi(t_n,x_n) - r\lambda^*(q).
\end{equation*}
The choice of $q$ implies that the right hand side is at least $\eta r$ for $n$ large.
Letting $n \to +\infty$ gives a contradiction.
Hence $\underline{\zeta}$ is a viscosity supersolution of \eqref{HJB_equation}.

It remains to verify the boundary and initial values.
For $t > 0$ and $(s,y) \to (t,0)$,
take the path
\begin{equation*}
\gamma(r) = y-r.
\end{equation*}
When $(s,y)$ is sufficiently close to $(t,0)$,
we have $\tau(y) = y < s$.
Thus
\begin{equation*}
0 \leq -\zeta(s,y) \leq \max\{0, y\lambda^*(-1)\} \to 0,
\end{equation*}
and consequently 
\begin{equation*}
\lim\limits_{(s,y) \to (t,0)} Z(s,y) = 0.
\end{equation*}

For the initial condition,
let $(s,y) \to (0,x)$ with $x \geq 0$.
Taking the constant path $\gamma \equiv y$ gives
\begin{equation}\label{upper_estimate}
-\zeta(s,y) \leq \max\{0, s\lambda^*(0) + py\}.
\end{equation}
On the other hand,
by Fenchel's inequality,
for any admissible path $\gamma \in \Bcal$,
if $\tau(\gamma) \geq s$,
then
\begin{equation*}
\int_{0}^{s}\lambda^*(\gamma'(r))dr + p \gamma(s) \geq py - s \lambda(-p).
\end{equation*} 
If $\tau(\gamma) < s$,
then
\begin{equation*}
\int_{0}^{\tau(\gamma)} \lambda^*(\gamma'(r))dr
\geq py - \tau(\gamma)\lambda(-p) \geq py - s \lambda(-p),
\end{equation*}
because $\lambda(-p) > 0$.
Taking the supremum in time and then the infimum in $\gamma$ yields
\begin{equation}\label{lower_estimate}
-\zeta(s,y) \geq py - s \lambda(-p).
\end{equation}
Combing \eqref{upper_estimate} and \eqref{lower_estimate} gives
\begin{equation*}
\lim\limits_{(s,y) \to (0,x)}\zeta(s,y) = -px.
\end{equation*}

The above boundary limits give the same parabolic boundary values for $\overline{\zeta}$ and $\underline{\zeta}$.
Moreover,
the constant path also gives
\begin{equation*}
0 \leq -\zeta(t,x) \leq \max\{0, t\lambda^*(0) + px\} \leq C_T(1+x),
\qquad 0 \leq t \leq T,
\end{equation*}
which is the linear growth needed for comparison.
By the comparison principle for \eqref{H_J} on $(0,T) \times (0,\infty)$, we obtain
\begin{equation*}
\overline{\zeta} \leq \underline{\zeta}.
\end{equation*}
Since always $\underline{\zeta} \leq \overline{\zeta}$,
we conclude that $\zeta = \underline{\zeta} = \overline{\zeta}$ is the viscosity solution of \eqref{H_J}.
\end{proof}

Since the Hamiltonian is independent of $t$ and $x$, the representation formula can be simplified as follows.

\begin{theorem}
	We have
	\begin{equation}\label{solution}
		\zeta(t,x) 
		= -\inf_{y \in \RR_+, \ 0< s < t}
		\left\{\max\{0, t\lambda^*(\frac{y-x}{t}) + p_0^+y\}, \max\{0, (t-s)\lambda^*(\frac{-x}{t-s})\}\right\}.
	\end{equation}
\end{theorem}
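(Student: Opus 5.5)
The plan is to read \eqref{solution} as
\[
-\zeta(t,x)=\min\Big\{\inf_{y\geq 0}\max\{0,\,t\lambda^*(\tfrac{y-x}{t})+p_0^+y\},\ \inf_{0<s<t}\max\{0,\,(t-s)\lambda^*(\tfrac{-x}{t-s})\}\Big\},
\]
and to derive it directly from the representation formula \eqref{solutionformer} of \thref{Lax formula}. Write $V(t,x):=-\zeta(t,x)$ for the infimum over paths. I will split the admissible paths $\gamma\in\Bcal$ with $\gamma(0)=x$ into two classes:
\begin{itemize}
\item those with $\tau(\gamma)\geq t$, which do not reach $0$ before time $t$;
\item those with $\tau(\gamma)<t$.
\end{itemize}
I then show that the infimum over each class equals the corresponding term in the minimum. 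There are two inequalities to prove for each class. Straight-line paths give the upper bound, and Jensen's inequality for the convex function $\lambda^*$ gives the lower bound. This works because the Hamiltonian is independent of $(t,x)$.

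\emph{Upper bound.} For the first class, fix $y\geq 0$ and take $\gamma(s)=x+s\frac{y-x}{t}$ on $[0,t]$, extended by the constant $y$ afterwards.
\begin{itemize}
\item Since $x>0$ and $y\geq0$, the path stays in $\RR_+$ on $[0,t)$, so $\tau(\gamma)\geq t$. The case $y=0$ gives $\tau(\gamma)=t$, and then the indicator term still applies at $a=t$.
\item For $a<t$ the cost is $a\lambda^*(q)$ with $q=\frac{y-x}{t}$. This is linear in $a$, so its supremum over $[0,t)$ is at most $\max\{0,t\lambda^*(q)\}$.
\item At $a=t$ the cost is $t\lambda^*(q)+p_0^+y$, which is at least $t\lambda^*(q)$ because $p_0^+y\geq0$.
\end{itemize}
The supremum over $a$ is therefore exactly $\max\{0,t\lambda^*(q)+p_0^+y\}$.

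For the second class, fix $s\in(0,t)$ and take the straight path from $x$ to $0$ on $[0,t-s]$, constant afterwards. Then $\tau(\gamma)=t-s<t$, the indicator term never activates, and the cost is $\sup_{a\in[0,t-s]}a\lambda^*(\frac{-x}{t-s})=\max\{0,(t-s)\lambda^*(\frac{-x}{t-s})\}$. Taking infima over $y$ and $s$ gives $V\leq$ the right-hand side.

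\emph{Lower bound.} Let $\gamma$ be arbitrary.
\begin{itemize}
\item If $\tau(\gamma)\geq t$, put $y=\gamma(t)\geq0$. The supremum over $a$ is at least the values at $a=0$ and at $a=t$. Jensen's inequality gives $\int_0^t\lambda^*(\gamma')\,ds\geq t\lambda^*(\frac{y-x}{t})$. Hence the cost of $\gamma$ is at least $\max\{0,t\lambda^*(\frac{y-x}{t})+p_0^+y\}$.
\item If $\tau:=\tau(\gamma)<t$, set $s=t-\tau\in(0,t]$. Here $s<t$ is needed, which holds since $\tau>0$ by continuity and $x>0$. Using $a=0$ and $a=\tau$, together with Jensen's inequality on $[0,\tau]$ and $\gamma(\tau)=0$, the cost is at least $\max\{0,\tau\lambda^*(\frac{-x}{\tau})\}$.
\end{itemize}
Taking the infimum over $\gamma$ yields $V\geq$ the right-hand side, which proves the identity.

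The main point needing care is bookkeeping rather than analysis. One must interpret the pair inside the infimum correctly as a minimum of two infima. One must also check the borderline cases: $y=0$ with $\tau=t$, which is handled by the first class, and the fact that $a=t$ is admissible only when $\tau(\gamma)\geq t$. Finally, Jensen's inequality requires $\lambda^*(\gamma')\in L^1$; if the integral is infinite the cost is $+\infty$ and the bound is trivial.
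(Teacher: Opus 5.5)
Your proposal is correct and follows essentially the same route as the paper. Straight-line paths (to $(t,y)$, and to the boundary point $0$ at time $t-s$) give one inequality, and Jensen's inequality for the convex $\lambda^*$ on $[0,t\wedge\tau(\gamma)]$, split according to whether $\tau(\gamma)\geq t$ or $\tau(\gamma)<t$, gives the other; your handling of the borderline cases ($y=0$, $\tau(\gamma)>0$ since $x>0$, and an infinite cost integral) is slightly more careful than the paper's.
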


\begin{proof}
	Fix $(t,x) \in (0,\infty) \times (0,\infty)$. 
	For each $y \in \mathbb{R}_+$, 
	let us consider the path $\gamma$ as the straight line segment connecting $(0,x)$ and $(t,y)$, 
	that is
	\begin{equation*}
		\gamma(r) 
		= x + r \frac{y-x}{t}  \quad 
		\text{ for all } r \in [0,t].
	\end{equation*} 
	Then \eqref{solutionformer} gives
	\begin{equation*}
		\zeta(t,x) 
		\geq - \sup_{a \in [0,t]}(a\lambda^*(\frac{y-x}{t}) 
		+ 1_{\{a=t\}}p_0^+y) 
		= -\max\{0, t\lambda^*(\frac{y-x}{t}) + p_0^+y\} \text{ for any }y \in \mathbb{R}_+.
	\end{equation*}
	For any $s \in (0,t)$,
	consider 
	\begin{equation*}
		\tilde{\gamma}(r) = x - \frac{x}{t-s}r, 
		\quad r \in [0,t-s].
	\end{equation*}
	Then
	\begin{equation*}
		\zeta(t,x) 
		\geq -\inf_{y \in \mathbb{R}_+, \ 0< s < t}
		\left\{\max\{0, t\lambda^*(\frac{y-x}{t}) + p_0^+y\}, 
		\max\{0, (t-s)\lambda^*(\frac{-x}{t-s})\}\right\}. 
	\end{equation*}
	On the other hand, 
	if $\gamma$ is any admissible path with $\gamma(0) = x$,
	then by Jensen's inequality, 
	we have
	\begin{equation*}
		\frac{1}{t \wedge \tau(\gamma)}\int_{0}^{t \wedge \tau(\gamma)}\lambda^*(\gamma'(r))dr 
		\geq \lambda^*\left(\frac{1}{t \wedge \tau(\gamma)}\int_{0}^{t \wedge \tau(\gamma)}\gamma'(r)dr\right).
	\end{equation*}
	If $\tau(\gamma) \geq t$,
	denote $y = \gamma(t) \geq 0$. 
	Note that
	\begin{equation*}
		\int_{0}^{t}\gamma'(r)dr 
		= \gamma(t) - \gamma(0) 
		= y - x.
	\end{equation*}
	Hence
	\begin{equation*}
		t\lambda^*(\frac{y-x}{t}) + p_0^+y
		\leq \int_{0}^{t}\lambda^*(\gamma'(s))ds 
		+ p_0^+\gamma(t) 
		\leq \sup_{a \in [0,t \wedge \tau(\gamma)]}(\int_{0}^{a}\lambda^*(\gamma'(s))ds 
		+ 1_{\{a=t\}}p_0^+\gamma(t)).
	\end{equation*}
	If $\tau(\gamma) < t$,
	denote $s =t- \tau(\gamma)$.
	Note that
	\begin{equation*}
		\int_{0}^{t-s}\gamma'(r)dr = \gamma(t-s) - \gamma(0) = -x.
	\end{equation*}
	Thus
	\begin{equation*}
		(t-s)\lambda^*(\frac{-x}{t-s}) 
		\leq \int_{0}^{t-s}\lambda^*(\gamma'(s))ds
		\leq \sup_{a \in [0,t \wedge \tau(\gamma)]}(\int_{0}^{a}\lambda^*(\gamma'(s))ds 
		+ 1_{\{a=t\}}p_0^+\gamma(t)). 
	\end{equation*}
	Furthermore, note that
	\begin{equation*}
		\sup_{a \in [0,t \wedge \tau(\gamma)]}(\int_{0}^{a}\lambda^*(\gamma'(s))ds 
		+ 1_{\{a=t\}}p_0^+\gamma(t)) 
		\geq 0.
	\end{equation*}
	From this, we get
	\begin{equation*}
		\begin{aligned}
			\max\{0,A t\lambda^*(\frac{y-x}{t}) + p_0^+y\}
			&= \sup_{a \in [0, t]}\int_{0}^{a}\lambda^*(\frac{y-x}{t})dr + 1_{\{a = t\}}p_0^+y \\
			&\leq \sup_{a \in [0,t \wedge \tau(\gamma)]}(\int_{0}^{a}\lambda^*(\gamma'(s))ds 
			+ 1_{\{a=t\}}p_0^+\gamma(t))
		\end{aligned}
	\end{equation*}
	if $\tau(\gamma) \geq t$, and 
	\begin{equation*}
		\begin{aligned}
			\max\{0, (t-s)\lambda^*(\frac{-x}{t-s})\}
			&= \sup_{a \in [0, t-s]}\int_{0}^{a}\lambda^*(\frac{-x}{t-s})dr \\
			&\leq \sup_{a \in [0,t \wedge \tau(\gamma)]}(\int_{0}^{a}\lambda^*(\gamma'(s))ds 
			+ 1_{\{a=t\}}p_0^+\gamma(t))
		\end{aligned}
	\end{equation*}
	if $\tau(\gamma) < t$.
	Thus the conclusion follows.
\end{proof}

We may further reduce the solution of \eqref{H_J} to the following conclusion. 

\begin{theorem}\label{thm:zeta}
	There holds
	\begin{equation*}
		\begin{aligned}
			\zeta(t,x) =
			\begin{cases}
				-\max \{0, p_0^+x - \lambda(-p_0^+)t\}, \quad 
				&t > 0, \ x \geq -\max \{D\lambda(-p_0^+)\}t, \\
				-\max \{0, t\lambda^*(-\frac{x}{t})\}, \quad
				&t > 0, \ 0 < x < -\max \{D\lambda(-p_0^+)\}t.
			\end{cases}
		\end{aligned}
	\end{equation*}
\end{theorem}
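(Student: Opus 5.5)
The plan is to start from the representation formula \eqref{solution} and evaluate the two infima in closed form. Fix $(t,x)$ with $t,x>0$, write $p:=p_0^+$, and set $q_0:=\max D\lambda(-p)$. This is well defined because $D\lambda(-p)$ is nonempty and closed, and it is bounded since it is the subdifferential of the finite convex function $\lambda$. Since $r\mapsto\max\{0,r\}$ is nondecreasing, \eqref{solution} reads $-\zeta(t,x)=\max\{0,\min\{I_1,I_2\}\}$, where
\begin{equation*}
I_1:=\inf_{y\geq0}\Big(t\lambda^*\big(\tfrac{y-x}{t}\big)+py\Big),\qquad
I_2:=\inf_{0<\tau<t}\tau\lambda^*\big(-\tfrac{x}{\tau}\big).
\end{equation*}
(For $I_2$ I substitute $\tau=t-s$.)

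\textbf{Step 1: computing $I_1$.} Substitute $q=(y-x)/t$, so the constraint $y\ge0$ becomes $q\geq -x/t$ and
\begin{equation*}
t\lambda^*(q)+py=t\big(\lambda^*(q)+pq\big)+px .
\end{equation*}
Note that $\lambda^*(q)+pq=-\big((-p)q-\lambda^*(q)\big)$. By Lemma \ref{lem:prop1} (ii) and the definition of $D\lambda$, the concave function $q\mapsto(-p)q-\lambda^*(q)$ attains its maximum $\lambda(-p)$ exactly on $D\lambda(-p)$. By \thref{monotonicity} (ii), applied with $-p$ and $q_0$, this function is non-increasing on $(q_0,+\infty)$; equivalently, $\lambda^*(q)+pq$ is non-decreasing there.
\begin{itemize}
\item If $x\geq -q_0t$, then $q_0$ is admissible, and $I_1=px-\lambda(-p)t$.
\item If $0<x<-q_0t$, then the constraint set is $[-x/t,+\infty)\subset(q_0,+\infty)$, so the infimum is attained at the endpoint $q=-x/t$, that is, at $y=0$. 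This gives $I_1=t\lambda^*(-x/t)$.
\end{itemize}

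\textbf{Step 2: showing $I_2$ never lowers the minimum.} In the first region, Fenchel's inequality with the pair $(-p,-x/\tau)$ gives $\lambda^*(-x/\tau)\geq px/\tau-\lambda(-p)$. Hence
\begin{equation*}
\tau\lambda^*(-x/\tau)\geq px-\tau\lambda(-p)\geq px-t\lambda(-p)=I_1 ,
\end{equation*}
where the last inequality uses $\tau<t$ and $\lambda(-p)>0$ from (\textbf{A2}). So $\min\{I_1,I_2\}=I_1$ there.

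In the second region I will show that $g(\tau):=\tau\lambda^*(-x/\tau)$ is non-increasing on $(0,t]$, which gives $I_2\geq g(t)=I_1$. For $\tau_1<\tau_2$, pick $\bar p$ with $\lambda^*(-x/\tau_2)=\bar p(-x/\tau_2)-\lambda(\bar p)$; such a $\bar p$ exists by Lemma \ref{lem:prop1} (i). Fenchel's inequality at $\tau_1$ gives
\begin{equation*}
g(\tau_1)\geq -\bar p x-\tau_1\lambda(\bar p)\geq -\bar px-\tau_2\lambda(\bar p)=g(\tau_2),
\end{equation*}
again because $\lambda(\bar p)>0$ by (\textbf{A2}). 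This argument is valid in either region, so it can be presented once.

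\textbf{Step 3: assembling.} Combining the two steps gives $\zeta=-\max\{0,px-\lambda(-p)t\}$ for $x\ge -q_0t$ and $\zeta=-\max\{0,t\lambda^*(-x/t)\}$ for $0<x<-q_0t$, which is the claimed formula.

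The main point needing care is Step 1 when $D\lambda(-p)$ is not a singleton. Choosing $q_0$ as the \emph{largest} element of $D\lambda(-p)$ is what makes the region split exact: the unconstrained minimizer is admissible precisely when $-x/t\le q_0$, and monotonicity applies on the whole half-line $(q_0,\infty)$. On the boundary $x=-q_0t$ the two expressions coincide, because $t\lambda^*(q_0)+pq_0t=-\lambda(-p)t$. So the closed/open choice of regions in the statement is consistent.
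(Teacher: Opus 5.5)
Your proposal is correct and follows essentially the same route as the paper. You dominate the boundary-exit term by the $y=0$ choice in the first infimum, then substitute $q=(y-x)/t$ and use the monotonicity of $q\mapsto -p_0^+q-\lambda^*(q)$ to the right of $\max D\lambda(-p_0^+)$. The only difference is cosmetic: you get the inequality $\tau\lambda^*(-x/\tau)\ge t\lambda^*(-x/t)$ for $\tau<t$ from Fenchel's inequality and $\lambda>0$, while the paper derives it from the convexity of $\lambda^*$ together with $\lambda^*(0)<0$; your separate first-region argument in Step 2 is redundant but harmless.
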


\begin{proof}
	We claim that
	\begin{equation*}
		\zeta(t,x) = -\inf_{y \in \mathbb{R}_+} \max\{0, t\lambda^*(\frac{y-x}{t}) + p_0^+y\}.
	\end{equation*}
	In fact,
	by \eqref{solution},
	we only need to prove that
	\begin{equation*}
		\inf_{0 < s < t}\max\{0, (t-s)\lambda^*(\frac{-x}{t-s})\} = \max \{0, t\lambda^*(\frac{-x}{t})\}.
	\end{equation*}
	Note that $\lambda^*$ is convex and $\lambda^*(0) < 0$,
	thus we have
	\begin{equation*}
		\frac{t-s}{t}\lambda^*(\frac{-x}{t-s})
		> \frac{s}{t}\lambda^*(0) + \frac{t-s}{t}\lambda^*(\frac{-x}{t-s}) 
		\geq \lambda^*(\frac{-x}{t}).
	\end{equation*}
	Therefore
	\begin{equation*}
		\begin{aligned}
			\zeta(t,x) 
			&= -\inf_{y \in \mathbb{R}_+} \max\{0, t\lambda^*(\frac{y-x}{t}) + p_0^+y\} \\
			&=-\inf_{q \geq -\frac{x}{t}}\max\{0, t\lambda^*(q) + p_0^+(x+qt)\} \\
			&= -\inf_{q \geq -\frac{x}{t}}\max\{0, t(\lambda^*(q) + p_0^+q) + p_0^+x\} \\
			&= 
			\begin{cases}
				-\max\{0, p_0^+x - \lambda(-p_0^+)t\}, 
				&\hbox{ if } t > 0, \ x \geq -\max\{D\lambda(-p_0^+)\}t \\
				-\max\{0, t\lambda^*(-\frac{x}{t})\}, 
				&\hbox{ if } t > 0 , \ 0 \leq x < -\max\{D\lambda(-p_0^+)\}t.
			\end{cases}
		\end{aligned}
	\end{equation*}
	This completes the proof.
\end{proof}
    
By comparison, we have
 $$Z_*(t,l(x)) = \zeta_*(t,x)= \zeta(t,x) = \zeta^*(t,x) = Z^*(t,l(x)).$$
    From now on, we denote the viscosity solution $Z^*(t,x)=Z_*(t,x)$ of \eqref{eq:hj eq} by $Z(t,x)$, which is a continuous function. 
    Moreover, from $Z^*=Z_*$    and the definitions of these two quantities, we have the following corollary.
    
    \begin{corollary}\label{cor:locally uniformly}
    	One has
    	$$\lim\limits_{\varepsilon\to0}	Z_\varepsilon(t,x)=Z(t,x)  \text{ locally uniformly on }\overline{\mathbb R^2_+}.$$
    \end{corollary}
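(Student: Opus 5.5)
This is the standard fact that when the upper and lower half-relaxed limits coincide, the convergence is locally uniform. The plan is to take the identification $Z^*=Z_*=Z$ as given (it comes from the comparison principle for \eqref{H_J}, pulled back through the homeomorphism $l$ when $p_0^+>0$, or directly in \eqref{eq:hj eq} when $p_0^+=0$) and then argue by contradiction.

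\textbf{Step 1: continuity of $Z$.} Since $Z^*\in USC(\overline{\mathbb R^2_+})$ and $Z_*\in LSC(\overline{\mathbb R^2_+})$, their common value $Z$ is continuous on $\overline{\mathbb R^2_+}$. This holds up to the boundary, because Lemma \ref{lem:well defined and initial value} and Step 1 of the proof of Theorem \ref{thm:homo- eq} show that $Z^*$ and $Z_*$ agree at $t=0$ and at $x=0$.

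\textbf{Step 2: contradiction setup.} Fix a compact set $K\subset\overline{\mathbb R^2_+}$. Suppose the convergence is not uniform on $K$. Then there exist $\eta>0$, $\epsilon_n\to0$ and points $(t_n,x_n)\in K$ with
$$|Z_{\epsilon_n}(t_n,x_n)-Z(t_n,x_n)|\ge\eta.$$
By compactness, after passing to a subsequence, $(t_n,x_n)\to(t_0,x_0)\in K$.

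\textbf{Step 3: the two cases.} If $Z_{\epsilon_n}(t_n,x_n)\ge Z(t_n,x_n)+\eta$ along a subsequence, take $\limsup$. The definition of $Z^*$ (the limsup over $(s,y)\to(t_0,x_0)$ and $\epsilon\to0$) together with the continuity of $Z$ gives
$$Z^*(t_0,x_0)\ge Z(t_0,x_0)+\eta,$$
which contradicts $Z^*=Z$. The case $Z_{\epsilon_n}(t_n,x_n)\le Z(t_n,x_n)-\eta$ is symmetric and uses $Z_*$.

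\textbf{Step 4: finiteness.} The sequence $Z_{\epsilon_n}(t_n,x_n)$ stays bounded on compact sets by the two-sided bounds obtained in the proof of Lemma \ref{lem:well defined and initial value}. So all the limsup and liminf values above are finite, and the argument in Step 3 is legitimate.

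No step here is a real obstacle. The substantive input is the identification $Z^*=Z_*$, which is taken as given; the only delicate point is that the continuity of $Z$ must hold up to the boundary $t=0$, $x=0$, which Step 1 covers.
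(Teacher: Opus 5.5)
Your proposal is correct and follows the paper's route. The paper deduces the corollary in one line from the identification $Z^*=Z_*$ (obtained by comparison) and the definitions of the half-relaxed limits; your continuity and compactness/contradiction argument, including the check at the boundary $t=0$, $x=0$, simply writes out that standard step in full.
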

   
    Below, we obtain the explicit expression for the solution of \eqref{eq:hj eq} for all $p_0^+\geq0$.  
        
\begin{theorem}\label{thm:the sol}
	$Z(t,x)=Z^*(t,x)=Z_*(t,x)$ is the viscosity solution  of \eqref{eq:hj eq}.  Moreover, \\
	(i) If $p_0^+ > p_+$, 
	then
	\begin{equation*}
		\begin{aligned}
		Z(t,l(x))=\zeta(t,x) = 
			\begin{cases}
				\lambda(-p_0^+)t - p_0^+x, \quad 
				&t > 0, \ x > -\max\{D\lambda(-p_0^+)\}t; \\
				-t\lambda^*(-\frac{x}{t}), \quad 
				&t > 0, \ \lambda(-p_+)t/p_+ < x \leq -\max\{D\lambda(-p_0^+)\}t; \\
				0,  \quad 
				&t > 0, \ 0 < x \leq \lambda(-p_+)t/p_+.
			\end{cases}
		\end{aligned}
	\end{equation*}
(ii) If $0 < p_0^+ \leq p_+$, 
then
\begin{equation*}
	\begin{aligned}
		Z(t,l(x))=\zeta(t,x) = 
		\begin{cases}
			\lambda(-p_0^+)t - p_0^+x, \quad
			&t > 0, \ x > \lambda(-p_0^+)t/p_0^+; \\
			0,  \quad 
			&t > 0, \ 0 < x \leq \lambda(-p_0^+)t/p_0^+.
		\end{cases}
	\end{aligned}
\end{equation*}
(iii) If $p_0^+=0$, then
\begin{equation*}
	Z(t,x)=\min\{h(0)-h(x)+\lambda(0)t,0\}\ \forall t\geq0, x\geq0.
\end{equation*}
\end{theorem}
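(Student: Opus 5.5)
For $p_0^+>0$ the plan is to start from Theorem \ref{thm:zeta}, which already gives $\zeta(t,x)=Z(t,l(x))$, and make the two branches explicit using the shape of $\lambda$ and $\lambda^*$. Write $q_0:=\max D\lambda(-p_0^+)$. For $x\ge -q_0 t$ we have $\zeta=-\max\{0,p_0^+x-\lambda(-p_0^+)t\}$. This equals $\lambda(-p_0^+)t-p_0^+x$ when $x>\lambda(-p_0^+)t/p_0^+$, and $0$ otherwise.

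On the interval $0<x<-q_0t$ we have $\zeta=-\max\{0,t\lambda^*(-x/t)\}$. Lemma \ref{lem:prop1}(iii) then gives $\lambda^*(-x/t)>0$ exactly when $-x/t<-\lambda(-p_+)/p_+$, that is, when $x>\lambda(-p_+)t/p_+$. It also gives $\lambda^*(-x/t)\le 0$ on the rest of this range, at least while $-x/t\le 0$, which always holds.

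The key step is to locate $-q_0$ relative to the two thresholds $\lambda(-p_0^+)/p_0^+$ and $\lambda(-p_+)/p_+$. Since $q_0\in D\lambda(-p_0^+)$, we have $\lambda^*(q_0)=-p_0^+q_0-\lambda(-p_0^+)$. The point $p=-p_+$ minimizes $\lambda(-p)/p$, so the line through the origin and $(-p_+,\lambda(-p_+))$ is a supporting line of $\lambda$, with slope $-\lambda(-p_+)/p_+\in D\lambda(-p_+)$. The monotonicity in Lemma \ref{monotonicity}(i) then splits into two cases.

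\textbf{Case $p_0^+>p_+$.} Here $-p_0^+<-p_+$, so $q_0\le -\lambda(-p_+)/p_+$, i.e. $-q_0\ge \lambda(-p_+)/p_+$. One also checks $-q_0\le \lambda(-p_0^+)/p_0^+$. This follows from $\lambda^*(q_0)\ge \lambda^*(-\lambda(-p_+)/p_+)=0$, using that $\lambda^*$ is nonincreasing there, which rearranges to $p_0^+(-q_0)\le\lambda(-p_0^+)$. Hence on $x\ge -q_0t$ the first branch is $\lambda(-p_0^+)t-p_0^+x$, except possibly on $[-q_0t,\lambda(-p_0^+)t/p_0^+]$. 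I expect the main obstacle to be showing that this last interval is degenerate or harmless.

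My approach there is to argue via continuity and concavity. At $x=-q_0t$ the two formulas must agree by continuity of $\zeta$. Fenchel equality gives $-t\lambda^*(q_0)=\lambda(-p_0^+)t-p_0^+x$ at $x=-q_0t$, and this is $\le 0$. So $\zeta$ there equals $\lambda(-p_0^+)t-p_0^+x$ when that is negative. Moreover $-\max\{0,\cdot\}$ of it is exactly $\lambda(-p_0^+)t-p_0^+x$ whenever $x\ge -q_0t$ and $\lambda^*(q_0)>0$. In the degenerate case $\lambda^*(q_0)=0$, both thresholds coincide and the formula still holds with the convention at the boundary. Combining with the $\lambda^*$ branch yields (i).

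\textbf{Case $0<p_0^+\le p_+$.} Now $q_0\ge-\lambda(-p_+)/p_+$, so on $0<x<-q_0t$ we have $\lambda^*(-x/t)\le0$ and $\zeta=0$. On $x\ge -q_0t$ the first branch gives the stated formula. It remains to check $-q_0t\le \lambda(-p_0^+)t/p_0^+$. This holds because $\lambda^*(q_0)\le 0$ in this range by Lemma \ref{lem:prop1}(iii), and $\lambda^*(q_0)=-p_0^+q_0-\lambda(-p_0^+)$. This gives (ii).

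For $p_0^+=0$ the equation \eqref{eq:hj eq} reduces to $\max\{\partial_tZ-\lambda(0),Z\}=0$ with $Z(t,0)=0$, which is a pure ODE in $t$ at each $x$. I will verify directly that $\min\{h(0)-h(x)+\lambda(0)t,0\}$ is a continuous viscosity solution with the right initial and boundary data. Where $Z<0$ it is smooth with $\partial_tZ=\lambda(0)$. At the kink, the min of two solutions is a subsolution, and supersolution tests there force $Z=0$, so the $\max$ is $\ge0$. Then $Z^*\le Z\le Z_*$ follows from Theorem \ref{thm:homo- eq} by a comparison principle for this degenerate equation. This comparison is elementary: along each fixed $x$, compare a sub- and supersolution of $\partial_t w\le\lambda(0)$ and $\partial_t w\ge\lambda(0)$ capped by $0$. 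Together with $Z_*\le Z^*$ this gives equality and (iii).
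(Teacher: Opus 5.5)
Your route is the paper's: (i) and (ii) are read off from Theorem~\ref{thm:zeta}, which the paper says they ``follow immediately'' from. Part (iii) comes from checking that $\min\{h(0)-h(x)+\lambda(0)t,0\}$ is a viscosity solution and invoking comparison. Parts (ii) and (iii) are fine as sketched, but case (i) contains a sign error that you should correct.

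\textbf{The sign error in case (i).} Let $q_0=\max D\lambda(-p_0^+)$. The Fenchel identity reads $\lambda^*(q_0)=-p_0^+q_0-\lambda(-p_0^+)$. So the (correct) fact $\lambda^*(q_0)\ge 0$ is equivalent to $p_0^+(-q_0)\ge\lambda(-p_0^+)$, i.e.\ $-q_0\ge\lambda(-p_0^+)/p_0^+$, not $\le$. For instance, with $\lambda(p)=p^2+1$ and $p_0^+=2>p_+=1$, one has $-q_0=4>5/2=\lambda(-2)/2$.

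\textbf{Consequences.} The ``exceptional interval'' $[-q_0t,\lambda(-p_0^+)t/p_0^+]$ is empty, or a single point when $\lambda^*(q_0)=0$. For every $x\ge -q_0t$ one gets $p_0^+x-\lambda(-p_0^+)t\ge t\lambda^*(q_0)\ge 0$, which is the first branch directly. So there is no obstacle, and no need for the continuity/concavity detour.

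The inequality as you wrote it would be fatal rather than harmless. If $-q_0<\lambda(-p_0^+)/p_0^+$ held strictly, Theorem~\ref{thm:zeta} would give $\zeta=0$ on $(-q_0t,\lambda(-p_0^+)t/p_0^+)$, contradicting the first branch of (i). Your follow-up paragraph actually uses the correct direction: the bracket equals $t\lambda^*(q_0)\ge 0$ at $x=-q_0t$ and increases in $x$. So the argument is recoverable, but as written it is internally inconsistent.

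\textbf{A smaller point in (ii).} Lemma~\ref{lem:prop1}(iii) only describes $\lambda^*$ on $(-\infty,0]$, whereas $q_0$ can be positive. This happens, e.g., when the minimum of $\lambda$ lies to the left of $-p_0^+$, as for $\lambda(p)=p^2+bp+c$ with $0<p_0^+<b/2$. In that case, use the Fenchel identity directly, which gives $\lambda^*(q_0)=-p_0^+q_0-\lambda(-p_0^+)<0$. Alternatively, observe that $-q_0<0$, so the needed inequality $-q_0\le\lambda(-p_0^+)/p_0^+$ is trivial and the range $0<x<-q_0t$ is empty.
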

\begin{proof}
    The statements (i) and (ii) follow from Theorem \ref{thm:zeta} immediately.		
    One can check that the function $Z$ in (iii) is a viscosity solution  of \eqref{eq:hj eq} with $p_0^+=0$. 
    Then by  comparison principle, 	$Z^*=Z_*=Z$ is the viscosity solution.
\end{proof}

	  \section{The spreading speeds for slowly decaying initial data}
	  
	In this section, we focus primarily on the speed in the positive direction. The conclusions for the spreading speed in the negative direction can be derived analogously.
	Let $u$ be the solution to \eqref{eq:main eq} with initial data $u_0\in\mathcal U_{p_0^-,p_0^+}$.
	
	\subsection{The spreading speeds with respect to a subclass of $\mathcal U_{p_0^-,p_0^+}$}
	In this section, we will prove that Theorem \ref{the:1} holds when the initial datum $u_0(x)\in\mathcal U_{p_0^-,p_0^+}$, where $u_0=e^{-h(x)}$ with  $h$ satisfying \eqref{eq:intial data} and \eqref{eq:h}.
	Therefore, we can directly adopt the scaling $\psi_\epsilon$ given by \eqref{eq:def of psi_varep}.
	Recall that $	u_\epsilon(t,x)=u\left(\frac{t}{\epsilon},\psi_\epsilon(x)\right) \text{ and }Z_\epsilon(t,x):=\epsilon\ln u_\epsilon(t,x)$ for $t\geq0$ and $x\geq0$.
	One has
	\begin{lemma}\label{lem:er fen}
		The following are valid:\\
		(i) $\liminf\limits_{\epsilon\to0}u_\epsilon(t,x)>0 \text{ locally uniformly in }\text{int}\{(t,x)|Z(t,x)=0, t>0, x>0\}.$\\
		(ii) $\limsup\limits_{\epsilon\to0}u_\epsilon(t,x)=0 \text{ locally uniformly in } \{(t,x)|Z(t,x)<0, t>0, x>0\}.$
	\end{lemma}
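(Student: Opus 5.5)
Part (ii) follows directly from Corollary \ref{cor:locally uniformly}, so I would do it first. Let $K$ be a compact subset of $\{Z<0,\ t>0,\ x>0\}$. By continuity, $Z\le -2\eta$ on $K$ for some $\eta>0$. Local uniform convergence $Z_\epsilon\to Z$ then gives $Z_\epsilon\le-\eta$ on $K$ for all small $\epsilon$. Hence $u_\epsilon=e^{Z_\epsilon/\epsilon}\le e^{-\eta/\epsilon}\to0$ uniformly on $K$.

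Part (i) is the classical Evans--Souganidis step: $Z=0$ alone only gives $u_\epsilon\ge e^{-o(1)/\epsilon}$, which is not a positive lower bound. I would argue by contradiction. Fix $(t_0,x_0)$ in the interior of $\{Z=0\}$. Suppose there are $\epsilon_n\to0$ and $(t_n,x_n)\to(t_0,x_0)$ with $u_{\epsilon_n}(t_n,x_n)\to0$. Choose a test function
\[
\phi(t,x)=-|x-x_0|^2-(t-t_0)^2 .
\]
Since $Z\equiv0$ on a small closed ball $\overline{B}$ around $(t_0,x_0)$, the function $Z-\phi$ has a strict minimum on $\overline{B}$ at $(t_0,x_0)$. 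As in Step 2 of the proof of Theorem \ref{thm:homo- eq}, I would correct $\phi$ by $w_{\epsilon}=\epsilon\ln\varphi(\psi_\epsilon)$, where $\varphi\in\mathcal A_{-\infty}$ satisfies $L_p\varphi\ge(\lambda(p)-\delta)\varphi$. Here $p=\frac{p_0^+}{h'(x_0)}\partial_x\phi(t_0,x_0)=0$, so
\[
L_0\varphi\ge(\lambda(0)-\delta)\varphi,\qquad \lambda(0)-\delta>0 \text{ by (\textbf{A2})}.
\]
Local uniform convergence of $Z_\epsilon$, together with $w_\epsilon\to0$ locally uniformly, yields minimum points $(\tilde t_n,\tilde x_n)\to(t_0,x_0)$ of $Z_{\epsilon_n}-\phi-w_{\epsilon_n}$. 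At these points, equation \eqref{eq:z_varep}, combined with the inequality for $w_\epsilon$ (the reverse of \eqref{delta error}), gives
\[
\partial_t\phi\ \ge\ \lambda(0)-\delta-o(1)+\Big(\frac{f(\psi_{\epsilon_n},u_{\epsilon_n})}{u_{\epsilon_n}}-\partial_sf(\psi_{\epsilon_n},0)\Big).
\]
The $o(1)$ terms are controlled by Proposition \ref{prop:psi}(iii),(iv), exactly as there. Since $\partial_t\phi(t_0,x_0)=0$, for large $n$ this forces
\[
\frac{f(\psi_{\epsilon_n},u_{\epsilon_n})}{u_{\epsilon_n}}\le\partial_sf(\psi_{\epsilon_n},0)-\tfrac{\lambda(0)}2 \quad\text{at }(\tilde t_n,\tilde x_n).
\]
By (\textbf{F3}), this inequality implies that $u_{\epsilon_n}(\tilde t_n,\tilde x_n)$ stays bounded below by some $\theta_0>0$ independent of $n$.

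It remains to transfer the lower bound from the minimum points $(\tilde t_n,\tilde x_n)$ back to the points $(t_n,x_n)$. I would use the minimality of $(\tilde t_n,\tilde x_n)$:
\[
\epsilon_n\ln u_{\epsilon_n}(t_n,x_n)\ \ge\ \epsilon_n\ln\theta_0+\phi(t_n,x_n)-\phi(\tilde t_n,\tilde x_n)+w_{\epsilon_n}(x_n)-w_{\epsilon_n}(\tilde x_n).
\]
This only gives $\epsilon\ln u_\epsilon\ge -o(1)$ again, which is not enough. So the contradiction has to be set up more carefully, and this is the main obstacle. I expect to adapt Evans--Souganidis: argue at the original scale with a Harnack inequality, since $\psi_\epsilon$ maps small $x$-neighbourhoods to regions that, at time $t/\epsilon$, correspond to spatial windows of size $O(1)$ after localisation. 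Concretely, I would
\begin{enumerate}
\item obtain a lower bound $u\ge\theta_0$ at one point $(\tilde t_n/\epsilon_n,\psi_{\epsilon_n}(\tilde x_n))$;
\item use a parabolic Harnack inequality (bounded coefficients, $f(x,u)/u$ bounded) to get $u\ge c\theta_0$ on a unit parabolic box ahead in time;
\item compare with the compactly supported solution from \cite[Theorem 1.2]{liang2021propagation}, which invades at a positive speed $c_0$ and so keeps $u\ge\theta_1$ on a neighbourhood growing linearly in $t/\epsilon$.
\end{enumerate}
Step 3 is legitimate because the minimum points can be chosen with times $\tilde t_n$ slightly smaller than $t_n$: shift $\phi$ by adding $-\kappa(t-t_0+\rho)$ so the minimum sits at an earlier time, then invade forward in time. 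The spatial discrepancy $|\psi_{\epsilon_n}(x_n)-\psi_{\epsilon_n}(\tilde x_n)|$ must be $o(1/\epsilon_n)$ in the case $p_0^+>0$ (true by Proposition \ref{prop:psi}(iii) since $\psi_\epsilon'\sim h'/(\epsilon p_0^+)$ only after using that the whole interior neighbourhood lies in $\{Z=0\}$). In the case $p_0^+=0$ I would instead use the explicit region from Theorem \ref{thm:the sol}(iii) and the subsolution $\underline u$ from Step 1 of Theorem \ref{thm:homo- eq}, which gives the lower bound directly on $\{h(x)-h(0)<\lambda(0)t\}$ after replacing $\delta$ by values approaching $\lambda(0)$. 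Checking these distance bounds in both cases is the delicate part.
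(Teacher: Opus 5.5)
\textbf{Part (ii).} Your argument is fine and is essentially the paper's: $Z_\epsilon\to Z$ locally uniformly, so $Z<0$ forces $u_\epsilon=e^{Z_\epsilon/\epsilon}\to0$.

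\textbf{Part (i).} There is a genuine gap at the transfer step. As you note yourself, with the test function centered at a fixed point $(t_0,x_0)$, minimality only gives $\epsilon_n\ln u_{\epsilon_n}(t_n,x_n)\ge -o(1)$, which is useless. The Harnack/invasion programme you propose instead is not carried out. In the case $p_0^+=0$ it would not work as described, for two reasons.
\begin{itemize}
\item There $\epsilon\psi_\epsilon'\to\infty$, so $|\psi_{\epsilon_n}(x_n)-\psi_{\epsilon_n}(\tilde x_n)|$ need not be $O(1/\epsilon_n)$. Invasion at a linear speed over a time $\rho/\epsilon_n$ therefore cannot bridge it.
\item The subsolution $u_2$ from Step 1 of the proof of Theorem \ref{thm:homo- eq} requires $2\delta<\inf_\Omega f(x,u_2)/u_2\le\inf_x\partial_sf(x,0)$. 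So $\delta$ cannot be pushed towards $\lambda(0)$, which in a heterogeneous medium is in general strictly larger than $\inf_x\partial_sf(x,0)$.
\end{itemize}

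\textbf{The missing idea.} Center the quadratic test function at the point where you want the bound. Take $\phi_n(t,x)=-|t-t_n|^2-|x-x_n|^2$, and let $(\tilde t_n,\tilde x_n)$ minimize $Z_{\epsilon_n}-\phi_n$ over a closed ball $\overline B\subset\mathrm{int}\{Z=0\}$. Since $\phi_n\le0=\phi_n(t_n,x_n)$,
\[
Z_{\epsilon_n}(t_n,x_n)\ \ge\ Z_{\epsilon_n}(\tilde t_n,\tilde x_n)-\phi_n(\tilde t_n,\tilde x_n)\ \ge\ Z_{\epsilon_n}(\tilde t_n,\tilde x_n).
\]
Hence $u_{\epsilon_n}(t_n,x_n)\ge u_{\epsilon_n}(\tilde t_n,\tilde x_n)$ exactly, with no $e^{o(1)/\epsilon}$ loss. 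This is what the paper does, for every center in a smaller ball, uniformly. Because $Z_\epsilon\to0$ uniformly on $\overline B$, you get $|(\tilde t_n,\tilde x_n)-(t_n,x_n)|\to0$. So the minimum is interior, and $\partial_t\phi_n,\partial_x\phi_n\to0$ there.

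\textbf{Drop the corrector.} For the same reason, $w_\epsilon$ is unnecessary. With vanishing gradient, every term of \eqref{eq:z_varep} other than $f(\psi_\epsilon,u_\epsilon)/u_\epsilon$ tends to $0$ by Proposition \ref{prop:psi}(iii)--(iv): $1/(\epsilon\psi_\epsilon')$ stays bounded and $1/(\epsilon(\psi_\epsilon')^2)\to0$. This gives $\limsup f/u\le0$ at the minimum points, and hence a uniform lower bound $u\ge\theta_0$ there by (\textbf{F1}) and (\textbf{F3}).

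If you keep $w_\epsilon$, the extra term $w_{\epsilon_n}(x_n)-w_{\epsilon_n}(\tilde x_n)$ is only bounded by $\epsilon_n\|\varphi'/\varphi\|_\infty|\psi_{\epsilon_n}(x_n)-\psi_{\epsilon_n}(\tilde x_n)|$. That is not $O(\epsilon_n)$ in general, so it reintroduces the loss. The exception is if you choose $\varphi$ bounded above and below, e.g.\ $e^{u_{\kappa,0}}$; then you lose only a constant factor.
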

	\begin{proof}
		Proof of (i): We only need to show that for any $(\tau,\xi)\in\text{int}\{(t,x)|Z(t,x)=0\}$, there exists $r>0$ such that
		$$\liminf\limits_{\epsilon\to0}u_\epsilon(t,x)>0 \text{ uniformly for }(t,x)\in B_r(\tau,\xi).$$
		Fix $(\tau,\xi)\in\text{int}\{(t,x)|Z(t,x)=0\}$. It follows from Theorem \ref{thm:homo- eq} that $Z_\epsilon=\epsilon\ln u_\epsilon(t,x)\to0$ on some neighborhood, say, $B_{2r}(t_0,x_0)$ for some $r$, uniformly.
		Now set $$\phi(t,x)=-({|t-t_{0}|}^{2}+{|x-x_{0}|}^{2}) \ \text{ for }(t_0,x_0)\in B_r(\tau,\xi).$$ 
		Then $Z_{\epsilon}(t,x)-\phi(t,x)$ reaches
		its minimum at some point, say $(t_{\epsilon},x_{\epsilon})$ (depends on $(t_0,x_0)$), over $B_{2r}(\tau,\xi)$. Moreover, it is easy to see that $\lim\limits_{\ep\to0}(t_{\epsilon},x_{\epsilon})=(t_{0},x_{0})$ uniformly with respect to $(t_0,x_0)\in B_r(\tau,\xi)$ since
		$Z_{\epsilon}(t,x)\to0$ as $\epsilon\to0$ uniformly in $B_{2r}(\tau,\xi)$.
		Without loss of generality, we may assume that $x_\epsilon\geq0$.
		Hence 	at $(t_{\epsilon},x_{\epsilon})$, we have
		$$\partial_tZ_{\epsilon}(t,x)={\partial}_{t}\phi(t,x),\ \partial_xZ_{\epsilon}(t,x)={\partial}_{x}\phi(t,x), \text{ and }
		\partial_{xx}Z_{\epsilon}(t,x)\geq\partial_{xx}\phi(t,x).$$
		 Combining these with \eqref{eq:z_varep}, we have
			\begin{equation}
				\partial_t \phi_\epsilon \geq\frac{\partial_x(a(\psi_\epsilon)\partial_{x} \phi_\epsilon)}{\epsilon(\psi_{\epsilon}^{\prime})^2} +\frac{a(\psi_\epsilon)}{(\epsilon\psi_{\epsilon}^{\prime})^2}(\partial_{x} \phi_\epsilon)^2 +\left(\frac{b(\psi_\epsilon)}{\epsilon\psi_{\epsilon}^{\prime}}-\frac{a(\psi_{\epsilon})\psi_{\epsilon}^{\prime\prime}}{\epsilon(\psi_{\epsilon}^{\prime})^3}\right)\partial_{x} \phi_\epsilon+\frac{f(\psi_\epsilon,u_\epsilon)}{u_\epsilon}
		\end{equation}
		at $(t_\epsilon,x_\epsilon)$. Using (iii) and (iv) of Proposition \ref{prop:psi} and setting $\epsilon\to0$, we have
		$$\limsup\limits_{\epsilon\to0}\frac{f(\psi_\epsilon(x_\ep),u_\epsilon(t_\ep,x_\ep))}{u_\epsilon(t_\ep,x_\ep)}\leq0,$$ which yields that
	$$\liminf\limits_{\epsilon\to0}Cu^\alpha_\epsilon(t_\ep,x_\ep)\geq\liminf\limits_{\epsilon\to0}\partial_sf(\psi_\epsilon(x_\ep),0)>0$$
	by \eqref{eq:f is c alpha}.
	Furthermore, by the definition of $(t_{\epsilon},x_{\epsilon})$, we conclude
		$$Z_{\epsilon}(t_{0},x_{0})=Z_{\epsilon}(t_{0},x_{0})-\phi(t_{0},x_{0})\geq Z_{\epsilon}(t_{\epsilon},x_{\epsilon})-\phi(t_{\epsilon},x_{\epsilon})\geq Z_{\epsilon}(t_{\epsilon},x_{\epsilon}).$$
		Thus
	    \begin{equation*}
			\begin{split}
				\liminf\limits_{\epsilon\to0}u_{\epsilon}(t_{0},x_{0})
				&\geq\liminf\limits_{\epsilon\to0}\exp\{Z_\ep(t_0,x_0)/\ep\}\geq\liminf\limits_{\epsilon\to0}\exp\{Z_\ep(t_\ep,x_\ep)/\ep\}\\
				& \geq		\liminf\limits_{\epsilon\to0}u_{\epsilon}(t_{\epsilon},x_{\epsilon})>0.
			\end{split}
		\end{equation*}
		
		Proof of (ii): Suppose that the statement fails. Then there exists $(\ep_n,t_n,x_n)$ with  $\ep_n\to0$ and 
		$$(t_n,x_n)\to(t_0,x_0)\in \overline{B}\subset\{(t,x)|Z(t,x)<0\}$$ for some ball $B$ such that  $\liminf\limits_{n\to\infty}u_{\epsilon_n}(t_n,x_n)\in(0,1]$.
		Therefore, $Z(t_0,x_0)=\lim\limits_{n\to\infty}Z_{\ep_n}(t_n,x_n)=\lim\limits_{n\to\infty}\ep_n\ln u_{\ep_n}(t_n,x_n)=0$, which is impossible since $(t_0,x_0)\in\{(t,x)|Z(t,x)<0\}$.
			\end{proof}
	With the above  lemma at hand, we have
		\begin{theorem}\label{the:1 simple case}
		Let (\textbf{A1})-(\textbf{A2}) and (\textbf{F1})-(\textbf{F3}) hold. 
		Assume that the initial datum $u_0(x)\in\mathcal U_{p_0^-,p_0^+}$, where $u_0=e^{-h(x)}$ with $h$ satisfying \eqref{eq:intial data} and \eqref{eq:h}.
		Then  
		\begin{equation*}
			\begin{cases}
				{\lim_{t\rightarrow+\infty}}\sup \limits_{x\geq\omega t}|u(t,x)|=0, \text{ for any }\omega>\omega^+,\\
				\displaystyle{\lim_{t\rightarrow\infty}}\sup \limits_{0\leq x\leq \omega t}|u(t,x)-1|=0, \text{ for any }\omega<\omega^+,
			\end{cases}
		\end{equation*}
		where $\omega^+$ is given by \eqref{eq:the speed}.
		Furthermore,  \eqref{eq:ss} and \eqref{eq:where is level set} 	hold.
	\end{theorem}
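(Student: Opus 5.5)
We translate the statements about $u(t,x)$ for large $t$ into statements about $u_\epsilon(1,\cdot)$ at the fixed rescaled time $1$, with $\epsilon=1/t$. Then we read off the answer from the explicit formula for $Z$ in Theorem \ref{thm:the sol}, combined with Lemma \ref{lem:er fen} and Corollary \ref{cor:locally uniformly}. Setting $\epsilon=1/t$, we have $u(t,y)=u_\epsilon(1,\psi_\epsilon^{-1}(y))$.

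\textbf{Case $p_0^+>0$.} Here $\psi_\epsilon^{-1}(y)=h^{-1}(\epsilon(h(y)-h(0))+h(0))$. Since $h(y)\sim p_0^+y$ and $h(l(x))=p_0^+x+h(0)$, the point $y=\omega t$ is sent to $x_\epsilon$ with $x_\epsilon\to l(\omega)$ as $\epsilon\to0$. Uniformly for $\omega$ in compacts this follows from $h(y)/y\to p_0^+$. By Theorem \ref{thm:the sol}, in both sub-cases (i) and (ii) the set $\{\zeta(1,\cdot)=0\}$ is exactly $(0,\omega^+]$, and $\zeta(1,x)<0$ for $x>\omega^+$.

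\emph{The region $x\geq\omega t$ with $\omega>\omega^+$.} The region $y\ge\omega t$ corresponds to $x\ge l(\omega)-o(1)$. On compact subsets, Lemma \ref{lem:er fen}(ii) gives $u_\epsilon\to0$. For the unbounded tail we use the bound $u\le\overline u$ from Lemma \ref{lem:well defined and initial value}. This gives $Z_\epsilon(1,x)\le \epsilon\ln\overline c+h(0)-h(x)+\overline c$, which is very negative for $x$ large uniformly in $\epsilon$, so $u_\epsilon\to0$ there too.

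\emph{The region $0\le x\le\omega t$ with $\omega<\omega^+$.} Lemma \ref{lem:er fen}(i) applies on compact subsets of the interior of the zero set; note that $\{(t,x): Z=0\}$ has nonempty interior near $t=1$, $x\in(0,\omega^+)$. This gives $\liminf u_\epsilon(1,x)\ge\eta>0$ uniformly on $[\delta',l(\omega')]$. Near $x=0$ we use Step 1 of Theorem \ref{thm:homo- eq} (the remark: $u\to1$ on $|x|\le c_0t$). The resulting positive lower bound on $[0,\omega t]$ must then be upgraded to convergence to $1$. For this we use the standard argument: given any sequence $t_n\to\infty$ and $y_n\in[0,\omega t_n]$, choose $\omega<\omega'<\omega^+$, so that $u\ge\eta$ on $[0,\omega' t]$ for all large $t$. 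Then the shifted functions $u(t_n+\cdot,y_n+\cdot)$ converge along subsequences, by parabolic estimates and almost periodicity of the coefficients, to an entire solution of a limit equation in the hull with $\inf u\ge\eta$ on $\mathbb R\times\mathbb R$ locally. By (A2) and \cite[Theorem 1.2]{liang2021propagation}, such a solution must be identically $1$: compare with solutions starting from $\eta$ times a compactly supported bump placed at earlier times. This contradicts $|u-1|\ge\epsilon_0$ along the sequence.

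\textbf{Case $p_0^+=0$.} By Theorem \ref{thm:the sol}(iii), $Z(1,x)=\min\{h(0)-h(x)+\lambda(0),0\}$. Take $y=h^{-1}(\omega t)$. Then $h(\psi_\epsilon^{-1}(y))=\epsilon\omega t+(1-\epsilon)h(0)\to\omega+h(0)$, so $x_\epsilon\to h^{-1}(\omega+h(0))$. This point lies in $\{Z(1,\cdot)<0\}$ exactly when $\omega>\lambda(0)$, and in the interior of $\{Z=0\}$ exactly when $\omega<\lambda(0)$. The two limits in \eqref{eq:ss} then follow exactly as above: the tail is handled by monotone transport of the compact estimate through $\psi_\epsilon$ together with the $\overline u$ bound, and the convergence to $1$ by the same Liouville argument. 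Finally, \eqref{eq:where is level set} is immediate: for $t$ large, $u>\theta$ to the left of the inner front and $u<\theta$ to the right of the outer front.

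\textbf{Main obstacle.} The hardest step is the passage from $\liminf u>0$ to $u\to1$ uniformly on sets growing linearly (or like $h^{-1}(\omega t)$). This requires the almost-periodic Liouville-type result, namely uniform convergence to $1$ for solutions that are uniformly positive on large sets. Secondary care is needed for uniformity near $x=0$ and for the unbounded tail, where Lemma \ref{lem:er fen} only gives locally uniform control.
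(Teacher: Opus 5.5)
Your proposal is correct and follows essentially the same route as the paper. Both read off the zero set and the negative set of the explicit Hamilton--Jacobi solution of Theorem \ref{thm:the sol} at rescaled time $1$, then use Lemma \ref{lem:er fen} together with the supersolution $\overline u$ for the tail and the near-$x=0$ estimate from Step 1 of Theorem \ref{thm:homo- eq}, and finally upgrade uniform positivity to convergence to $1$. The only difference is that you sketch this last upgrade via a hull-compactness/Liouville argument, which tacitly requires $f(\cdot,s)$ to be almost periodic in $x$, as the cited results also do; the paper instead simply invokes \cite{berestycki2012spreading} and Step 3 of the proof in \cite{liang2021propagation}.
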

	\begin{proof}
		The proof is divided into three steps:
		
		 \textbf{Step 1:} Consider the spreading speed when   $p^+_0>0$.  
		 We only prove the case $p^+_0>p_+$; the case $0< p^+_0\leq p_+$ can be proved similarly.
		 For any $\omega\in\left(0,\frac{\lambda(-p_+)}{p_+}\right)$, we take $\kappa\in(0,\min\{\frac{\omega}{2}, \frac{\lambda(-p_+)}{2p_+}-\frac{\omega}{2}\})$ small.
		Since $\lim\limits_{x\to\infty}\frac{h(x)}{x}=p^+_0>0$,  then there exists $\epsilon_0$ small such that 
		$$\frac{p^+_0(\omega-\kappa/2)}{\epsilon}<h\left(\frac{\omega }{\epsilon}\right)<\frac{p^+_0(\omega+\kappa)}{\epsilon}, \ \forall \epsilon\leq \epsilon_0.$$
		Smaller $\epsilon_0$ if necessary, we have
		$$\frac{p^+_0(\omega-\kappa)}{\epsilon}+h(0)<h\left(\frac{\omega }{\epsilon}\right)<\frac{p^+_0(\omega+\kappa)}{\epsilon}+h(0),  \ \forall \epsilon\leq \epsilon_0,$$
		which yields that 
		\begin{equation}\label{eq:in the cone}
			\frac{\omega }{\epsilon}\in\left\{h^{-1}\left(\frac{p^+_0\tilde\omega t}{\epsilon}+h(0) \right)| \tilde\omega\in[\omega-\kappa,\omega+\kappa]\right\},\ \ \forall \epsilon\leq \epsilon_0.
		\end{equation}
        Note that $[\omega-\kappa,\omega+\kappa]\in(0,\frac{\lambda(-p_+)}{p_+})$. 
        Then         
        $$S:=\{(1,\tilde\omega )|\tilde\omega\in[\omega-\kappa,\omega+\kappa]\}\subset\text{int}\{(\tau,\xi)|\zeta(\tau,\xi)=0\}, $$
        by (i) of Theorem \ref{thm:the sol},
        which yields that
        $\{(1,l(\tilde\omega))|\tilde\omega\in[\omega-\kappa,\omega+\kappa]\}\subset\text{int}\{(\tau,\xi)|Z(\tau,\xi)=0\}$. 
        Then
        \begin{equation*}\label{}
        	\begin{split}
        	&\ \quad\liminf\limits_{\epsilon\to0}\inf\limits_{\tilde\omega\in[\omega-\kappa,\omega+\kappa]}u\left(\frac{1}{\epsilon},h^{-1}\left(\frac{p^+_0\tilde\omega }{\epsilon}+h(0) \right)\right)\\
        	&=\liminf\limits_{\epsilon\to0}\inf\limits_{\tilde\omega\in[\omega-\kappa,\omega+\kappa]}u\left(\frac{1}{\epsilon},h^{-1}\left(\frac{h(l(\tilde\omega) )-h(0)}{\epsilon}+h(0) \right)\right)\\
        	&=\liminf\limits_{\epsilon\to0}\inf\limits_{\tilde\omega\in[\omega-\kappa,\omega+\kappa]}u_\epsilon\left(1,l(\tilde\omega ) \right)>0
        	\end{split}
        \end{equation*}
       by (i) of Lemma \ref{lem:er fen}. We thus have
       \begin{equation}\label{eq:(1,w)positive}
       		\liminf\limits_{\epsilon\to0}u\left(\frac{1}{\epsilon},\frac{\omega}{\epsilon}\right)
       		\geq\liminf\limits_{\epsilon\to0}\inf\limits_{\tilde\omega\in[\omega-\kappa,\omega+\kappa]}u\left(\frac{1}{\epsilon},h^{-1}\left(\frac{p^+_0\tilde\omega }{\epsilon}+h(0) \right)\right)>0,
       \end{equation}
		where the first inequality follows from \eqref{eq:in the cone}. Note also that the solution $u = u(t,x)$ of \eqref{eq:main eq} converges to $1$ as $t\to+\infty$ locally in $x\in\mathbb R$. Combining this with \eqref{eq:(1,w)positive}, we deduce, by similar arguments to \cite[Proof of part 2 of Theorem 2.1]{berestycki2012spreading} (see also \cite[Theorems 1.3 and 1.6]{berestycki2008Asymptotic}), that 
		$$\lim\limits_{t \to +\infty}\sup\limits_{0 \leq x < \omega t}|u(t,x) - 1|  = 0\text{ for any }\omega\in(0,\frac{\lambda(-p_+)}{p_+}).$$
		The statement
		$$\lim\limits_{t \to +\infty}		\sup\limits_{x \geq \omega t}		|u(t,x)|  = 0 \text{ for any }\omega>\frac{\lambda(-p_+)}{p_+}$$
		 can be proved by using (ii) of Lemma \ref{lem:er fen} or by constructing a supersolution as was done in \cite[Proof of part 1 of Theorem 2.1]{berestycki2012spreading}. Therefore, the spreading speed in the positive direction $\omega^+=\frac{\lambda(-p_+)}{p_+}$  if $p^+_0>p_+$.
		 
		 When  $0< p^+_0\leq p_+$, all the arguments above are still valid with $=\frac{\lambda(-p_+)}{p_+}$ replaced by $=\frac{\lambda(-p_0^+)}{p^+_0}$.
		 Therefore, the spreading speed in the positive direction $\omega^+=\frac{\lambda(-p_0^+)}{p_0^+}$  if $0<p_0^+\leq p_+$.
		 
	 Consequently, the first two expressions in \eqref{eq:the speed} together with the first expression in \eqref{eq:where is level set}  are valid.
		 
		  \textbf{Step 2:} Consider the spreading speed when   $p^+_0=0$. Note that $\omega^+=+\infty$ for $p^+_0=0$ follows directly from the conclusion of case  $0< p^+_0\leq p_+$ by comparison principle. Hence the last expression in \eqref{eq:the speed} holds. It follows from (iii) of Theorem \ref{thm:the sol} that
		 \begin{equation}\label{eq:Z=0}
		 	\{(t,h^{-1}(\omega t))| \omega\in[\omega_1,\omega_2], t>0\}\subset\text{int}\{(\tau,\xi)| Z(\tau,\xi)=0\},		\text{ where }  0<\omega_1<\omega_2<\lambda(0)
		 \end{equation}  	
		 \begin{equation}\label{eq:Z<0}
		 		\left\{(t,h^{-1}(\omega t))| \omega\in[\omega_3,\omega_4], t>\frac{h(0)}{\omega_3-\lambda(0)}\right\}\subset\{(\tau,\xi)| Z(\tau,\xi)<0\},		\text{ where }  \lambda(0)<\omega_3<\omega_4.
		 \end{equation}
		 Take $\omega<\lambda(0)$ and set $\kappa\leq\min\{\frac{\omega}{2}, \frac{\lambda(0)-\omega}{2}\}$.  	
		 Then one can easily  check that
		 \begin{equation*}
		 	\frac{(\omega-\kappa)t-h(0)}{\epsilon}+h(0)\leq\frac{\omega t}{\epsilon}\leq\frac{(\omega+\kappa)t-h(0)}{\epsilon}+h(0),\ \forall t\geq t_0:=\frac{h(0)}{\kappa}\text{ and } \epsilon\in(0,1),
		 \end{equation*}
		 which yields that $h^{-1}\left(\frac{\omega t}{\epsilon}\right)\in\left\{h^{-1}\left(\frac{\tilde \omega t-h(0)}{\epsilon}+h(0)\right)|\tilde \omega\in[\omega-\kappa,\omega+\kappa]\right\}$ for $t\geq t_0$.
		 Therefore, we have
		 \begin{equation}\label{eq:liminf>0}
		 \begin{split}
		 	&\quad\liminf\limits_{t\to\infty}u(t,h^{-1}(\omega t))
		 	  =\liminf\limits_{\epsilon\to0}u\left(\frac{t_0}{\epsilon},h^{-1}\left(\frac{\omega t_0}{\epsilon}\right)\right)\\
		 	&\geq\liminf\limits_{\epsilon\to0}\inf\limits_{\tilde\omega\in[\omega-\kappa,\omega+\kappa]}u\left(\frac{t_0}{\epsilon},h^{-1}\left(\frac{\tilde \omega t_0-h(0)}{\epsilon}+h(0)\right)\right)\\
		 	&=\liminf\limits_{\epsilon\to0}\inf\limits_{\tilde\omega\in[\omega-\kappa,\omega+\kappa]}u_\epsilon(t,h^{-1}(\tilde\omega t))>0.
		 \end{split}
		 \end{equation}  	
		 The last inequality follows from  Lemma \ref{lem:er fen} since $(t,h^{-1}(\tilde\omega t))\in\text{int}\{Z=0\}$ by \eqref{eq:Z=0}.
		  With \eqref{eq:liminf>0} at hand, we can prove as Step 3 in the proof of \cite[Theorem 1.2]{liang2021propagation} to deduce that 
		 $$ \lim\limits_{t \to +\infty}	 \sup\limits_{0 \leq x \leq h^{-1}(\omega t)}	 |u(t,x) - 1|  = 0, \text{ for all }\omega \in (0,\lambda(0)).$$
		 By similar arguments as above, we have
		  \begin{equation}\label{eq:limsup=0}
		 	 \limsup\limits_{t\to\infty}u(t,h^{-1}(\omega t))=0 \text{ for any }\omega>\lambda(0).
		 \end{equation}  	
		  On the other hand, taking the supersolution $\overline{u}(t,x)$ given in Lemma \ref{lem:well defined and initial value}, we have
		 $$\limsup\limits_{t\to\infty}\sup\limits_{\omega\in[\overline{c}+1,+\infty)}u(t,h^{-1}(\omega t))=0.$$
		 Combining this with \eqref{eq:limsup=0}, we have
		 $$ \lim\limits_{t \to +\infty}	 \sup\limits_{x \geq h^{-1}(\omega t)}	u(t,x) = 0, \text{ for all }\omega \in(\lambda(0),+\infty),$$
		 and thus \eqref{eq:ss} holds. 
		 Furthermore, the second expression in \eqref{eq:where is level set} can be obtained immediately.
	\end{proof}

	\subsection{Proof of the main results and examples}

In this section, we will first prove Theorems \ref{the:1} and \ref{the:2}, and then give some examples.
\begin{proof}[Proof of Theorem \ref{the:1}]
      Since $u_0\in\mathcal U_{p_0^-,p_0^+}$, it is known that (see the statements after (\textbf{A2})) 
      \begin{equation}\label{eq:local unif to 1}
      	\lim\limits_{t\to+\infty}\sup\limits_{|x|\leq R_0}u(t,x)=1.
      \end{equation}
      Hence we may consider the behavior of $u$ on $\{(t,x)| t>0, x\in[R_0,+\infty)\}$, or consider $v(t,x):=u(t,x+R_0)$ on $\{(t,x)| t>0, x\in[0,+\infty)\}$.
      That is, $v$ satisfies
      \begin{equation*}
      	\begin{cases}
      		v_t = \partial_x(a(x+R_0)\partial_{x}v) + b(x+R_0)\partial_x v+  f(x+R_0,v), \ &x \in \RR,\  t > 0,\\
      		v(0,x) = u_0(x+R_0)=e^{-\tilde h(x)}, \ &x \in \RR,
      	\end{cases}
      \end{equation*}
      where $\tilde h(x)=h(x+R_0)$ satisfies \eqref{eq:intial data} and \eqref{eq:h}.
      Note that the generalized principal eigenvalue $\lambda(p)$ of $L_p$ remains unchanged under translation of its coefficients.
      Hence, applying  Theorem \ref{the:1 simple case} to $v$, one has
      \begin{equation}\label{eq:ss of v}
      	\begin{cases}
      		\displaystyle{\lim_{t\rightarrow+\infty}}\sup \limits_{x\geq\omega t}|u(t,x+R_0)|=\displaystyle{\lim_{t\rightarrow+\infty}}\sup \limits_{x\geq\omega t}|v(t,x)|=0, \text{ for any }\omega>\omega^+,\\
      		\displaystyle{\lim_{t\rightarrow\infty}}\sup \limits_{0\leq x\leq \omega t}|u(t,x+R_0)-1|=\displaystyle{\lim_{t\rightarrow\infty}}\sup \limits_{0\leq x\leq \omega t}|v(t,x)-1|=0, \text{ for any }\omega<\omega^+,
      	\end{cases}
      \end{equation}
      where $\omega^+$ is still given by \eqref{eq:the speed}.
      Besides, \eqref{eq:ss} and \eqref{eq:where is level set} hold for $v$.
      Thus, Theorem \ref{the:1} follows immediately.
     \end{proof}
     
     \begin{proof}[Proof of Theorem \ref{the:2}]
     	By comparison principle, we only need to consider the case $u_0=e^{-h}$.
     	Let $\phi_1$ be eigenfunction as \eqref{eq:ef on 01}  on $[\frac{\delta}{2},2\delta]$.
     	Then it is easy to check that $u_1=\phi_1$ if $x\in[\frac{\delta}{2},2\delta]$, $=0$ otherwise is a subsolution. 
     	Therefore, $\liminf\limits_{t\to+\infty}u(t,\delta)>0$. 
     	On the other hand, similar to the homogenization method used earlier on $x >R_0$, we have
     	$\liminf\limits_{t\to+\infty}u\left(t,\omega t\right)>0$ for any $\omega\in(0,\omega^+)$  if $p^+_0>0$, and 
     	$\liminf\limits_{t\to+\infty}u\left(t,h^{-1}(\omega t)\right)>0$ for any $\omega\in(0,\lambda(0))$ if $p^+_0=0$.
     	Hence, the conclusions of the theorem can be obtained similarly to Theorem \ref{the:1 simple case}.
     \end{proof}
     
      \begin{proof}[Proof of Theorem \ref{thm:front like initial data}]
     	The comparison principle yields that $u\in[0,1]$.
     	Then all the conclusions of Theorem \ref{the:2} hold.
     	Below we show that $\delta$ can be taken as $-\infty$.  
     
     	Note that  the solution $u = u(t,x)$ converges to $1$ as $t\to+\infty$ locally in $x\in\mathbb R$, and $\inf\limits_{x\leq0}u_0(x)>0$. 
     	Then we set $m:=\min\{\inf\limits_{t\geq0}u(t,0),\inf\limits_{x\leq0}u_0(x)\}>0$ and $\underline{f}(u)=\frac{1}{2}\inf\limits_{x\in\RR}\partial_uf(x,0)u(\epsilon-u)$ with $\epsilon<m$ small such that $\underline{f}(u)\leq f(x,u)$ for any $(x,u)\in\RR\times[0,1].$
     	Then let $\underline{u}$ be the solution to 
     	$$\frac{d\underline{u}}{dt}=\underline{f}(\underline{u}), \text{ and }\underline{u}(0)=m/2.$$
     	Then by comparison principle on $\{t>0, x\leq0\}$, we have $u(t,x)\geq\underline u(t)$,
     	which yields that $\liminf\limits_{t\to+\infty}\inf\limits_{x\leq0}u(t,x)>0$.
     	Hence $\lim\limits_{t \to +\infty}	 \inf\limits_{ x\leq \omega t}	 u(t,x)>0  \text{ for any } \omega \in (0,\omega^+).$
     	Then by an argument similar to the proof of Theorem \ref{the:1 simple case}, we obtain $\lim\limits_{t \to +\infty}	 \sup\limits_{ x\leq \omega t}	| u(t,x)-1|=0  \text{ for any } \omega \in (0,\omega^+)$.	
     \end{proof}
     
     \begin{proof}[Proof of Corollary \ref{cor:one side}]
     	First of all, it follows from \cite[Section 3.6]{PW_MP} that $u>0$ for $t>0$ and $x>0$.
     	Then the conclusions follow from  Theorem \ref{the:2}.     	
     \end{proof}

 \begin{proof}[Proof of Theorem \ref{thm:general initial data}]
	 Since $u_0\le e^{-g}$, the upper bound for $E^+_\theta(t)$ follows directly from the comparison principle.
	 Below we prove that for some $x_0>0$ and $c_0\in\mathbb R$,
	 \begin{equation}\label{eq:low bound at t=1}
	 		 u(1,x) \ge e^{-h(x+x_0)+c_0}
	 \end{equation}
	 for all sufficiently large $x$. 
	Note that $ h(\cdot+x_0)-c_0 \text{ satisfies }	\eqref{eq:ass of h}$ for $x\geq R_0$ (enlarging $R_0$ if necessary)  and \eqref{eq:intial data} $\text { as }x\to+\infty 
	.$
	 Once this is established, taking $t=1$ as the initial time,  the comparison principle again yields the lower bound estimate for $E^+_\theta(t)$.
	 
	 Without loss of generality, assume that the intervals $I_n^+$ are ordered so that
	$ \sup I_n^+ < \inf I_{n+1}^+,$
	 and let $y_n$ denote the midpoint of $I_n^+$. Since $\{I_n^+\}$ is relatively dense in $\mathbb R_+$, for any $\xi \ge R_0 + L$, there exists some $n(\xi)$ such that
	 \begin{equation}\label{eq:the int}
	 I_{\xi,\kappa}:=[y_{n(\xi)}-\kappa,y_{n(\xi)}+\kappa]\subset\subset I_{n(\xi)}^+ \subset \left( \xi - \frac{L}{2},\, \xi + \frac{L}{2} \right),
	 \end{equation}
	 where $\kappa, L$ are the constants in \eqref{eq:interval_conditions} and \eqref{eq:relatively_dense_plus}, respectively.
	If there are several of such intervals; we simply fix one of them.
	 We consider 
	 \begin{equation}\label{eq:low Dirichlet}
	 	\begin{cases}
	 	v_{t}=\partial_x(a^\xi\partial_xv)+b^\xi\partial_xv, \ &t\in(0,1],\ x\in(-L,L),\\
	 	v(t,x)=0,\ &t\in(0,1],\ x\in\{\pm L\},\\
	 	v(0,x)=v^\xi_0(x), \  &x\in[-L,L],\\
	 \end{cases}
	 \end{equation}
	where $a^\xi=a(\cdot+\xi)$ and $b^\xi=b(\cdot+\xi)$, and $v^\xi_0$ is continuous and satisfies
	\[\mathds{1}_{I_{\xi,\kappa}-\xi}(x)e^{-h(x+\xi)}\leq v^\xi_0(x)\leq \mathds{1}_{I^+_{n(\xi)}-\xi}(x)e^{-h(x+\xi)}, \ x\in[-L,L],\]
	here the set $A-\xi:=\{x-\xi\mid x\in A\}$.
	Let $G^\xi(t,x,y)$ denote the Dirichlet heat kernel associated with the operator in \eqref{eq:low Dirichlet}. 
	Then $ v^\xi(t,x)=\int_{(-L,L)}G^\xi(t,x,y)v^\xi_0(y)dy$, and for all $x,y\in(-L,L)$ and $t>0$, by the lower bound estimate (see, e.g., \cite[Theorem 1.1]{cho2012two-side estimate}), we have
	\[
	G^\xi(t,x,y) \ge \frac{c_1}{\sqrt t}
	\min\left\{1,\frac{\rho(x)}{\sqrt t}\right\}
	\min\left\{1,\frac{\rho(y)}{\sqrt t}\right\}
	\exp\left(-\frac{c_2 (x-y)^2}{t}\right),
	\]
	where $c_1,c_2>0$ are constants independent of $\xi$, and
	\[
	\rho(x) := \operatorname{dist}(x,\partial(-L,L)) = \min\{|x-L|,\, |x+L|\}.
	\]
	Hence 
	 \begin{equation}\label{}
		\begin{split}
			&\quad v^\xi(1,0)=\int_{(-L,L)}G^\xi(1,0,y)v^\xi_0(y)dy\geq\int_{I_{\xi,\kappa}-\xi}G^\xi(1,0,y)dy\inf\limits_{y\in I_{\xi,\kappa}-\xi}v^\xi_0(y)\\
			&\geq C_0\inf\limits_{y\in I_{\xi,\kappa}-\xi}e^{-h(y+\xi)}\geq C_0\inf\limits_{y\in[-L/2,L/2]}e^{-h(y+\xi)}\geq C_0e^{-h(\xi+L/2)}.
		\end{split}
	\end{equation}  	
	Here $C_0>0$  depending only on $L$ and $\kappa$, but not on $\xi$. The penultimate inequality follows from \eqref{eq:the int}, and the last inequality is a consequence of the monotonicity of $h$ on $[R_0,\infty)$.
	Observe that $v^\xi(t,x)$ is a subsolution of $u(t,x+\xi)$ on $(0,1]\times[-L,L]$. Therefore, by the comparison principle,
	\[
	u(1,\xi) \ge v^\xi(1,0) \ge C_0 e^{-h(\xi + L/2)} \text{ for all }\xi \ge R_0+L.
	\]
	Hence \eqref{eq:low bound at t=1} holds with $x_0=L/2$ and $c_0=\ln C_0$.	    	
\end{proof}
     
        Next, we give   four types of typical examples for the case where $p^+_0=0$, which were considered by Hamel and Roques \cite{hamel2010fast}.
         We always assume that the initial data \(u_0 \) satisfies the condition in Theorem \ref{thm:front like initial data}.
        Then $$\min E_\theta(t) \sim \max E_\theta(t) \sim h^{-1}(\lambda(0)t) \text{ as } t \to +\infty,$$ or, in other words,
        $h(\min E_\theta(t)) \sim h(\max E_\theta(t)) \sim (\lambda(0)t) \text{ as } t \to +\infty.$
        
         If \[u_0(x) \sim C (\ln x)^{-\alpha} \quad \text{as } x \to +\infty\] for some $\alpha, C>0$, then
        $\ln(\ln(\min E_\theta(t))) \sim \ln(\ln(\max E_\theta(t))) \sim \frac{\lambda(0)t}{\alpha} \text{ as } t \to +\infty.$
       
         If \[u_0(x) \sim Cx^{-\alpha} \quad \text{as } x \to +\infty\] for some $\alpha, C>0$, then   $\min E_\theta(t) \sim \max E_\theta(t) \sim e^{\frac{\lambda(0)t}{\alpha}} \text{ as } t \to +\infty.$
       
       If \[u_0(x) \sim C e^{-\beta x^\alpha} \quad \text{as } x \to +\infty\] for some $\alpha\in(0,1)$ and $\beta, C>0$, then 
        $\min E_\theta(t) \sim \max E_\theta(t) \sim \lambda(0)^{1/\alpha} \left(\frac{t}{\beta}\right)^{1/\alpha} \text{ as } t \to +\infty.$

      If  
            \[u_0(x) \sim Ce^{-\alpha x / \ln x} \quad \text{as } x \to +\infty\]
      for some $\alpha, C>0$, then $\frac{\alpha\min E_\theta(t)}{\ln\min E_\theta(t)} \sim \frac{\alpha\max E_\theta(t)}{\ln\max E_\theta(t)} \sim \lambda(0)t \text{ as } t \to +\infty$.
      Hence $$\min E_\theta(t) \sim \max E_\theta(t) \sim \frac{\lambda(0)t\ln t}{\alpha}  \text{ as } t \to +\infty,$$ which is consistent with the result in the homogeneous case \cite{hamel2010fast}.

       An interesting question is whether, for an initial condition with sub-exponential decay, the expansion speed of its level set exhibits lower-order corrections analogous to those in Bramson's results—and if so, how to determine them. 
       In the context of linear propagation speeds, it is known that such corrections are of order $\ln t$ (\cite{Alfaro2025MathAnn,bramson83MAMS,hamel2013NHM,hamel2016JEMS,zhang2025arxiv}). 
       However, the four examples presented above indicate that any potential lower-order corrections, if they exist, are highly intricate and inevitably vary depending on the specific type of sub-exponential decay of the initial data. We leave this issue for future investigation.

	\clearpage
\end{document}